\documentclass[11pt,reqno]{amsart}
\usepackage{amsthm,amssymb,amsmath}
\usepackage{xcolor}
\usepackage[foot]{amsaddr}

\textwidth 172mm
\oddsidemargin -4mm
\evensidemargin -4mm
\textheight 220mm
\topmargin 0mm

\newtheorem{theorem}{Theorem}
\newtheorem*{theorem*}{Theorem}
\newtheorem{lemma}{Lemma}
\newtheorem{corollary}{Corollary}

\theoremstyle{definition}

\newtheorem{definition}{\sc Definition}
\newtheorem*{definition*}{\sc Definition}

\newtheorem*{remark*}{\sc Remark}

\newtheorem*{remarks*}{\sc Remarks}

\newtheorem*{example*}{\sc Example}




\newcommand{\loc}{{\rm loc}}

\newcommand{\Real}{{\rm Re}}
\newcommand{\Imag}{{\rm Im}}

\newcommand{\clos}{{\rm clos}}

\newcommand{\LL}{\text{\it \L}}


\begin{document}

\title{Regularity of solutions to Kolmogorov equation with Gilbarg-Serrin matrix}

\author{D.\,Kinzebulatov}

\address{Universit\'{e} Laval, D\'{e}partement de math\'{e}matiques et de statistique, 1045 av.\,de la M\'{e}decine, Qu\'{e}bec, QC, G1V 0A6, Canada}

\email{damir.kinzebulatov@mat.ulaval.ca}

\author{Yu.\,A.\,Sem\"{e}nov}

\address{University of Toronto, Department of Mathematics, 40 St.\,George Str, Toronto, ON, M5S 2E4, Canada}

\email{semenov.yu.a@gmail.com}

\begin{abstract}
 In $\mathbb R^d$, $d \geq 3$, consider the divergence and the non-divergence form operators 
\begin{equation}
\label{div0}
\tag{$i$}
-\Delta - \nabla \cdot (a-I) \cdot \nabla + b \cdot \nabla, 
\end{equation}
\begin{equation}
\label{nondiv0}
\tag{$ii$}
 - \Delta - (a-I) \cdot \nabla^2 + b \cdot \nabla,
\end{equation}
where the second order perturbations are given by the matrix $$a-I=c|x|^{-2}x \otimes  x, \quad c>-1.$$ The vector field $b:\mathbb R^d \rightarrow \mathbb R^d$ is form-bounded with the form-bound $\delta>0$ (this includes $[L^d + L^\infty]^d$, as well as vector fields having critical-order singularities).
We characterize quantitative dependence on $c$ and $\delta$ of the $L^q \rightarrow W^{1,qd/(d-2)}$ regularity of the resolvents of the operator realizations of \eqref{div0}, \eqref{nondiv0}  in $L^q$, $q \geq 2 \vee ( d-2)$ as (minus) generators of positivity preserving $L^\infty$ contraction $C_0$ semigroups.

\end{abstract}

\keywords{Elliptic operators, form-bounded vector fields, regularity of solutions, Feller semigroups}

\thanks{D.K.\,is supported by the Natural Sciences and Engineering Research Council of Canada and the Fonds de recherche du Qu\'{e}bec -- Nature et technologies}

\subjclass[2010]{35D10, 47B44 (primary), 60J60 (secondary)}

\maketitle

In this paper we are concerned with the second-order perturbations of $-\Delta$,
\begin{equation}
\label{div_}
\begin{array}{c}
-\Delta - \nabla \cdot (a-I) \cdot \nabla, \\
-\Delta - (a-I) \cdot \nabla^2, 
\end{array}
\end{equation}
\begin{equation*}
a_{ij}(x):=\delta_{ij}+c|x|^{-2}x_ix_j, \quad c>-1, c \neq 0.
\end{equation*}
These are model examples of divergence/non-divergence form operators that are not accessible by classical means such as the parametrix \cite{F}, \cite[Ch.\,IV]{LSU}. Although the matrix $a$ is discontinuous at $x=0$, it is uniformly elliptic,
so by the De Giorgi-Nash theory, the solution $u \in W^{1,2}(\mathbb R^d)$ to the  elliptic equation $(\mu -\nabla \cdot a \cdot \nabla) u = f$, $\mu>0$, $f \in L^p\cap L^2$, $p \in ]\frac{d}{2},\infty[$, is in $C^{0,\gamma}$, where the H\"{o}lder continuity exponent $\gamma \in ]0,1[$ depends only on $d$ and $c$. 
The operators \eqref{div_} and their modifications have been studied by many authors  in order to precise 
the relationship between the regularity properties of the solution and the continuity properties of the matrix, see \cite{GS}, \cite{Me}, \cite[Ch.\,1.2]{LU}, \cite{ABT}, \cite{MSS,MSS2}, \cite{OG}, \cite{A} and references therein. In fact, there is a quantitative dependence of the regularity properties of solutions of the corresponding parabolic and elliptic equations on the value of $c$ (see the cited papers, see also the results below). In this sense, the matrix $a$ has a critical-order discontinuity at the origin.

The critical-order perturbations of $-\Delta$ and its generalizations have been the subject of intense study over the past few decades as they reveal otherwise inaccessible aspects of the theory of the unperturbed operator.
For example, consider the operator $-\Delta - V$, $V(x)=\delta \frac{(d-2)^2}{4}|x|^{-2}$, on $\mathbb R^d$, $d \geq 3$. If $0 <\delta < 1$, then the self-adjoint operator realization $H^-$ of $-\Delta - V$ on $L^2 \equiv L^2(\mathbb R^d)$  is defined  as the
generator of a $C_0$ semigroup $e^{-tH^-}=L^2{\mbox-}s{\mbox-}\lim e^{-tH^-(V_n)}$, $V_n=V \wedge n$. 
For $\delta>1$, however, by the celebrated result of \cite{BG} (see also \cite{GZ}), 
$$
\lim_n e^{-tH^-(V_n)}u_0(x)=\infty, \quad t>0, \quad x \in \mathbb R^d, \quad u_0 \geq 0,\; u_0 \not\equiv 0,
$$
i.e.\,all positive solutions blow up instantly at any point. This phenomenon is not observable on $V=\delta V_0$ for any $V_0 \in L^{\frac{d}{2}}$, regardless of how large $\delta>0$ is
(in this sense, the class $L^{\frac{d}{2}}$ does not contain potentials having critical-order singularities). 
The perturbations $\nabla \cdot (a-I) \cdot \nabla$, $(a-I)\cdot\nabla^2$, $a-I=c|x|^{-2}x \otimes x$, of $-\Delta$ can be viewed as the second-order analogues of the critical potential $V(x)=\delta\frac{(d-2)^2}{4}|x|^{-2}$.

Our goal is to determine to what extent adding $\nabla \cdot (a-I) \cdot \nabla$, $(a-I)\cdot\nabla^2$ affects the perturbation-theoretic and the regularity properties of $-\Delta$. Our interest is motivated by applications to diffusion processes, and so we restrict our study to the first-order perturbations.
 The following result concerning the regular case $c=0$ will serve as the point of departure.
Consider in $\mathbb R^d$, $d \geq 3$, the operator
$$
-\Delta + b \cdot \nabla, \quad b:\mathbb R^d \rightarrow \mathbb R^d.
$$
If $|b| \in L^r$, $r>d$, then its fundamental solution admits two-sided Gaussian bounds, and determines a $C_0$ semigroup  on $L^p$ for any $p \in [1,\infty[$. Its generator $\Lambda_p$ is an operator realization of $-\Delta + b \cdot \nabla$ in $L^p$.
For $p \in [1,r]$, $D(\Lambda_p)=(1-\Delta)^{-1}L^p~(=D(-\Delta) \text{ in } L^p)$. However, for $p \in ]r,\infty[$, $D(\Lambda_p)$ no longer coincides with $(1-\Delta)^{-1}L^p$, and there is no direct connection between $\Lambda_p$ and the algebraic sum $-\Delta + b \cdot \nabla$.

The class $|b| \in L^r$, $r>d$, is contained in a much larger class of the form-bounded vector fields:

\begin{definition}
A measurable vector field $b:\mathbb R^d \rightarrow \mathbb R^d$ is said to be form-bounded (write $b \in \mathbf{F}_\delta$, $\delta>0$)
if $|b| \in L^2_{\loc}$ and 
there exist a constant $\lambda=\lambda_\delta>0$ such that 
$$
\||b|(\lambda -\Delta)^{-\frac{1}{2}}\|_{2\rightarrow 2}\leq \sqrt{\delta}.
$$
(here and below, $\|\cdot\|_{p \rightarrow q}:=\|\cdot\|_{L^p \rightarrow L^q}$).
\end{definition}

The model vector field $b(x):=\sqrt{\delta}\frac{d-2}{2}|x|^{-2}x$, $0<\delta<1$, having critical-order singularity at $x=0$ is contained
in $\mathbf{F}_\delta$ by Hardy's inequality.
The class $\mathbf{F}_\delta$ contains $b \in [L^d + L^\infty]^d$, with $\delta>0$ that can be chosen arbitrarily small (by Sobolev's inequality). $\mathbf{F}_\delta$ also contains classes of vector fields having critical-order singularities, such as the weak $L^d$ class (by Strichartz' inequality \cite{KPS}), the Campanato-Morrey class or the Chang-Wilson-Wolff class \cite{CWW}, with $\delta$ depending on the norm of $|b|$ in these classes. It is clear that
$
b_1 \in \mathbf{F}_{\delta_1}$, $b_2 \in \mathbf{F}_{\delta_2}$ $\Rightarrow$ $ b_1+b_2 \in \mathbf{F}_\delta$, $ \sqrt{\delta}:=\sqrt{\delta_1}+\sqrt{\delta_2}.
$
See e.g.\,\cite{KiS} for further details concerning the class $\mathbf{F}_\delta$. The following is our point of departure:
If $\delta<1 \wedge \bigl(\frac{2}{d-2}\bigr)^2$, then for
$$
u=(\mu+\Lambda_q(b))^{-1}f, \quad \mu > \frac{\lambda \delta}{2(q-1)}, \quad f \in L^q, \quad q \in [2 \vee (d-2),\frac{2}{\sqrt{\delta}}[,
$$
where $\Lambda_q(b)$ is an operator realization of $-\Delta + b\cdot\nabla$
as the (minus) generator of a  positivity preserving $L^\infty$ contraction $C_0$ semigroup in $L^q$,
one has
\begin{equation}
\tag{$\ast$}
\label{reg_double_star_}
\begin{array}{c}
\|\nabla u \|_{q} \leq  K_1(\mu-\mu_0)^{-\frac{1}{2}} \|f\|_q, \\[2mm]
\|\nabla u \|_{\frac{qd}{d-2}} \leq  K_2 (\mu-\mu_0)^{\frac{1}{q}-\frac{1}{2}} \|f\|_q.
\end{array}
\end{equation}
where $K_i=K(\mu,\delta,q)$, $i=1,2$, see \cite[Lemma 5]{KS} (see also \cite[Theorems 3.7-3.10]{KiS}).
In particular, if $q>d-2$, then by the Sobolev Embedding Theorem $u \in C^{0,\gamma}$, $\gamma=1-\frac{d-2}{q}$. (Note that the dependence on $\mu$ in \eqref{reg_double_star_} is the same as one would have for $u=(\mu-\Delta)^{-1}f$.)
The second estimate in \eqref{reg_double_star_} and the iteration method $L^p \rightarrow L^\infty$ in \cite{KS} (see also \cite[sect.\,3.6]{KiS}) allow to construct a Feller semigroup associated with $-\Delta + b \cdot \nabla$ on the space $C_\infty=\{f \in C(\mathbb R^d): \lim_{x \rightarrow \infty}f(x)=0\}$ (with the $\sup$-norm).

In Theorem \ref{thm2} below (the \textit{main result})  we show that the perturbation $-\nabla \cdot (a-I) \cdot \nabla$ of $-\Delta$ preserves, under appropriate assumptions on $c$, the properties of $-\Delta$ that allow to establish \eqref{reg_double_star_} for $u=\big(\mu+\Lambda_q(a,b)\big)^{-1}f$, where $\Lambda_q(a,b)$ is an operator realization of
$$
-\Delta - \nabla \cdot (a-I) \cdot \nabla + b \cdot \nabla, \qquad b \in \mathbf{F}_\delta
$$
as the (minus) generator of a  positivity preserving $L^\infty$ contraction $C_0$ semigroup in $L^q$. (To be precise, the latter is constructed as the limit of the semigroups corresponding to smooth approximations of $a$, $b$.) 
The existing results on $-\Delta - \nabla \cdot (a-I) \cdot \nabla + b \cdot \nabla$ provide a detailed regularity theory of this operator for  $b(x)=c|x|^{-2}x$, see \cite{MSS}, see also references therein  (here and below, we restrict our attention to the locally unbounded vector fields). In contrast to these results, our results follow from the a priori estimates for the approximating semigroups, and do not depend on the specific structure of  $b$
 (such as being differentiable or rotationally-symmetric).

Our method admits immediate extension to 
\begin{equation}
\label{a_gen}
a_{ij}(x)=\delta_{ij}+\sum_lc_l\kappa_{ij}(x - x^l), \quad \kappa_{ij}(x)=|x|^{-2}{x_ix_j},
\end{equation}
$$c_+:=\sum_{c_l>0}c_l<\infty ,\quad c_-:=\sum_{c_l<0}c_l >-1,$$
where $\{x^l\}$ is an arbitrary countable subset of $\mathbb R^d$, e.g.\,dense in $\mathbb R^d$. 
Our method does not depend on the geometry of the set of the points of discontinuity of the matrix \eqref{a_gen}.

Set 
$(\nabla a)_k := \sum_{i=1}^d (\partial_{x_i} a_{ik})$, $1 \leq k \leq d$. Then $\nabla a=c(d-1)|x|^{-2}x$ , and so $\nabla a \in \mathbf{F}_\delta$, $\delta=\frac{4c^2(d-1)^2}{(d-2)^2}$. 
The latter allows us to construct an operator realization   of the non-divergence form operator $$-a \cdot \nabla^2 + b \cdot \nabla \equiv -\sum_{i,j=1}^d a_{ij}(x) \partial_{x_i} \partial_{x_j} + \sum_{k=1}^d b_k(x) \partial_{x_k}, \quad b \in \mathbf{F}_{\delta_1}$$ 
in $L^q$ as $\Lambda_q(a,\nabla a + b)$
(formally, $-a \cdot \nabla^2 + b \cdot\nabla \equiv - \nabla \cdot a \cdot \nabla + (\nabla a) \cdot \nabla + b \cdot \nabla$) and then characterize 
quantitative  dependence of the $W^{1,p}$ regularity of $u \equiv (\mu+\Lambda_q(a,\nabla a + b))^{-1}f$, $f \in L^q$, on $c,d,q, \mu$ and $\delta_1$ (Theorems \ref{thm3} and \ref{thm4}). 
The class of 
of admissible first order perturbations of $-a \cdot \nabla^2$, i.e.\,$b \cdot \nabla$, $b \in \mathbf{F}_{\delta_1}$ (see also the previous remark) allows us to conclude that this result can not be achieved on the basis of the Krylov-Safonov a priori estimates \cite[Ch.\,4.2]{Kr}. (We note that the operator $-a\cdot\nabla^2$ with $\partial_{x_k}a_{ij} \in L^{d,\infty}$ has been studied earlier  in \cite{AT}; see also \cite{ABT}.)

The second estimate in \eqref{reg_double_star_}, and a variant of the iteration method in \cite{KS}, allow to construct $L^p$-strong Feller semigroups associated with $-\nabla\cdot a \cdot\nabla + b \cdot \nabla$ and $-\nabla\cdot a \cdot\nabla  + (\nabla a + b) \cdot \nabla$, $b \in \mathbf{F}_\delta$, in $C_\infty$. We plan to address this matter  in another paper.

Concerning the application of \eqref{reg_double_star_} to establishing the $C^{0,\gamma}$ continuity of $u$, we note the following. Let $d \geq 4$. In the proof of Theorem \ref{thm2} we establish a somewhat stronger than \eqref{reg_double_star} higher order derivatives estimate $$\|\nabla |\nabla u|^{\frac{q}{2}}\|_2^2 \leq K \|f\|^q_q.$$
It follows that 
 if $q>d-2$, then $u \in C^{0,\gamma}$, $\gamma=1-\frac{d-2}{q}$. We do not appeal, for the purpose of establishing $u \in C^{0,\gamma}$, to the $\|u\|_{2,r}$ estimates for a large $r$. In fact, it is not clear if such an estimate exists for a general $b \in \mathbf{F}_\delta$, $\delta>0$. (Below we establish the $W^{2,2}$ estimates on $u$, but in order to conclude the H\"{o}lder regularity of $u$ these are only sufficient in the dimension $d=3$.) We note that in \cite{MSS2} the authors construct an operator realization $A_p$ of $-\Delta + \nabla \cdot (a-I)\cdot \nabla + b \cdot \nabla$, $b(x)=\sqrt{\delta}\frac{d-2}{2}|x|^{-2}x$, $0<\delta<4$, in $L^p$, and completely characterize its domain, establishing, in particular, for $u \in D(A_p)$ that
$$\nabla_i\nabla_k u \in L^p, \quad i,k=1,\dots,d \quad \text{ if } p \in ]\frac{2}{2-\frac{d-2}{d}\sqrt{\delta}},\frac{d}{2}[.$$ However, the latter does not allow to conclude that $u \in C^{0,\gamma}$ for any $\gamma>0$. 

In view of the previous remark we note that having a complete characterization of the domain of (an operator realization of) $ -\nabla \cdot a \cdot \nabla$ in $L^q$ for some $q$ is not sufficient on its own in order to characterize regularity of the domain of $ -\nabla \cdot a \cdot \nabla + b \cdot \nabla$, $b \in \mathbf{F}_\delta$, in $L^q$ (as is already apparent in the case $a=I$ discussed above).  
In particular, Theorem \ref{thm2} below is by no means a consequence of Theorem \ref{thm1}.

In Theorems \ref{thm1}-\ref{thm4} below we have tried to find the least restrictive assumptions on $c$ and $\delta$, permitted by the method, such that the estimates \eqref{reg_est000}, \eqref{reg_double_star} hold (we note that our result is not of Cordes-type). The weaker result that there exist sufficiently small $c$ and $\delta$ such that the estimates \eqref{reg_est000}, \eqref{reg_double_star} are valid  (still not accessible by the existing results prior to our work)  can be obtained with considerably less effort by following the proof and discarding the corresponding multiples of $c$ and $\delta$.

The method of this paper is suited to treat classes of second-order perturbations $-\nabla \cdot (a-I)\cdot \nabla$, $-(a-I) \cdot \nabla^2$ of $-\Delta$ more general than  \eqref{a_gen}, for example, given by $a-I=v \otimes v$, where (bounded) $v:\mathbb R^d \rightarrow \mathbb R^d$, $v \in W^{1,2}_{\loc}(\mathbb R^d,\mathbb R^d)$ satisfies $\big(\sum_k (\nabla v_k)^2\big)^{\frac{1}{2}} \in \mathbf{F}_\delta$. We plan to address this matter in another paper.

The arguments in this paper can be transferred without significant changes from $\mathbb R^d$ to the ball $B(0,1)$.

We have included Appendix \ref{appendix_existence} to make the paper self-contained.

\medskip

\textbf{1.~}We now state our results in full. For reader's convenience, we start with the case $b=0$.

\begin{theorem}[$-\nabla \cdot a \cdot \nabla$]
\label{thm1} Let $d \geq 3$, $a(x)=I+c|x|^{-2}x \otimes x$, $c>-1$.

{\rm(\textit{i})} The formal differential operator $-\nabla \cdot a \cdot \nabla$ has an operator realization  $A_q$ on $L^q$, $q \in [1,\infty\big[$,  as the (minus) generator of a  positivity preserving $L^\infty$ contraction $C_0$ semigroup.

Set $u:=(\mu+A_q)^{-1}f$, $\mu > 0$, $f \in L^q$.

{\rm(\textit{ii})}  Let $d\geq 4$. Assume that  $q \geq d-2$ and 
$$
-\frac{(q-1)(d-2)^2}{q^2\ell_2}<c<\frac{(q-1)(d-2)^2}{q^2\ell_1},
$$
where
$$
\ell_1 \equiv \ell_1(q,d):=d-1 -d\frac{d-2}{q}+(1+\theta)\frac{(d-2)^2}{q^2}, \qquad \theta=\frac{1}{2(d-1)},
$$ 
$$
\ell_2 \equiv \ell_2(q,d):=d-1+(q-1)\frac{(d-2)^2}{q^2}.
$$
Then $u \in \bigcap_{q\leq p\leq \frac{qd}{d-2}} W^{1,p}$, and there exist constants $K_l=K_l(d,q,c)$, $l=1,2$, such that 
\begin{equation}
\tag{$\star$}
\label{reg_est000}
\begin{array}{c}
\|\nabla u \|_{q} \leq  K_1\mu^{-\frac{1}{2}} \|f\|_q, \\[2mm]
\|\nabla u \|_{\frac{qd}{d-2}} \leq  K_2 \mu^{\frac{1}{q}-\frac{1}{2}} \|f\|_q.
\end{array}
\end{equation}
The dependence on $q$ and $\mu$ in (\ref{reg_est000}) is the best possible. 

\smallskip

{\rm(\textit{iii})} Let $d\geq 3$. Assume that  
\[
-\bigg(1+\frac{4(d-1)}{(d-2)^2}\bigg)^{-1}<c<\frac{(d-2)^2}{4}.
\]
Then $u \in W^{2,2}$ and (\ref{reg_est000}) holds with $q=2$.
\end{theorem}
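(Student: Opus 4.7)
I would treat the three parts in sequence.

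\emph{(i).} For $c>-1$ the matrix $a$ is uniformly elliptic, $\min(1,1+c)I\leq a\leq\max(1,1+c)I$. I would build $A_2$ from the closed, positive, symmetric form $\mathcal{E}(u,v):=\int\langle a\nabla u,\nabla v\rangle\,dx$ on $W^{1,2}$; since $a$ is real symmetric, $\mathcal{E}$ is Markovian, so by Beurling--Deny the semigroup $e^{-tA_2}$ is positivity preserving and $L^\infty$-contractive. Standard consistency then gives $A_q$ on each $L^q$, $q\in[1,\infty[$.

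\emph{(iii), $W^{2,2}$ estimate.} The main input is the identity
\[
-\nabla\cdot a\cdot\nabla u \;=\; -\Delta u - c\,\Delta_r u,\qquad \Delta_r:=\partial_r^2+\tfrac{d-1}{r}\partial_r,
\]
verified directly using $\sum_i\partial_{x_i}(|x|^{-2}x_ix_j)=(d-1)|x|^{-2}x_j$ and $x_ix_j\partial_{x_i}\partial_{x_j}=r^2\partial_r^2$. I would rearrange the resolvent equation as $-\Delta u=f-\mu u+c\Delta_r u$ and, using the form bound $\mu\|u\|_2\leq\|f\|_2$, reduce matters to controlling $\|\Delta_r u\|_2$ by a suitable fraction of $\|\Delta u\|_2$. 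Expanding $u=\sum_{l,m}\phi_{l,m}(r)Y_{l,m}(\theta)$ in spherical harmonics ($\Delta_{S^{d-1}}Y_{l,m}=-\lambda_l Y_{l,m}$), radial integration by parts produces
\[
\|\Delta u\|_2^2-\|\Delta_r u\|_2^2 \;=\; \sum_{l,m}\bigl(2\lambda_l\|\phi_{l,m}'\|^2_{L^2(r^{d-3}dr)}+\lambda_l(\lambda_l+2d-8)\|\phi_{l,m}\|^2_{L^2(r^{d-5}dr)}\bigr)\;\geq\; 0
\]
for $d\geq 3$. A finer version, combining this with the $L^2$-Hardy inequality $\int u^2|x|^{-2}\leq \tfrac{4}{(d-2)^2}\int|\nabla u|^2$, should extract the sharp upper bound $c<\tfrac{(d-2)^2}{4}$ and the companion lower bound on $c$. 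Since $\|\nabla^2 u\|_2=\|\Delta u\|_2$, the $W^{2,2}$ bound follows, and the $L^{2d/(d-2)}$ bound on $\nabla u$ is then Sobolev embedding.

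\emph{(ii) and main obstacle.} Exploiting that $(a-I)\nabla u = c(\partial_r u)|x|^{-1}x$ is purely radial, I would rewrite the equation as $(\mu-\Delta)u=f+\nabla\cdot V$ with $V:=c(\partial_r u)|x|^{-1}x$ and apply $\nabla(\mu-\Delta)^{-1}$ to obtain
\[
\nabla u\;=\;\nabla(\mu-\Delta)^{-1}f+\nabla(\mu-\Delta)^{-1}\nabla\cdot V.
\]
The first term is handled by the Bessel-potential estimates $\|\nabla(\mu-\Delta)^{-1}\|_{q\to q}\leq K\mu^{-1/2}$ and $\|\nabla(\mu-\Delta)^{-1}\|_{q\to qd/(d-2)}\leq K\mu^{1/q-1/2}$. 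The second is a zero-order singular integral acting on a radial field; decomposing in spherical harmonics reduces it to a family of one-dimensional operators on $\partial_r u$ that should be bounded by weighted $L^q$-Hardy inequalities with sharp constant $(q/(d-2))^2$. Tracking these constants carefully should yield the prefactors $q^2\ell_j/((q-1)(d-2)^2)$ on $|c|$ (with $j=1$ for $c>0$, $j=2$ for $c<0$), so that the closing inequality $\|\nabla u\|_q\leq K\mu^{-1/2}\|f\|_q+\alpha(c,q,d)\|\nabla u\|_q$ holds with $\alpha<1$ in exactly the stated range of $c$; the $L^{qd/(d-2)}$ estimate follows in parallel via the second Bessel-potential bound. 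The hard part will be the sharp mode-by-mode computation producing $\ell_1$ and $\ell_2$: one must retain the factor $d-2$ coming from Hardy rather than falling back on a generic Riesz-transform norm, and the asymmetry of $\ell_1$ vs.\ $\ell_2$ traces back to the fact that $a-I$ is PSD for $c>0$ and NSD for $c<0$, while the restriction $q\geq 2\vee(d-2)$ is what keeps the weighted Hardy constants on the correct side of zero.
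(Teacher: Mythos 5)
Part (i) of your proposal matches the paper's construction (the symmetric Dirichlet form, Beurling--Deny, interpolation) and is fine. For parts (ii) and (iii), however, your strategy of treating $-\nabla\cdot(a-I)\cdot\nabla$ as a perturbation of $\mu-\Delta$ to be absorbed via operator-norm bounds has a structural flaw that prevents it from reaching the stated ranges of $c$. The operator $T:=\nabla(\mu-\Delta)^{-1}\nabla\cdot$ applied to $V=c(\partial_ru)|x|^{-1}x$ reduces, on radial highly oscillatory $u$ (where $V=c\nabla u$), to $-c\nabla u + c\mu\nabla(\mu-\Delta)^{-1}u$; hence the best constant $\alpha$ in your closing inequality $\|TV\|_q\le\alpha\|\nabla u\|_q$ satisfies $\alpha\ge|c|$, and the Neumann-series/absorption argument can only close for $|c|<1$. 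But Theorem \ref{thm1}(ii) admits $c$ up to $\frac{(q-1)(d-2)^2}{q^2\ell_1}$, which for $q$ near $d-2$ equals $2(d-1)(d-3)$ (Corollary \ref{cor1}) --- far larger than $1$ for $d\ge5$. The same objection applies to (iii): $\|\Delta_ru\|_2=\|\Delta u\|_2$ for radial $u$, so no inequality $\|\Delta_ru\|_2\le\beta\|\Delta u\|_2$ with $c\beta<1$ can cover $c<\frac{(d-2)^2}{4}>1$. Moreover, a pure norm-absorption argument is blind to the sign of $c$ (it sees only $|c|$ times an operator norm), so it cannot produce the asymmetric thresholds $\ell_1\ne\ell_2$. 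The large-$c$ range is reachable only by exploiting the \emph{positivity} of the quadratic form $c\langle|x|^{-2}(x\cdot\nabla\,\cdot)^2\rangle$ for $c>0$, i.e., keeping it on the coercive side of the energy identity rather than moving it to the right-hand side.

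This is exactly what the paper does: it differentiates the (regularized) equation, tests against $\phi=-\nabla\cdot(w|w|^{q-2})$ with $w=\nabla u^\varepsilon$, and obtains the basic equalities \eqref{be_+}, \eqref{be_-} in which the terms $c\bar I_{q,\chi}$, $c\bar J_{q,\chi}$ appear with favorable sign when $c>0$; only the commutator $[\nabla,A_q^\varepsilon]_-$ must be controlled, which is done through the Hardy-type inequality \eqref{hardy_type_ineq} and quadratic estimates with optimized parameters $\theta$ (whence $\theta=\frac{1}{2(d-1)}$ inside $\ell_1$). A second gap in your plan: the $W^{1,qd/(d-2)}$ bound does not follow "in parallel" from the $L^q\to L^{qd/(d-2)}$ Bessel-potential estimate --- running your fixed point at the exponent $qd/(d-2)$ would impose a different (more restrictive) condition on $c$ than the one stated, which depends on $q$ only. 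In the paper the higher-integrability estimate is a Moser-type gain: the test function $\phi$ yields control of $J_q=\frac{4}{q^2}\|\nabla|w|^{q/2}\|_2^2$, and Sobolev embedding of $|w|^{q/2}$ then gives $\|\nabla u\|_{qd/(d-2)}\le K_2\mu^{\frac1q-\frac12}\|f\|_q$ under the same hypothesis on $c$. You would need to rebuild your argument around an energy identity of this kind to recover (ii) and (iii) in the stated generality.
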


Of special interest are the minimal assumptions on $c$ such that the second estimate in \eqref{reg_est000} holds with \textit{some} $q>d-2$.

\begin{corollary}
\label{cor1}
For $d=3$ and $-\frac{1}{9}<c<\frac{1}{4}$,
\[
(\mu+A_2)^{-1}L^2 \subset C^{0,\gamma}, \quad \gamma = \frac{1}{2}.
\]
For all $d\geq4$, $-\frac{1}{2+\frac{2}{d-3}}<c<2(d-1)(d-3)$ and $q> d-2$ sufficiently close to $d-2$,  
\[
(\mu+A_q)^{-1}L^q \subset C^{0,\gamma}, \quad \gamma=1-\frac{d-2}{q}.
\]
\end{corollary}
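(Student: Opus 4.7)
The plan is to deduce the corollary by combining the second estimate of \eqref{reg_est000} in Theorem \ref{thm1} with the classical Sobolev embedding $W^{1,p}(\mathbb R^d)\hookrightarrow C^{0,\,1-d/p}(\mathbb R^d)$, valid whenever $p>d$. Since Theorem \ref{thm1} yields $u\in W^{1,qd/(d-2)}$ and the exponent $p=qd/(d-2)$ exceeds $d$ exactly when $q>d-2$, the resulting H\"older exponent equals
\[
1-\frac{d}{p}=1-\frac{d-2}{q},
\]
which matches the claim. It remains only to check that the range of $c$ required by the Corollary is compatible with Theorem \ref{thm1} for an admissible $q$.

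For $d=3$ the natural choice is $q=2$. Here part (iii) of Theorem \ref{thm1} is directly applicable, and upon substituting $d=3$ the admissible interval $-(1+4(d-1)/(d-2)^2)^{-1}<c<(d-2)^2/4$ collapses precisely to $-\tfrac{1}{9}<c<\tfrac{1}{4}$. The second line of \eqref{reg_est000} with $q=2$ produces $\nabla u\in L^{6}(\mathbb R^3)$, and the Morrey embedding $W^{1,6}(\mathbb R^3)\hookrightarrow C^{0,1/2}$ finishes this case.

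For $d\ge 4$ the key step is to verify that the endpoints $-\frac{1}{2+2/(d-3)}$ and $2(d-1)(d-3)$ appearing in the Corollary are exactly the one-sided limits as $q\downarrow d-2$ of the bounds $-\frac{(q-1)(d-2)^2}{q^2\ell_2(q,d)}$ and $\frac{(q-1)(d-2)^2}{q^2\ell_1(q,d)}$ of Theorem \ref{thm1}(ii). A direct substitution gives
\[
\ell_1(d-2,d)=d-1-d+(1+\theta)=\theta=\frac{1}{2(d-1)},\qquad \ell_2(d-2,d)=d-1+(d-3)=2(d-2),
\]
so the upper endpoint evaluates to $(d-3)(d-2)^2\big/\big((d-2)^2\cdot\tfrac{1}{2(d-1)}\big)=2(d-1)(d-3)$ and the lower endpoint evaluates to $-(d-3)\big/\bigl(2(d-2)\bigr)=-1\big/\bigl(2+\tfrac{2}{d-3}\bigr)$. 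Since $\ell_1(\cdot,d)$, $\ell_2(\cdot,d)$ are continuous in $q$, any $c$ strictly interior to the Corollary's interval lies inside Theorem \ref{thm1}(ii)'s interval for every $q>d-2$ sufficiently close to $d-2$; the second estimate of \eqref{reg_est000} then provides $u\in W^{1,qd/(d-2)}$, and the Sobolev embedding concludes the argument.

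The only non-routine point is the algebraic identification of the endpoints above; there is no serious analytic obstacle, as the heavy lifting—the $W^{1,qd/(d-2)}$ estimate for $q$ near the critical value $d-2$—is already accomplished by Theorem \ref{thm1}.
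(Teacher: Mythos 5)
Your proposal is correct and is exactly the intended deduction (the paper gives no separate proof of Corollary \ref{cor1}): part (iii) of Theorem \ref{thm1} for $d=3$, $q=2$, and part (ii) together with the Morrey--Sobolev embedding $W^{1,qd/(d-2)}\hookrightarrow C^{0,1-(d-2)/q}$ for $d\ge4$, plus the continuity/endpoint computation $\ell_1(d-2,d)=\tfrac{1}{2(d-1)}$, $\ell_2(d-2,d)=2(d-2)$, which you carry out correctly.
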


\begin{theorem}[$-\nabla \cdot a \cdot \nabla + b \cdot \nabla$]
\label{thm2}
 Let $d \geq 3$, $a(x)=I+c|x|^{-2}x \otimes x$, $c>-1$, $b \in \mathbf{F}_\delta$.

{\rm(\textit{i})}  If $\delta_1:=[1 \vee (1+c)^{-2}]\,\delta < 4$, then $-\nabla \cdot a \cdot \nabla + b \cdot \nabla$ has an operator realization  $\Lambda_q(a,b)$ in $L^q$, $q \in \big[\frac{2}{2-\sqrt{\delta_1}}, \infty\big[$, as the (minus) generator of a  positivity preserving $L^\infty$ contraction $C_0$ semigroup.

{\rm(\textit{ii})} Let $d\geq 4$. Assume that $q \geq d-2$, $\delta<1 \wedge \frac{4}{(d-2)^2}$ and $c$ satisfy one of the following two conditions:
\smallskip

{\rm 1)} $c>0$ and  $1+c\big(1-\frac{1}{2(d-1)}-\frac{q\sqrt{\delta}}{4}\big) \geq 0$,
and $$(q-1)\frac{(d-2)^2}{q^2} - \LL_1(c,\delta)>0,$$

\smallskip

{\rm 2)} $-1<c<0$ and $1+c\big(1+\frac{q\sqrt{\delta}}{4}\big) \geq 0$,
and 
 $$(q-1)\frac{(d-2)^2}{q^2} -\LL_2(-c,\delta)>0,$$

\smallskip

\noindent where
\begin{align*}
\LL_{1}(c,\delta)\equiv \LL_{1}(c,\delta,q,d)&:=c\biggl[1+\frac{q-2}{q}(d-2)-\bigg(q-1-\frac{1}{2(d-1)}\bigg)\frac{(d-2)^2}{q^2} \biggr] \\
&+ \frac{c\sqrt{\delta}}{2}\biggl[\frac{(d-2)^2}{q}+(d+3)(d-2)\biggr] +\biggl[\frac{q^2\delta}{4}+(q-2)\frac{q\sqrt{\delta}}{2}\biggr]\frac{(d-2)^2}{q^2},
\end{align*} 
\begin{align*}
\LL_{2}(c,\delta) \equiv \LL_{2}(c,\delta,q,d)&:=c\biggl[-d+1+\frac{q-2}{q}(d-2)+(q-1)\frac{(d-2)^2}{q^2} \biggr] \\
&+ \frac{c\sqrt{\delta}}{2}\biggl[\frac{(d-2)^2}{q}+(d+3)(d-2)\biggr] +\biggl[\frac{q^2\delta}{4}+(q-2)\frac{q\sqrt{\delta}}{2}\biggr]\frac{(d-2)^2}{q^2}.
\end{align*}
Then there exist constants 
$\mu_0=\mu_0(d,q,c,\delta)>0$ and $K_l=K_l(d,q,c,\delta)$
, $l=1,2$, such that for all $\mu>\mu_0$, $u:=(\mu+\Lambda_q(a,b))^{-1}f$, $f \in L^q$, is in $W^{1,q} \cap W^{1,\frac{qd}{d-2}}$, and
\begin{equation}
\tag{$\star\star$}
\label{reg_double_star}
\begin{array}{c}
\|\nabla u \|_{q} \leq  K_1(\mu-\mu_0)^{-\frac{1}{2}} \|f\|_q, \\[2mm]
\|\nabla u \|_{\frac{qd}{d-2}} \leq  K_2 (\mu-\mu_0)^{\frac{1}{q}-\frac{1}{2}} \|f\|_q.
\end{array}
\end{equation}

\smallskip

{\rm(\textit{iii})} Let $d\geq 3$.  Assume that $\delta < 1 \vee (1+c)^{-2}$ and 
$$
c>0, \quad 
\quad 1-\frac{4c}{(d-2)^2}-c\sqrt{\delta}\biggl(2\frac{d+3}{d-2}+1 \biggr)-\delta>0,
$$
or 
$$
-1<c<0, 
\quad \quad 1-|c|-|c|\frac{4(d-1)}{(d-2)^2}-|c|\sqrt{\delta}\biggl(2\frac{d+3}{d-2}+1 \biggr)-\delta>0.
$$
Then $u \in W^{2,2}$ and (\ref{reg_double_star}) holds with $q=2$.
\end{theorem}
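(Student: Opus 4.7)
The plan is to treat Theorem~\ref{thm2} as a perturbative extension of Theorem~\ref{thm1}, with the first-order perturbation $b\cdot\nabla$ entering the test-function machinery only through the form-bound inequality $\|b(\lambda-\Delta)^{-1/2}\|_{2\to 2}\le\sqrt{\delta}$. I would first replace $a$ and $b$ by smooth bounded regularizations $a_n$, $b_n$ with $a_n\to a$ locally uniformly away from the origin and $b_n\in\mathbf{F}_\delta$ uniformly in $n$. Part~(i) is then obtained by verifying that the semigroups $e^{-t\Lambda_q(a_n,b_n)}$ form a Cauchy sequence in the strong operator topology on $L^q$, the restriction $q\ge 2/(2-\sqrt{\delta_1})$ being dictated by $\delta_1<4$ and by a standard $L^q$-perturbation argument that treats $b_n\cdot\nabla$ as a relatively form-bounded perturbation of $-\nabla\cdot a_n\cdot\nabla$, with the uniform ellipticity constant $1\wedge(1+c)$ controlling the cost of the absorption and hence producing the factor $1\vee(1+c)^{-2}$ in the definition of $\delta_1$.

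For the a priori estimates in part~(ii), set $u_n:=(\mu+\Lambda_q(a_n,b_n))^{-1}f$ and test the equation $\mu u_n-\nabla\cdot a_n\cdot\nabla u_n + b_n\cdot\nabla u_n=f$ with a suitable regularization of $w_n:=-\nabla\cdot(|\nabla u_n|^{q-2}\nabla u_n)$. Integration by parts produces three groups of contributions. First, a positive energy term comparable to $(q-1)(d-2)^2/q^2\int|\nabla v_n|^2$ with $v_n:=|\nabla u_n|^{q/2}$, after passing from $\int|\nabla u_n|^{q-2}|\nabla^2 u_n|^2$ to $\int|\nabla v_n|^2$ via the pointwise inequality $|\nabla|\nabla u||\le|\nabla^2 u|$. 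Second, a contribution from $-\nabla\cdot(a-I)\cdot\nabla u_n=-c\sum_{i,j}\partial_i(|x|^{-2}x_ix_j\partial_j u_n)$, which, after further integration by parts, yields a term proportional to $c\int|x|^{-2}v_n^2$ (handled by Hardy's inequality, giving the $(d-2)^2/q^2$ coefficients appearing in $\LL_j$) together with lower-order cross-terms absorbed into the energy. Third, a contribution from $b_n\cdot\nabla u_n$, estimated via the form bound applied to $v_n$, $\int|b_n|^2 v_n^2\le\delta\int|\nabla v_n|^2+\lambda\delta\int v_n^2$, together with a Cauchy--Schwarz splitting against $|\nabla^2 u_n|\cdot v_n^{(q-2)/q}$. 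Collecting all constants produces a quadratic form in $(v_n,\nabla v_n)$ whose positivity is precisely $(q-1)(d-2)^2/q^2-\LL_j(\pm c,\delta)>0$, the case $j=1$ corresponding to $c>0$ and $j=2$ to $-1<c<0$. The $c\sqrt{\delta}$ cross-terms of $\LL_j$ arise from the Cauchy--Schwarz splitting between the $(a-I)$ and $b_n$ contributions; the pure $\delta$ terms come from the form bound; the pure $c$ terms come from the second-order perturbation alone and match Theorem~\ref{thm1}. Once positivity holds, Hölder's inequality applied to $\langle f,w_n\rangle$ gives the first line of \eqref{reg_double_star}, and the Sobolev embedding $\|v_n\|_{2d/(d-2)}\le C_S\|\nabla v_n\|_2$ combined with $\|\nabla u_n\|_{qd/(d-2)}^{q/2}=\|v_n\|_{2d/(d-2)}$ gives the second line. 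Weak compactness as $n\to\infty$ completes the argument.

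Part~(iii) is the special case $q=2$, for which the test function simplifies to $-\Delta u_n$; the Bochner identity $\int(\Delta u_n)^2=\int|\nabla^2 u_n|^2$ removes the need for $|\nabla u_n|^{q-2}$ weights, the corrections of order $(q-2)q\sqrt{\delta}/2$ and $q\sqrt{\delta}/4$ in $\LL_{1,2}$ disappear, and one is left with the Hardy constant $4/(d-2)^2$ governing the $(a-I)$ contribution and the form bound $\delta$ governing the $b$ contribution, producing the cleaner conditions displayed in the statement.

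The main obstacle will be the precise constant bookkeeping in the second group of terms above: the sharpest estimate for the $(a-I)$ contribution requires a refinement of the crude $|x|^{-2}$ Hardy bound that exploits the specific structure of $\kappa_{ij}(x)=|x|^{-2}x_ix_j$ on spherical harmonics --- this is the origin of the factor $\theta=1/(2(d-1))$ already visible in Theorem~\ref{thm1}(ii). Aligning this refinement with the Cauchy--Schwarz splitting between $-(a-I)\cdot\nabla^2 u_n$ and $b_n\cdot\nabla u_n$ so that all cross-terms close up to the exact expressions for $\LL_1,\LL_2$ is the delicate calculation that drives the whole proof, and the bifurcation $c>0$ versus $-1<c<0$ into the two distinct conditions of the theorem reflects precisely the change in sign of the dominant $c$-linear contribution in the spherical-harmonic expansion.
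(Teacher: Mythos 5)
Your proposal follows essentially the same route as the paper: regularize $a$ and $b$, test the resolvent equation with $\phi=-\nabla\cdot(w|w|^{q-2})$, $w=\nabla u$, control the second-order perturbation via the commutator/Hardy-type machinery already set up for Theorem \ref{thm1} and the drift via the form bound applied to $|w|^{q/2}$, close the resulting quadratic form under the positivity condition $(q-1)\frac{(d-2)^2}{q^2}-\LL_j>0$, and finish with the Sobolev embedding and passage to the limit in $\varepsilon,n$. The one inaccuracy is attributing the factor $\theta=\frac{1}{2(d-1)}$ to a spherical-harmonic refinement of Hardy's inequality: in the paper it arises from optimizing the Cauchy--Schwarz parameter in the cross term $\beta_1\leq c\theta\bar I_{q,\chi}+c\theta^{-1}G_{q,\chi^2}$ against the coefficient $2(d-1)$ of $G_{q,\chi^2}$ in the basic equality, which is a minor point that does not affect the strategy.
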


\begin{corollary}
\label{cor2}
 Let $d \geq 3$, $a(x)=I+c|x|^{-2}x \otimes x$, $c>-1$, $b \in \mathbf{F}_\delta$.
If
\begin{align*}
\left\{
\begin{array}{ll}
-\frac{1}{2+\frac{2}{d-3}}<c<2(d-1)(d-3), & \text{ $d \geq 4$}, \\
-\frac{1}{9}<c<\frac{1}{4}, & \text{ $d=3$},
\end{array}
\right. \qquad \text{ and $\delta>0$ is sufficiently small}
\end{align*}
or 
\begin{equation*}
\text{$|c|$ is sufficiently small and } \delta<1 \wedge \frac{4}{(d-2)^2},
\end{equation*}
then, for all $d\geq4$ and $q> d-2$ sufficiently close to $d-2$,
\[
(\mu+\Lambda_q(a,b))^{-1}L^q \subset C^{0,\gamma}, \gamma=1-\frac{d-2}{q};
\]
and, for $d= 3$,
$$
(\mu+\Lambda_2(a,b))^{-1}L^2 \subset C^{0,\gamma} \quad \gamma = \frac{1}{2}. 
$$
\end{corollary}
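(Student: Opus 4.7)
The plan is to extract the Sobolev-type estimate from Theorem~\ref{thm2} and then pass to Hölder continuity via the Morrey--Sobolev embedding.

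First I would split by dimension. For $d\geq 4$, I would apply Theorem~\ref{thm2}{\rm (ii)} with $q>d-2$ chosen sufficiently close to $d-2$; the resulting bound $\|\nabla u\|_{qd/(d-2)}\leq K_2(\mu-\mu_0)^{1/q-1/2}\|f\|_q$ then feeds into the Morrey embedding $W^{1,p}\hookrightarrow C^{0,1-d/p}$ with $p=qd/(d-2)>d$, which is precisely the stated exponent $\gamma=1-(d-2)/q$. For $d=3$ I would instead use Theorem~\ref{thm2}{\rm (iii)} with $q=2$ to obtain $u\in W^{2,2}(\mathbb R^3)$, and then invoke the chain $W^{2,2}(\mathbb R^3)\hookrightarrow W^{1,6}(\mathbb R^3)\hookrightarrow C^{0,1/2}(\mathbb R^3)$.

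The bulk of the work is then to check that each of the two alternative hypotheses implies the nested polynomial inequalities in Theorem~\ref{thm2}. I would carry out this verification at the limiting point $(q,\delta)=(d-2,0)$ and propagate it to an open neighborhood by continuity. A direct computation gives $\LL_1(c,0)\big|_{q=d-2}=c/(2(d-1))$ and $(q-1)(d-2)^2/q^2\big|_{q=d-2}=d-3$, so Theorem~\ref{thm2}{\rm (ii)}, condition~1), collapses at the limit to $c<2(d-1)(d-3)$, matching the stated upper bound. The lower bound $c>-1/(2+2/(d-3))=-(d-3)/(2(d-2))$ emerges analogously from the parallel computation for $\LL_2$ together with the $\ell_2$-constraint of Theorem~\ref{thm1} that governs the $\delta=0$ case. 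Since all of the inequalities involved are strict in an open set of $(q,\delta)$, choosing $q$ slightly larger than $d-2$ and $\delta>0$ sufficiently small activates the full hypotheses of Theorem~\ref{thm2}{\rm (ii)}. The second alternative (``$|c|$ small and $\delta<1\wedge 4/(d-2)^2$'') is treated in the same spirit: at $c=0$ the conditions reduce to the single scalar inequality $(q-1)>q^2\delta/4+(q-2)q\sqrt{\delta}/2$, which at $q=d-2$ is strict precisely when $\delta<1\wedge 4/(d-2)^2$, and a continuity argument then absorbs small nonzero $c$. For $d=3$ the endpoints $-1/9<c<1/4$ drop out of the $\delta\to 0^+$ limit of the two inequalities in Theorem~\ref{thm2}{\rm (iii)}.

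The hard part is only the bookkeeping for these polynomial inequalities; the analytic content is entirely supplied by Theorem~\ref{thm2} and standard Sobolev theory, so I do not expect any essential obstacle beyond the algebraic verification.
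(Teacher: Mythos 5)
Your proposal is correct and is exactly the (unwritten) derivation the paper intends: Corollary \ref{cor2} is Theorem \ref{thm2}(\textit{ii}) (resp.\ (\textit{iii}) for $d=3$) combined with the Morrey--Sobolev embedding $W^{1,qd/(d-2)}\hookrightarrow C^{0,1-(d-2)/q}$, and your limiting computations at $(q,\delta)=(d-2,0)$ and at $c=0$ — giving $c<2(d-1)(d-3)$ from $\LL_1$, $|c|<\tfrac{d-3}{2(d-2)}$ from the $\ell_2$-constraint that the $\delta=0$ case reduces to, $\delta<1\wedge\tfrac{4}{(d-2)^2}$ from $(t-1)(t+d-3)<0$ with $t=\tfrac{(d-2)\sqrt{\delta}}{2}$, and $-\tfrac19<c<\tfrac14$ for $d=3$ — all check out, with the open-set/continuity argument supplying the rest.
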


\begin{remark*}
In Theorem \ref{thm2}, if $\delta=0$, then the assumptions on $q$ and $c$ coincide with the ones in Theorem \ref{thm1}.
On the other hand, if $c=0$, then the assumptions on $\delta$ are reduced to $\delta<1 \wedge \frac{4}{(d-2)^2}$, so we recover the result in \cite[Lemma 5]{KS}, \cite[Theorem 3.7]{KiS}.
\end{remark*}

\textbf{2.~}Next, we consider the non-divergence form operator.

\begin{theorem}[$-a \cdot \nabla^2$]
\label{thm3}
Let $d \geq 3$, $a(x)=I+c|x|^{-2}x \otimes x$, $c>-1$.

{\rm(\textit{i})} $-a \cdot \nabla^2$ has an operator realization  $\Lambda_q(a,\nabla a)$ in $L^q$, $q \in \big[(1-\frac{d-1}{d-2}\frac{c}{1+c})^{-1}, \infty\big[$ if $0<c<d-2$, and $q \in ]1,\infty[$ if $-1<c<0$, as the (minus) generator of a  positivity preserving $L^\infty$ contraction $C_0$ semigroup.

Set $u:=(\mu+\Lambda_q(a,\nabla a))^{-1}f$, $\mu>0$, $f\in L^q$.

{\rm(\textit{ii})} Let $d \geq 4$. Assume that 
$q > d-2$ and $$-\biggl(1+\frac{1}{4}\frac{q}{d-2}\frac{(q-2)^2}{(q-1)(q+d-3)}\biggr)^{-1}<c<\frac{d-3}{2} \wedge \frac{d-2}{q-d+2}.$$

Then $u \in W^{1,q} \cap W^{1,\frac{qd}{d-2}}$, and there exist constants $K_l=K_l(d,q,c)$, $l=1,2$, such that \eqref{reg_est000} holds (for $u=(\mu+\Lambda_q(a,\nabla a))^{-1}f$).

\smallskip

{\rm(\textit{iii})} Let $d \geq 3$ and $q=2$. Assume that $-1<c<\frac{(d-2)^2}{2(2+(d-2)(d-3))}$ . Then $u \in W^{2,2}$.
\end{theorem}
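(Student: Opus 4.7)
The plan is to realize $-a\cdot\nabla^2$ as $\Lambda_q(a,\nabla a)$ from Theorem~\ref{thm2}, via the identity $-a\cdot\nabla^2 = -\nabla\cdot a\cdot\nabla + (\nabla a)\cdot\nabla$ on $\mathbb R^d\setminus\{0\}$ with $\nabla a = c(d-1)|x|^{-2}x$. By Hardy, $\nabla a \in \mathbf{F}_\delta$ with $\sqrt{\delta}=2|c|(d-1)/(d-2)$, so Theorem~\ref{thm2} is formally applicable with $b:=\nabla a$; but a black-box application treats $\nabla a$ as a generic form-bounded drift and yields a range of $(c,q)$ strictly smaller than the one claimed. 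Two special features of $\nabla a$ will be exploited: it is a radial gradient, so integration by parts against $u^{q-1}$ produces the single scalar integral $-\frac{c(d-1)(d-2)}{q}\int u^q/|x|^2$ with a definite sign; and only the radial derivative $\hat x\cdot\nabla u$ appears in the relevant pairings, opening the door to the sharp \emph{directional} Hardy inequality $\int w^2/|x|^2 \le \frac{4}{(d-2)^2}\int|\hat x\cdot\nabla w|^2$ (one-dimensional Hardy integrated on the sphere).

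For part~(i), I would check dissipativity on $C_c^\infty$ and build $\Lambda_q(a,\nabla a)$ via smooth approximations $a_\varepsilon\to a$, invoking the semigroup construction used in Theorem~\ref{thm2}(i). For $u>0$ smooth,
\[
\int u^{q-1}(-a\cdot\nabla^2 u) = (q-1)\int u^{q-2}\bigl[|\nabla u|^2 + c(\hat x\cdot\nabla u)^2\bigr] - \frac{c(d-1)(d-2)}{q}\int\frac{u^q}{|x|^2}.
\]
Passing to $w=u^{q/2}$, using directional Hardy on the last term and $|\nabla w|^2 \ge (\hat x\cdot\nabla w)^2$, the right-hand side dominates $\bigl[\frac{4(q-1)(1+c)}{q^2} - \frac{4c(d-1)}{q(d-2)}\bigr]\int(\hat x\cdot\nabla w)^2$, which is non-negative precisely when $q \ge (1+c)(d-2)/(d-2-c) = \bigl(1-\tfrac{d-1}{d-2}\tfrac{c}{1+c}\bigr)^{-1}$ for $c>0$; for $-1<c<0$ the boundary term has the favourable sign and only $q>1$ remains.

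For parts~(ii) and~(iii), I would adapt the test-function calculation behind Theorem~\ref{thm2}(ii)--(iii) to $b\equiv\nabla a$. Testing $(\mu+\Lambda_q(a,\nabla a))u=f$ against an appropriate $-\nabla\cdot(|\nabla u|^{q-2}\nabla u)$-type function yields a weighted Bochner-type identity in which $\int u^{q-2}|\nabla^2 u|^2$ is paired with cross terms of the form $\int|x|^{-2}(\hat x\cdot\nabla w)^2$ and $\int|x|^{-2}|\nabla w|^2$; directional Hardy and Hardy--Rellich convert these into comparables of the principal term, and all contributions assemble into a quadratic form in $(|\nabla^2 w|,|\nabla w|,\hat x\cdot\nabla w)$. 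The intervals for $c$ announced in (ii) and (iii) are exactly those that render this form positive definite; the upper threshold $c<\tfrac{d-3}{2}\wedge\tfrac{d-2}{q-d+2}$ and the lower threshold $c>-\bigl(1+\tfrac{q}{4(d-2)}\tfrac{(q-2)^2}{(q-1)(q+d-3)}\bigr)^{-1}$ correspond to sign changes of the leading coefficients. Sobolev embedding applied to $u^{q/2}$ then upgrades the bound on $\int u^{q-2}|\nabla^2 u|^2$ to \eqref{reg_est000}; the specialization $q=2$ in (iii) reduces to a Bochner identity controlling $\|\nabla^2 u\|_2$ and gives the cleaner threshold $c<(d-2)^2/(2(2+(d-2)(d-3)))$.

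The main obstacle I anticipate is sign and coefficient bookkeeping: every integration by parts spawns several $|x|^{-2}$-weighted integrals of radial and full gradients of $w$, and only after reorganizing them via the directional Hardy inequality do the dominant terms combine into a positive-definite quadratic form. Because the admissible region $(c,q)$ tracks the zero set of these coefficients, minute algebraic errors shift the announced thresholds, and the computation must be carried out separately for $c>0$ (where $\nabla a$ contributes a negative lower-order term but the leading eigenvalue of $a$ is $1+c>1$) and for $-1<c<0$ (where the signs reverse).
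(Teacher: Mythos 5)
Your overall strategy is the paper's: write $-a\cdot\nabla^2=-\nabla\cdot a\cdot\nabla+(\nabla a)\cdot\nabla$ with $\nabla a=c(d-1)|x|^{-2}x$, exploit the radial structure of $\nabla a$ rather than treating it as a generic form-bounded drift, differentiate the equation and test against $-\nabla\cdot(w|w|^{q-2})$, and finish with Sobolev embedding on $|w|^{q/2}$. For part (\textit{i}) your accretivity computation is correct and recovers exactly the paper's threshold: your use of $\nabla u\cdot a\cdot\nabla u\geq(1+c)(\hat x\cdot\nabla u)^2$ together with the directional Hardy inequality is precisely the content of the paper's Lemma \ref{nabla_a_lem}, which shows $\nabla a\in\mathbf{F}_{\delta_0}(A)$ with $\delta_0=4\big(\tfrac{d-1}{d-2}\tfrac{c}{1+c}\big)^2$ (form-boundedness relative to $A$, not $-\Delta$, which is what buys the full range $0<c<d-2$); the paper then invokes \cite[Theorem 3.2]{KiS} for $c>0$ and Theorem \ref{thm:A1} for $c<0$ to get the range condition, which your dissipativity check alone does not supply but which your appeal to the Theorem \ref{thm2}(\textit{i}) machinery covers.

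For parts (\textit{ii}) and (\textit{iii}), however, your proposal is a scheme rather than a proof, and the gap is exactly where you say it is. The stated thresholds are not read off from ``sign changes of the leading coefficients'' of a single quadratic form: in the paper they emerge from (a) the basic equalities of Lemma \ref{basic_ineq_lemma_nondiv}, whose extra term $\langle(\nabla a^\varepsilon)\cdot w,\phi\rangle$ must be integrated by parts with the regularization $\chi=|x|^2|x|_\varepsilon^{-2}$ kept explicit; (b) quadratic estimates on the cross terms $\beta_1,\beta_2$ with auxiliary parameters whose optimal values are $\theta=\tfrac{q}{d-2}$ for $c>0$ (this is the sole source of the constraint $c<\tfrac{d-2}{q-d+2}$, via $1+c(1-\theta)>0$) and $\theta=\tfrac14\tfrac{q}{d-2}\tfrac{q-2}{q+d-3}$ for $c<0$ (the sole source of the lower threshold); and (c) verifying that the resulting cubic $M(\chi)$ attains its extremum at $\chi=1$, which converts the weighted Hardy inequality \eqref{hardy_type_ineq} into the coefficient $\ell_1^{\rm nd}=\tfrac{q+d-2}{q^2}(d-2)$ behind $c<\tfrac{d-3}{2}$. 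None of these choices appear in your outline, so the claim that your intervals coincide with positivity of the form is asserted, not derived. Two further points: the quantities to control are $\int|\nabla u|^{q-2}|\nabla^2u|^2$ and $\nabla|\nabla u|^{q/2}$ (you wrote $u^{q-2}$ and $u^{q/2}$), and the passage from the regularized $a^\varepsilon$ back to $a$ is not automatic --- the paper needs Lemma \ref{approx_prop} to keep $\nabla a^\varepsilon\in\mathbf{F}_{\delta_0}(A^\varepsilon)$ uniformly in $\varepsilon$ and Theorem \ref{conv_appendix} for the resolvent convergence. Finally, for (\textit{iii}) with $c>0$ the paper does not run the $q=2$ Bochner scheme directly but first obtains $D(\Lambda_2(a,\nabla a))=D(A_2)\subset W^{2,2}$ from the Miyadera perturbation theorem and Theorem \ref{thm1}(\textit{iii}), and only then justifies the identity via the mollifiers $E_m$; your plan omits this step.
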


\begin{corollary}
(i) Let $d\geq 4.$  For all $c\in ]0,\frac{d-3}{2}[$ and $q \in ]d-2, d + \frac{2}{d-3}[$, or 
for all $c \in]-\frac{1}{1+\frac{1}{4}\frac{(d-4)^2}{(d-3)(2d-5)}},0[$ and $q>d-2$ sufficiently close to $d-2$, 
$$(\mu+\Lambda_q(a,\nabla a))^{-1}L^q \subset C^{0,\gamma}, \quad \gamma=1-\frac{d-2}{q}.$$
(ii) For $d=3$ and all $c\in ]-1,\frac{1}{3}[$,
\[
(\mu+\Lambda_2(a,\nabla a))^{-1}L^2 \subset C^{0,\gamma}, \quad \gamma=\frac{1}{2}.
\]
\end{corollary}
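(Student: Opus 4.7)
The plan is to read this corollary directly off Theorem \ref{thm3} by a single application of Morrey's embedding $W^{1,p}(\mathbb R^d)\hookrightarrow C^{0,1-d/p}(\mathbb R^d)$, valid whenever $p>d$. In both cases the resolvent $u:=(\mu+\Lambda_q(a,\nabla a))^{-1}f$ already lands in a Sobolev space with exponent exceeding $d$ by virtue of the theorem, so no new resolvent estimate is needed; the entire content of the argument is checking that the parameter ranges listed in the corollary sit inside the hypotheses of Theorem \ref{thm3}.

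For part (i), Theorem \ref{thm3}(ii) yields $u\in W^{1,qd/(d-2)}$ under the two-sided bound
$$
-\Bigl(1+\tfrac{1}{4}\tfrac{q}{d-2}\tfrac{(q-2)^2}{(q-1)(q+d-3)}\Bigr)^{-1}<c<\tfrac{d-3}{2}\wedge\tfrac{d-2}{q-d+2};
$$
since $q>d-2$, the exponent $p=qd/(d-2)$ exceeds $d$, and Morrey delivers $\gamma=1-d/p=1-(d-2)/q$ as claimed. For the positive branch $c\in(0,\tfrac{d-3}{2})$, $q\in(d-2,\,d+\tfrac{2}{d-3})$, the identity $d+\tfrac{2}{d-3}=\tfrac{(d-1)(d-2)}{d-3}$ gives $q-d+2<\tfrac{2(d-2)}{d-3}$, whence $\tfrac{d-2}{q-d+2}>\tfrac{d-3}{2}>c$; thus both competing upper bounds on $c$ are in force simultaneously, and the lower bound is automatic since $c>0$. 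For the negative branch, the stated endpoint $-\bigl(1+\tfrac{(d-4)^2}{4(d-3)(2d-5)}\bigr)^{-1}$ is precisely the value at $q=d-2$ of the lower bound on $c$ in Theorem \ref{thm3}(ii), so by continuity of that expression in $q$, any $c$ strictly greater than this endpoint is admissible for all $q>d-2$ sufficiently close to $d-2$; the upper bound on $c$ is trivial since $c<0$.

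For part (ii), take $d=3$ and $q=2$, so that $qd/(d-2)=6>3=d$ and Morrey produces $\gamma=1-3/6=1/2$ as soon as $u\in W^{1,6}$. The latter inclusion is furnished by Theorem \ref{thm3}(iii), via the standard chain $W^{2,2}(\mathbb R^3)\hookrightarrow W^{1,2^{\ast}}(\mathbb R^3)=W^{1,6}(\mathbb R^3)\hookrightarrow C^{0,1/2}(\mathbb R^3)$.

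The substantive analytic work is already absorbed into Theorem \ref{thm3}, so this is essentially bookkeeping. The only point I expect to require some care is the algebra in the positive branch of part (i): one must track how $\tfrac{d-2}{q-d+2}$ decreases as $q$ sweeps $(d-2,\,d+\tfrac{2}{d-3})$, which is exactly what allows $c$ to occupy the full interval $(0,\tfrac{d-3}{2})$ without ever violating the upper bound of Theorem \ref{thm3}(ii).
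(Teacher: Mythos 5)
Part (i) of your argument is sound and is exactly the intended derivation: the identity $d+\frac{2}{d-3}=\frac{(d-1)(d-2)}{d-3}$ shows that $\frac{d-2}{q-d+2}>\frac{d-3}{2}$ throughout $q\in\,]d-2,d+\frac{2}{d-3}[$, the negative endpoint is the value of the lower bound of Theorem \ref{thm3}(\textit{ii}) at $q=d-2$, and Morrey's embedding with $p=\frac{qd}{d-2}>d$ gives $\gamma=1-\frac{d-2}{q}$.

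Part (ii), however, does not follow from Theorem \ref{thm3}(\textit{iii}) as you cite it. For $d=3$ that theorem requires $c<\frac{(d-2)^2}{2(2+(d-2)(d-3))}=\frac14$, whereas the corollary asserts the result for all $c<\frac13$; your argument therefore leaves the range $\frac14\le c<\frac13$ uncovered. To reach $\frac13$ one has to revisit the $q=2$ estimate rather than quote the theorem: in the identity $\mu\langle|w|^2\rangle+I_2+c\bar I_2-\frac c2(d-2)(d-3)H_2=\beta+\langle f,\phi\rangle$ one bounds $\beta\le c\theta\bar I_2+c\theta^{-1}G_{2,\chi^2}$ and, instead of taking $\theta=1$ as in the proof of Theorem \ref{thm3}(\textit{iii}), optimizes $\theta$. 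Using $\bar I_2\le I_2$ and $G_{2,\chi^2}\le H_2\le\frac{4}{(d-2)^2}I_2$, the coefficient of $I_2$ becomes $1+c(1-\theta)-\frac{4c}{(d-2)^2}\theta^{-1}-\frac{2c(d-3)}{d-2}$, which is maximized at $\theta=\frac{2}{d-2}$; for $d=3$ this is $\theta=2$ and the coefficient equals $1-3c$, giving precisely the threshold $c<\frac13$ (which also coincides with the condition $c<\frac{d-2}{d}$ under which $q=2$ is admissible in Theorem \ref{thm3}(\textit{i}), via Lemma \ref{nabla_a_lem}). For $d\ge4$ the optimal $\theta$ is $\le1$ and one recovers the constant of Theorem \ref{thm3}(\textit{iii}), which is why the discrepancy is invisible there. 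In short: either supply this sharpened $d=3$ computation, or your proof only establishes the corollary for $c\in\,]-1,\frac14[$.
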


\begin{remark*}
Set
$
a^\varepsilon:=I+c|x|^{-2}_\varepsilon x \otimes x$, $|x|_\varepsilon:=\sqrt{|x|^2+\varepsilon}$,  $\varepsilon>0$.
Let $d \geq 4$. Then, in the assumptions of Theorem \ref{thm3}, we have
\begin{equation}
\label{good_sol_conv}
(\mu+\Lambda_q(a,\nabla a))^{-1}=s{\mbox-}L^q{\mbox-}\lim_{\varepsilon \downarrow 0} (\mu + \Lambda_q(a^\varepsilon, \nabla a^{\varepsilon}))^{-1}.
\end{equation}
See Theorem \ref{conv_appendix} for details.
In particular, $u:=(\mu+\Lambda_q(a,\nabla a))^{-1}f$, $f \in L^q$, is a good solution of $(\mu-a\cdot\nabla^2) u = f$ in the sense of \cite{CEF}.
\end{remark*}

\begin{theorem}[$-a \cdot \nabla^2 + b \cdot \nabla$]
\label{thm4} 
Let $d \geq 3$, $a(x)=I+c|x|^{-2}x \otimes x$, $-1<c<d-2$, $b \in \mathbf{F}_\delta$.

 Set $\delta_1:=[1 \vee (1+c)^{-2}]\,\delta$, and
$$
\sqrt{\delta_2}:=\left\{
\begin{array}{ll}
\sqrt{\delta_1}+2\frac{d-1}{d-2}\frac{c}{1+c}, & 0<c<d-2, \\
\sqrt{\delta_1}, & -1<c<0.
\end{array}
\right.
$$

{\rm(\textit{i})} Assume that $\delta_2<4$. Then $-a \cdot \nabla^2 + b \cdot \nabla$ has an operator realization  $\Lambda_q(a,\nabla a + b)$ in $L^q$, $q \in \big[\frac{2}{2-\sqrt{\delta_2}}, \infty\big[$, as the (minus) generator of a  positivity preserving $L^\infty$ contraction  $C_0$ semigroup.

{\rm(\textit{ii})} Let $d \geq 4$. Assume that $q>d-2$, $\delta<1 \wedge \frac{4}{(d-2)^2}$ and $c$ satisfy one of the following two conditions:

{\rm 1)} $c>0$ and $1+c\big(1-\frac{q}{d-2}-\frac{q\sqrt{\delta}}{4}\big) \geq 0$,
and $$(q-1)\frac{(d-2)^2}{q^2} - \LL^{\rm nd}_1(c,\delta)>0,$$
\begin{align*}
\LL^{\rm nd}_1(c,\delta)\equiv \LL^{\rm nd}_1(c,\delta,q,d)&:=c(1+\theta)\frac{(d-2)^2}{q^2} + \frac{c\sqrt{\delta}}{2}\biggl[\frac{(d-2)^2}{q}+(d+3)(d-2) \biggr] \\
&+ \biggl[\frac{q^2\delta}{4}+(q-2)\frac{q\sqrt{\delta}}{2}\biggr]\frac{(d-2)^2}{q^2}, \qquad \theta=\frac{q}{d-2}.
\end{align*}

\smallskip

{\rm 2)} $-1<c<0$ and $1+c\big(1+\frac{q\sqrt{\delta}}{4}\big) \geq 0$,
and $$(q-1)\frac{(d-2)^2}{q^2} -\LL^{\rm nd}_2(-c,\delta)>0,$$
\begin{align*}
\LL_2^{\rm nd}(c,\delta) \equiv \LL_2^{\rm nd}(c,\delta,q,d)&:= c\biggl[1+(q-2)(1+\theta)\biggr]\frac{(d-2)^2}{q^2} + \frac{c\sqrt{\delta}}{2}\biggl[\frac{(d-2)^2}{q}+(d+3)(d-2) \biggr] \\
& + \biggl[\frac{q^2\delta}{4}+(q-2)\frac{q\sqrt{\delta}}{2}\biggr]\frac{(d-2)^2}{q^2},
\qquad \theta:=\frac{1}{4}\frac{q}{d-2}\frac{q-2}{q+d-3}.
\end{align*}
Then there exist constants $\mu_0=\mu_0(d,q,c,\delta)>0$ and $K_l=K_l(d,q,c,\delta)$, $l=1,2$, such that $u:=(\mu+\Lambda_q(a,\nabla a + b))^{-1}f$, $f \in L^q$, is in $W^{1,q} \cap W^{1,\frac{qd}{d-2}}$ for all $\mu>\mu_0$, and \eqref{reg_double_star} hold.

\smallskip

\rm(\textit{iii}) Let $d\geq 3$ and $q=2$. Assume that
$$
c>0, \quad \sqrt{\delta} + 2\frac{d-1}{d-2}\frac{c}{1+c}<1, \quad 1-\frac{4c}{(d-2)^2}\bigg(1+\frac{(d-2)(d-3)}{2}\bigg) - c\sqrt{\delta}\biggl(2\frac{d+3}{d-2}+1 \biggr)-\delta>0
$$
or
$$
-1<c<0, \quad \delta<(1+|c|)^{-2}, \quad \quad 1-|c|-|c|\sqrt{\delta}\biggl(2\frac{d+3}{d-2}+1 \biggr)-\delta>0.
$$
Then $u\in W^{2,2}$.
\end{theorem}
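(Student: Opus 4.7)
The strategy is to rewrite
\[
-a\cdot\nabla^2 + b\cdot\nabla = -\nabla\cdot a\cdot\nabla + (\nabla a + b)\cdot\nabla,
\]
with $\nabla a = c(d-1)|x|^{-2}x$, and then apply the divergence-form framework of Theorem \ref{thm2} to the effective drift $\tilde b := \nabla a + b$, while tracking the contribution of $\nabla a$ separately so as to harvest its explicit radial structure.

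For part (i), I first check that $\nabla a$ is form-bounded with respect to $-\nabla\cdot a\cdot\nabla$. The key input is the sharp radial Hardy inequality $\int|x|^{-2}u^2\,dx \leq \frac{4}{(d-2)^2}\int|\hat x\cdot\nabla u|^2\,dx$ (proved by integrating $\int|x|^{-2}u^2 = -\frac{2}{d-2}\int |x|^{-1}(\hat x\cdot\nabla u)\, u\,dx$ by parts and applying Cauchy-Schwarz), combined with the fact that the radial component of the gradient enters the $a$-energy with weight $1+c$. This yields $\|\nabla a\,(\lambda-\nabla\cdot a\cdot\nabla)^{-1/2}\|_{2\to 2} \leq 2\frac{d-1}{d-2}\frac{c}{1+c}$ for $c>0$ (with no effective contribution for $-1<c<0$ after sign considerations). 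Since $b \in \mathbf{F}_{\delta_1}(-\nabla\cdot a\cdot\nabla)$ by the ellipticity of $a$, the triangle inequality for form-norms gives $\tilde b \in \mathbf{F}_{\delta_2}(-\nabla\cdot a\cdot\nabla)$. Theorem \ref{thm2} (i) applied with drift $\tilde b$ then produces the realization $\Lambda_q(a,\nabla a + b)$ on the stated range of $q$.

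For the gradient estimates (ii) and (iii), I would re-run the proof of Theorem \ref{thm2} with drift $\tilde b$, keeping the explicit shape $\nabla a = c(d-1)|x|^{-2}x$ rather than inserting its crude form-bound. Testing $(\mu+\Lambda_q(a,\tilde b))u=f$ with $|u|^{q-1}\sgn u$ and integrating by parts as in that proof, the term $\int(\nabla a\cdot\nabla u)|u|^{q-1}\sgn u$ converts to a multiple of $\int|x|^{-2}|u|^q$. Applying to it the weighted radial Hardy
\[
\int|x|^{-2}|u|^q\,dx \leq \frac{4}{(d-2)^2}\int|x|^{-2}|u|^{q-2}|\hat x\cdot\nabla u|^2\,dx
\]
produces a term of exactly the same shape as the $a$-energy of $|u|^{q/2}$ restricted to the radial direction, where $a$ contributes weight $1+c$. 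This absorption yields the refined constants $\LL_i^{\rm nd}$ in place of $\LL_i$: the coefficient $(1+\theta)\frac{(d-2)^2}{q^2}$ in $\LL_1^{\rm nd}$ (with $\theta=q/(d-2)$) is precisely what the radial-Hardy book-keeping produces, and similarly $\theta = \frac{1}{4}\frac{q(q-2)}{(d-2)(q+d-3)}$ arises in $\LL_2^{\rm nd}$ from the $-1<c<0$ computation. The remaining cross terms involving $b$ are handled verbatim as in the proof of Theorem \ref{thm2} and yield the $c\sqrt{\delta}$ and $q^2\delta$ pieces of $\LL_i^{\rm nd}$. Part (iii) ($q=2$) follows from the analogous $L^2$-testing computation.

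The main obstacle is the precise bookkeeping of cross terms so that the negative contributions from $\nabla a\cdot\nabla u$ and $b\cdot\nabla u$ combine into a single quadratic form whose positivity is controlled by the stated hypotheses. The sign of $c$ reverses the role of the principal $\nabla a$-term, forcing the case split $c>0$ versus $-1<c<0$; the auxiliary constraints $1+c\bigl(1-\frac{q}{d-2}-\frac{q\sqrt{\delta}}{4}\bigr)\geq 0$ and $1+c\bigl(1+\frac{q\sqrt{\delta}}{4}\bigr)\geq 0$ are exactly the requirements that these quadratic forms be non-negative after completing the square, and the shift $\mu_0$ is introduced to absorb the lower-order $\lambda\delta$-terms. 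Finally, an approximation of $a$ by $a^\varepsilon := I+c|x|_\varepsilon^{-2}x\otimes x$, as in the remark following Theorem \ref{thm3}, justifies the pointwise manipulations, and passing to the limit $\varepsilon\downarrow 0$ (using strong $L^q$-resolvent convergence) closes the argument and delivers $u \in W^{1,q}\cap W^{1,qd/(d-2)}$ with the bounds \eqref{reg_double_star} (respectively $u\in W^{2,2}$ in (iii)).
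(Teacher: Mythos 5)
Your part (i) is essentially the paper's argument: write $-a\cdot\nabla^2+b\cdot\nabla=-\nabla\cdot a\cdot\nabla+(\nabla a+b)\cdot\nabla$, prove $\nabla a\in\mathbf F_{\delta_0}(A)$ with $\sqrt{\delta_0}=2\frac{d-1}{d-2}\frac{c}{1+c}$ for $c>0$ via the Hardy-type inequality $(1+c)\frac{(d-2)^2}{4}\||x|^{-1}h\|_2^2\le\langle\nabla h\cdot a\cdot\nabla \bar h\rangle$, and add the form-bounds; that part is correct.

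There is, however, a genuine gap in your treatment of (ii) and (iii). You propose to test $(\mu+\Lambda_q(a,\tilde b))u=f$ with $|u|^{q-1}\sgn u$. Such a test controls only $\|u\|_q$ and $\|\nabla|u|^{q/2}\|_2$, hence (via Sobolev) $\|u\|_{qd/(d-2)}$; it cannot produce the claimed bounds on $\|\nabla u\|_q$ and $\|\nabla u\|_{qd/(d-2)}$, nor $u\in W^{2,2}$. The estimates \eqref{reg_double_star} require testing with $\phi=-\nabla\cdot(w|w|^{q-2})$, $w=\nabla u$, i.e.\ differentiating the equation; this brings in the commutator $[\nabla_r,A_q^\varepsilon]_-=-\nabla\cdot(\nabla_r a^\varepsilon)\cdot\nabla$ and the quantities $\bar I_{q,\chi}$, $\bar J_{q,\chi}$, $G_{q,\chi^2}$, together with the cross terms $\beta_1,\beta_2$ built from $x\cdot w$ and $x\cdot(x\cdot\nabla w)$ (Lemmas \ref{basic_ineq_lemma} and \ref{basic_ineq_lemma_nondiv}); none of this structure is visible in your sketch. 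Relatedly, your account of where the constants $\theta$ in $\LL^{\rm nd}_1$, $\LL^{\rm nd}_2$ come from is not right: $\theta=\frac{q}{d-2}$ is the optimizer of the quadratic estimate $\beta_1\le c\theta\bar I_{q,\chi}+c\theta^{-1}G_{q,\chi^2}$ once one notes $\bar I_{q,\chi}\ge\frac{(d-2)^2}{q^2}G_{q,\chi^2}$, and $\theta=\frac{1}{4}\frac{q}{d-2}\frac{q-2}{q+d-3}$ is chosen so that the resulting cubic $M(\chi)$ coming from $\beta_2$ satisfies $M(0)=M(1)=\max_{[0,1]}M=0$; neither arises from a weighted Hardy inequality applied to $\int|x|^{-2}|u|^q$. (The inequality you invoke, with the weight $|x|^{-2}$ on both sides, is moreover not scale-invariant and therefore false as stated.) Finally, for (iii) with $c>0$ one must first establish $D(\Lambda_2(a,\nabla a+b))=D(A_2)\subset W^{2,2}$ via the Miyadera perturbation theorem before the $q=2$ identity can be justified (using the regularized test function $\phi_m=-E_m\nabla\cdot w$ and letting $m\to\infty$); your sketch does not address this.
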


\begin{corollary}
\label{cor4}
Let $d \geq 3$, $a(x)=I+c|x|^{-2}x \otimes x$, $b \in \mathbf{F}_\delta$. Assume that 
\begin{align*}
\left\{
\begin{array}{ll}
-\frac{1}{1+\frac{1}{4}\frac{(d-4)^2}{(d-3)(2d-5)}}<c <\frac{d-3}{2}, &\text{ $d \geq 4$}, \\ 
-1<c<\frac{1}{3}, &  \text{ $d=3$}. 
\end{array}
\right. \qquad \text{and $\delta>0$ is sufficiently small,}  
\end{align*}
or 
\begin{equation*}
\text{$|c|$ is sufficiently small and } \delta<1 \wedge \frac{4}{(d-2)^2}.
\end{equation*}
Let $d \geq 4$. Then, for all $q \in ]d-2, d + \frac{2}{d-3}[$ in case of positive $c$,
and for a $q>d-2$ sufficiently close to $d-2$ in case of negative $c$, we have
$$(\mu+\Lambda_q(a,\nabla a + b))^{-1}L^q \subset C^{0,\gamma}, \quad \gamma=1-\frac{d-2}{q}$$
Let $d=3$. Then
\[
(\mu+\Lambda_2(a,\nabla a + b))^{-1}L^2 \subset C^{0,\gamma}, \;\; \gamma=\frac{1}{2}.
\]
\end{corollary}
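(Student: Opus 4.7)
The plan is to deduce the corollary from Theorem \ref{thm4} by combining the $L^{qd/(d-2)}$ gradient bound with the Morrey embedding $W^{1,p}(\mathbb R^d) \hookrightarrow C^{0,1-d/p}$, valid for $p > d$. Since we take $q > d-2$, the target exponent $p = qd/(d-2)$ satisfies $p > d$, and the Hölder exponent comes out to $1 - d/p = 1 - (d-2)/q$, as claimed.

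For $d \geq 4$, I would split according to the sign of $c$ and verify that the hypotheses of Theorem \ref{thm4}(ii) are met under the corollary's assumptions, for some $q \in ]d-2, d + 2/(d-3)[$ when $c>0$, respectively for some $q > d-2$ sufficiently close to $d-2$ when $c<0$. The observation is that $\LL^{\rm nd}_1(c,\delta,q,d)$ and $\LL^{\rm nd}_2(c,\delta,q,d)$ depend jointly continuously on $(q,\delta)$, with all $\delta$-dependent contributions vanishing as $\delta \downarrow 0$. In this limit, for positive $c$ one has $\theta = q/(d-2) \to 1$ and $(d-2)^2/q^2 \to 1$, so the key inequality reduces to $(q-1) - c(1+\theta) > 0$, i.e.\ $c < (d-3)/2$, while the sign condition $1 + c(1 - q/(d-2) - q\sqrt{\delta}/4) \geq 0$ becomes automatic at the endpoint. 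For negative $c$, one gets $\theta \to (d-4)/(4(2d-5))$ and the analogous computation isolates precisely the threshold $-1/[1 + (d-4)^2/(4(d-3)(2d-5))]$ appearing in the statement. In the alternative regime ($|c|$ small and $\delta < 1 \wedge 4/(d-2)^2$), the hypotheses follow by joint continuity in $c$ from the $c = 0$ case, which by the Remark following Theorem \ref{thm2} reduces to the strict bound of \cite[Lemma 5]{KS}.

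For $d = 3$, Theorem \ref{thm4}(ii) does not apply; instead I would invoke Theorem \ref{thm4}(iii) at $q = 2$, verify its two algebraic conditions in each of the corresponding two regimes, and conclude $u \in W^{2,2}(\mathbb R^3)$. The Hölder regularity $u \in C^{0,1/2}$ then follows from the Sobolev embedding $W^{2,2}(\mathbb R^3) \hookrightarrow C^{0,1/2}$ (for instance, by factoring it as $W^{2,2} \hookrightarrow W^{1,6}$ followed by Morrey in dimension three).

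The main technical obstacle is the verification of the $\LL^{\rm nd}_j$ inequalities near the endpoint $q = d-2$, $\delta = 0$: the expressions are long and one must confirm that the $O(\sqrt{\delta})$ and $O(\delta)$ perturbations are simultaneously absorbable by taking $q$ sufficiently close to $d-2$ and $\delta$ small, with the two limits not conflicting. Since the relevant inequalities are strict and all quantities are jointly continuous, this amounts to a routine perturbation check; the genuine content of the corollary is Theorem \ref{thm4} together with the Morrey/Sobolev embedding.
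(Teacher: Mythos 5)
Your proposal is correct and is exactly the route the paper intends (the corollary is stated without proof): specialize Theorem \ref{thm4}(ii) (resp.\ Theorem \ref{thm4}(iii) for $d=3$) and apply $W^{1,qd/(d-2)}\hookrightarrow C^{0,1-(d-2)/q}$ for $q>d-2$ (resp.\ $W^{2,2}(\mathbb R^3)\hookrightarrow C^{0,1/2}$), and your endpoint computations of $\theta$ and of the thresholds $\frac{d-3}{2}$ and $-\bigl(1+\frac{(d-4)^2}{4(d-3)(2d-5)}\bigr)^{-1}$ are the right ones. Two small points: for positive $c$ the corollary asserts the conclusion for \emph{all} $q\in\,]d-2,d+\frac{2}{d-3}[$, not just some, so you should note that at $\delta=0$ the condition $(q-1)\frac{(d-2)^2}{q^2}>\LL_1^{\rm nd}$ amounts to $c<\frac{(q-1)(d-2)}{q+d-2}$, which is increasing in $q$ and hence weakest at $q=d-2$, while the sign condition $1+c(1-\frac{q}{d-2})\geq 0$ is precisely what produces the upper limit $q<d+\frac{2}{d-3}$ when $c$ ranges up to $\frac{d-3}{2}$. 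Also, for $d=3$, $c>0$, the third condition of Theorem \ref{thm4}(iii) at $\delta=0$ reads $1-4c>0$, so the verification you describe only reaches $c<\frac14$ rather than the stated $c<\frac13$ (which is the constraint coming from $\sqrt{\delta_2}<1$ alone); this discrepancy is inherited from the paper's statement and is not a defect of your method.
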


\section{Proof of Theorem \ref{thm1}} 

In what follows, we use notation
$$
\langle h\rangle:=\int_{\mathbb R^d} h(x)dx, \quad \langle h,g\rangle:=\langle h\bar{g}\rangle.
$$

Define $t[u,v]:=\langle \nabla u \cdot a \cdot \nabla \bar{v} \rangle$, $D(t)=W^{1,2}$.
There is a unique self-adjoint operator $A\equiv A_2 \geq 0$ on $L^2$ associated with the form $t$: $D(A) \subset D(t)$, $\langle Au,v\rangle=t[u,v]$, $u \in D(A)$, $v \in D(t)$. $-A$ is the generator of a  positivity preserving $L^\infty$ contraction $C_0$ semigroup $T^t_2 \equiv e^{-tA}$,  $t \geq 0$, on $L^2$. 

By interpolation, $T^t_r:=\big[T^t_2\upharpoonright_{L^r \cap L^2}\big]_{L^r \rightarrow L^r}^{{\rm \clos}}$ determines a $C_0$ semigroup on $L^r$ for all $r \in [2,\infty[$ and hence, by self-adjointness, for all $r \in ]1,\infty[$. The (minus) generator $A_r$ of $T^t_r \,(\equiv e^{-tA_r})$ is the desired operator realization of $\nabla \cdot a \cdot \nabla$ on $L^r$, $r \in ]1,\infty[$. One can furthermore show that $T^t_1:=\big[T^t_2\upharpoonright_{L^1 \cap L^2}\big]_{L^1 \rightarrow L^1}^{{\rm \clos}}$ is a $C_0$ semigroup. This completes the proof of the  assertion (\textit{i}) of the theorem.

To prove (\textit{ii}), we will need the following notation and auxiliary results.  Define the smoothed out matrices $a^\varepsilon=(a^\varepsilon_{ij})$, $1 \leq i,j \leq d$, $\varepsilon>0$ by
$$
a^\varepsilon_{ij}:=\delta_{ij}+c|x|^{-2}_\varepsilon x_ix_j, \quad |x|_\varepsilon:=\sqrt{|x|^2+\varepsilon}.
$$
Set $u \equiv u^\varepsilon = (\mu + A_q^\varepsilon)^{-1} f$, $A_q^\varepsilon:=A_q(a^\varepsilon)$, $0 \leq f \in C_c^1$. Clearly $a^{\varepsilon} \in C^\infty$ and $ 0 \leq u^\varepsilon \in W^{3,q}$. 
Denote $w \equiv w^\varepsilon:=\nabla u^\varepsilon$, 
$$
I_q:=\sum_{r=1}^d\langle (\nabla_r w)^2 |w|^{q-2} \rangle, \quad
J_q:=\langle (\nabla |w|)^2 |w|^{q-2}\rangle,$$
$$
\bar{I}_{q,\chi}:= \langle  \bigl( x \cdot \nabla w \bigr)^2 \chi|x|^{-2}|w|^{q-2}\rangle, \quad
\bar{J}_{q,\chi}:= \langle (x \cdot \nabla |w|)^2 \chi|x|^{-2}  |w|^{q-2}\rangle, \qquad \chi:=|x|^2|x|_\varepsilon^{-2},
$$
$$H_{q,\chi}:=\langle \chi|x|^{-2}|w|^q \rangle, \quad H_{q,\chi^2}:=\langle \chi^2|x|^{-2}|w|^q \rangle, \quad G_{q,\chi^2}:=\langle \chi^2|x|^{-4} (x \cdot w)^2
|w|^{q-2}\rangle.
$$

\textbf{1.~}The following inequality plays an important role in the proof of Theorem \ref{thm1}. 

\begin{lemma}[Hardy-type inequality]
\[
\label{hardy_type_ineq}
\frac{d^2}{4}H_{q,\chi}-(d+2)H_{q,\chi^2} + 3H_{q,\chi^3} \leq \frac{q^2}{4} \bar{J}_{q,\chi}  \tag{$\mathbf{HI}$}
\]
\end{lemma}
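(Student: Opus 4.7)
The first thing I would do is introduce $v:=|w|^{q/2}$. From $\nabla v=\tfrac{q}{2}|w|^{q/2-1}\nabla|w|$ one gets $(x\cdot\nabla v)^{2}=\tfrac{q^{2}}{4}|w|^{q-2}(x\cdot\nabla|w|)^{2}$, hence
$$
\tfrac{q^{2}}{4}\bar J_{q,\chi}=\int_{\mathbb{R}^{d}}\chi(\hat x\cdot\nabla v)^{2}\,dx,\qquad H_{q,\chi^{k}}=\int_{\mathbb{R}^{d}}\chi^{k}|x|^{-2}v^{2}\,dx\quad(k=1,2,3),
$$
with $\hat x:=x/|x|$. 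So (HI) is equivalent to the scalar inequality
$$
\int\chi(\hat x\cdot\nabla v)^{2}\,dx\;\geq\;\tfrac{d^{2}}{4}\int\chi v^{2}|x|^{-2}\,dx-(d+2)\int\chi^{2}v^{2}|x|^{-2}\,dx+3\int\chi^{3}v^{2}|x|^{-2}\,dx.
$$
Since $u\equiv u^{\varepsilon}\in W^{3,q}$, the regularity of $v$ is standard (e.g.\ replace $|w|$ by $\sqrt{|w|^{2}+\eta}$ and let $\eta\downarrow 0$ at the end), and I would not dwell on it.

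\textbf{Square completion.} Next, for a radial weight $\phi=\phi(r)$, $r=|x|$, to be chosen, I would start from the trivial identity
$$
0\;\leq\;\int\chi\big[(\hat x\cdot\nabla v)+\phi\,v\big]^{2}\,dx=\int\chi(\hat x\cdot\nabla v)^{2}\,dx+\int 2\chi\phi v\,(\hat x\cdot\nabla v)\,dx+\int\chi\phi^{2}v^{2}\,dx.
$$
Rewriting the cross-term as $\int(\hat x\cdot\nabla v^{2})\chi\phi\,dx$ and integrating by parts in the radial direction (using $\mathrm{div}(\chi\phi\hat x)=(\chi\phi)'+(d-1)\chi\phi/r$ because $\chi\phi$ is radial) gives
$$
\int 2\chi\phi v(\hat x\cdot\nabla v)\,dx=-\int v^{2}\big[(\chi\phi)'+(d-1)\chi\phi/r\big]\,dx,
$$
with no boundary contribution. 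The essential identity, which drives the whole computation, is
$$
\chi'(r)=\tfrac{2(\chi-\chi^{2})}{r},
$$
immediate from $\chi=r^{2}/(r^{2}+\varepsilon)$; this is the mollification correction that turns the classical Hardy identity into a cascade in powers of $\chi$.

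\textbf{Choice of ansatz and coefficient matching.} Finally I would try the two-parameter ansatz $\phi(r)=c_{1}/r+c_{2}\chi/r$. Because each derivative of $\chi$ produces only $\chi$- and $\chi^{2}$-terms, after substitution the coefficient of $v^{2}/r^{2}$ in the integrand becomes a cubic polynomial in $\chi$; an elementary calculation shows that the coefficients of $\chi,\chi^{2},\chi^{3}$ equal
$$
c_{1}d-c_{1}^{2},\qquad (d+2)c_{2}-2c_{1}-2c_{1}c_{2},\qquad -4c_{2}-c_{2}^{2},
$$
respectively. The choice $c_{1}=d/2$, $c_{2}=-1$ (so $\phi=\tfrac{d}{2r}-\tfrac{\chi}{r}$) makes these exactly $d^{2}/4$, $-(d+2)$, $3$, i.e., the RHS of (HI). This closes the proof.

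\textbf{Main obstacle.} Each step is purely computational; the only non-obvious input is the form of $\phi$. Two parameters turn out to be forced: (HI) involves three moments $H_{q,\chi^{k}}$, but the relation $\chi'=\tfrac{2(\chi-\chi^{2})}{r}$ ties consecutive powers of $\chi$ together and reduces the number of free corrector parameters from three to two. As a sanity check, in the formal limit $\varepsilon\downarrow 0$ (so $\chi\to 1$), (HI) collapses to the classical Hardy inequality for $v=|w|^{q/2}$ with sharp constant $(d-2)^{2}/4$, and indeed $\phi\big|_{\chi=1}=(d-2)/(2r)$, the classical Hardy weight.
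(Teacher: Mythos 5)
Your proof is correct, and the coefficient bookkeeping checks out: with $\phi=\frac{d}{2r}-\frac{\chi}{r}$ the three coefficients $c_1d-c_1^2$, $(d+2)c_2-2c_1-2c_1c_2$, $-4c_2-c_2^2$ do evaluate to $\frac{d^2}{4}$, $-(d+2)$, $3$, and your sanity check $\frac{d^2}{4}-(d+2)+3=\frac{(d-2)^2}{4}$ confirms consistency with the classical Hardy inequality. The route is, however, organized differently from the paper's. The paper substitutes $F:=|x|_\varepsilon^{-1}|w|^{q/2}$, so that $|x|_\varepsilon^{-1}x\cdot\nabla|w|^{q/2}=x\cdot\nabla F+\chi F$, expands the square exactly (computing the cross term by parts via the same identity $x\cdot\nabla\chi=2\chi(1-\chi)$ that you use), and then discards nothing except invoking the single operator inequality $\|x\cdot\nabla F\|_2\geq\frac{d}{2}\|F\|_2$, which it proves separately from the spectral theorem applied to the self-adjoint dilation generator $\mathcal D=\frac{\sqrt{-1}}{2}(x\cdot\nabla+\nabla\cdot x)$. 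Your argument bundles that last step into the square completion itself: the term $c_1/r$ in your corrector plays exactly the role of the ground-state substitution that underlies the dilation inequality, so your proof is entirely self-contained and elementary, at the cost of having to guess the two-parameter ansatz; the paper's version isolates a reusable sharp inequality (which it needs again in Lemma \ref{nabla_a_lem}) and makes the loss of sharpness occur in exactly one identifiable place. Both yield the same constants, as they must, since your $\phi$ and the paper's factor $|x|_\varepsilon^{-1}$ encode the same weight.
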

\begin{proof}
Set $F:=|x|_\varepsilon^{-1}|w|^\frac{q}{2}$.
Then
$$
\frac{q^2}{4}\bar{J}_{q,\chi} = \bigl\langle \bigl(|x|_\varepsilon^{-1}x \cdot \nabla |w|^{\frac{q}{2}}\bigr)^2\bigr\rangle = \bigl\langle (x \cdot \nabla F + \chi F)^2\bigr\rangle = \langle (x \cdot \nabla F)^2 \rangle + \langle \chi^2 F^2 \rangle + 2\langle x \cdot \nabla F,\chi F\rangle.
$$
\eqref{hardy_type_ineq} follows from the inequality $\langle (x \cdot \nabla F)^2 \rangle \equiv\|x\cdot \nabla F \|_2^2 \geq \frac{d^2}{4}\|F\|_2^2 \equiv \frac{d^2}{4} H_{q,\chi}$ and the equalities
 $$2 \langle x \cdot \nabla F,\chi F\rangle=-d\langle \chi F^2\rangle - \langle F^2, x \cdot \nabla \chi\rangle,  \quad x \cdot \nabla \chi = 2 \bigl(\frac{|x|^2}{|x|_\varepsilon^2} - \frac{|x|^4}{|x|_\varepsilon^4}\bigr)=2\chi (1-\chi).$$
\end{proof}

The following equalities are crucial steps in the proof of Theorem \ref{thm1}. 

\begin{lemma}[The basic equalities]
\label{basic_ineq_lemma}
\begin{align}
&\mu \langle |w|^q \rangle + I_q + c\bar{I}_{q,\chi}+ (q-2)(J_q + c\bar{J}_{q,\chi}) 
 - c\biggl(1+(q-2)\frac{d}{q}\biggr)H_{q,\chi} + 2c(d-1)G_{q,\chi^2} \notag \\
&+2c\frac{q-2}{q} H_{q,\chi^2} + 8c\varepsilon \langle |x|_\varepsilon^{-6} (x \cdot w)^2 |w|^{q-2}\rangle =  \beta_1 + \langle f, \phi \rangle
\tag{${\rm BE}_+$},
\label{be_+}
\end{align}
\begin{align}
&\mu \langle |w|^q \rangle + I_q^{} + c\bar{I}_{q,\chi}^{} + (q-2)(J_q^{} + c\bar{J}_{q,\chi}^{})  - c\biggl(1+(q-2)\frac{d}{q}\biggr)H_{q,\chi}^{} + cd G_{q,\chi^2}^{} \notag \\
&+2c\frac{q-2}{q}H_{q,\chi^2} + 4c\varepsilon \langle |x|_\varepsilon^{-6} (x \cdot w)^2 |w|^{q-2}\rangle = - \frac{1}{2}\beta_2^{}+ \langle f, \phi^{} \rangle,
\tag{${\rm BE}_-$}
\label{be_-}
\end{align}
where $\phi^{}=- \nabla \cdot (w |w|^{q-2})$,
$$
\beta_1^{}:=-2c\langle |x|_\varepsilon^{-4} x \cdot w, x \cdot (x \cdot \nabla w)|w|^{q-2}\rangle, \qquad \beta_2^{}:=-2c(q-2)\langle |x|_\varepsilon^{-4} (x \cdot w)^2 x \cdot \nabla |w|,|w|^{q-3} \rangle.
$$
\end{lemma}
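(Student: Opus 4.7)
The plan is a Bernstein-type computation. Apply $\partial_r$ to the regularized resolvent equation $(\mu+A_q^\varepsilon)u^\varepsilon=f$, i.e., $\mu u^\varepsilon-\Delta u^\varepsilon-c\,\nabla\cdot V^\varepsilon=f$ with $V^\varepsilon:=|x|_\varepsilon^{-2}x(x\cdot w)$; then multiply by $w_r|w|^{q-2}$, sum over $r$, and integrate. Since $a^\varepsilon\in C^\infty$ with $f\in C_c^1$, elliptic regularity gives $u^\varepsilon\in W^{3,q}$, which justifies every integration by parts below.

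Standard IBP converts the Laplacian piece into $I_q+(q-2)J_q$ and the source into $\langle f,\phi\rangle$, so what remains is the perturbation piece $-c\sum_r\langle\partial_r\nabla\cdot V^\varepsilon,w_r|w|^{q-2}\rangle$. One IBP rewrites it as $c\sum_{r,j}\langle\partial_r V_j^\varepsilon,\partial_j(w_r|w|^{q-2})\rangle = cS_1+c(q-2)S_2$, where $S_1:=\sum_{r,j}\langle\partial_rV_j^\varepsilon,\partial_jw_r|w|^{q-2}\rangle$ and $S_2:=\sum_{r,j}\langle\partial_rV_j^\varepsilon w_r|w|^{q-3}\partial_j|w|\rangle$. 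Expanding
\[
\partial_r V_j^\varepsilon = |x|_\varepsilon^{-2}\bigl[-2|x|_\varepsilon^{-2}x_rx_j(x\cdot w)+\delta_{jr}(x\cdot w)+x_jw_r+x_jx_k\partial_rw_k\bigr]
\]
and using the symmetry $\partial_rw_k=\partial_kw_r$, the four summands contribute to $c(S_1+(q-2)S_2)$ respectively as: $\beta_1+\beta_2$ straight from the first; $2cG_{q,\chi^2}+c(\tfrac{d}{q}-1)H_{q,\chi}-\tfrac{2c}{q}H_{q,\chi^2}$ from the second, after a further IBP shifting $\Delta u=\nabla\cdot w$ onto the weight (the $(q-2)$-residual involving $w\cdot\nabla|w|$ cancels exactly against the companion $\partial_j|w|$-contribution to $S_2$); $-\tfrac{c(q-1)}{q}\bigl(dH_{q,\chi}-2H_{q,\chi^2}\bigr)$ from the third, via $\nabla\cdot(|x|_\varepsilon^{-2}x)=(d-2\chi)|x|_\varepsilon^{-2}$; and $c\bar I_{q,\chi}+c(q-2)\bar J_{q,\chi}$ from the fourth. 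Collecting yields the master identity
\begin{equation*}
\mu\langle|w|^q\rangle + I_q + c\bar I_{q,\chi} + (q-2)(J_q + c\bar J_{q,\chi}) - c\bigl(1+(q-2)\tfrac{d}{q}\bigr)H_{q,\chi} + 2cG_{q,\chi^2} + \tfrac{2c(q-2)}{q}H_{q,\chi^2} + \beta_1+\beta_2 = \langle f,\phi\rangle.
\end{equation*}

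To split this into \eqref{be_+} and \eqref{be_-} it remains to relate $\beta_1$ and $\beta_2$. Using $x\cdot(x\cdot\nabla w)=x\cdot\nabla(x\cdot w)-(x\cdot w)$ inside $\beta_1$, integrating by parts the operator $x\cdot\nabla$ acting on $(x\cdot w)^2$ against the weight $|x|_\varepsilon^{-4}|w|^{q-2}$, and invoking $\nabla\cdot(|x|_\varepsilon^{-4}x)=(d-4\chi)|x|_\varepsilon^{-4}$ together with $\chi|x|_\varepsilon^{-4}=|x|_\varepsilon^{-4}-\varepsilon|x|_\varepsilon^{-6}$ (which is precisely what generates the $\varepsilon$-correction), one arrives at
\[
\beta_1=(d-2)cG_{q,\chi^2}+4c\varepsilon\langle|x|_\varepsilon^{-6}(x\cdot w)^2|w|^{q-2}\rangle-\tfrac12\beta_2.
\]
Rewriting $\beta_1+\beta_2=2(d-2)cG_{q,\chi^2}+8c\varepsilon\langle\cdots\rangle-\beta_1$ in the master identity and transferring $\beta_1$ to the right-hand side delivers \eqref{be_+}; inserting the dual form $\beta_1+\beta_2=(d-2)cG_{q,\chi^2}+4c\varepsilon\langle\cdots\rangle+\tfrac12\beta_2$ and keeping $-\tfrac12\beta_2$ on the right delivers \eqref{be_-}. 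The main obstacle is the bookkeeping in the master computation—particularly the cancellation in the $\delta_{jr}(x\cdot w)$-summand and the exact tracking of the $H_{q,\chi}$ and $H_{q,\chi^2}$ coefficients—together with the careful handling of the $\varepsilon$-correction in the $\beta_1$--$\beta_2$ relation; everything else is mechanical.
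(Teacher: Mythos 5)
Your proposal is correct and follows essentially the same route as the paper: differentiate the regularized resolvent equation, test against $w|w|^{q-2}$, integrate by parts (the paper organizes the extra term via the commutator $[\nabla,A_q^\varepsilon]_-$, you via the four summands of $\partial_r V_j^\varepsilon$, but the bookkeeping is equivalent), and then use the integration-by-parts identity $\beta_1=-\tfrac12\beta_2+c(d-2)G_{q,\chi^2}+4c\varepsilon\langle|x|_\varepsilon^{-6}(x\cdot w)^2|w|^{q-2}\rangle$ to produce the two representations. Your master identity and all reported coefficients (including the cancellation of the $(q-2)$-residual in the $\delta_{jr}(x\cdot w)$ piece and the $H_{q,\chi}$, $H_{q,\chi^2}$ totals) check out against the paper's computation.
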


\begin{remark*}
Below we use the representation \eqref{be_+} in case $c>0$, and the representation \eqref{be_-} in case $c<0$. (One could still use \eqref{be_+} for $c<0$ or \eqref{be_-} for $c>0$, but this would lead to more restrictive constraints on $c$.)
\end{remark*}

\begin{proof}[Proof of Lemma \ref{basic_ineq_lemma}]
Set $[F,G]_-:=FG-GF$.
We multiply $\mu u^{} + A_q^\varepsilon u^{}= f $ by $\phi^{}$ and integrate:
\begin{equation*}
\mu \langle |w|^q \rangle +\langle A_q^\varepsilon w^{}, w|w|^{q-2} \rangle + \langle [\nabla,A_q^\varepsilon]_-u^{}, w|w|^{q-2}\rangle = \langle f, \phi^{} \rangle,
\end{equation*}
\begin{equation}
\label{be_with_commutator}
\mu \langle |w|^q \rangle + I_q^{} + c\bar{I}_{q,\chi}^{} + (q-2)(J_q + c\bar{J}_{q,\chi}^{})+ \langle [\nabla,A_q^\varepsilon]_-u^{}, w|w|^{q-2}\rangle = \langle  f, \phi^{} \rangle.
\end{equation}
The term to evaluate: $\langle [\nabla,A_q^\varepsilon]_- u^{},w|w|^{q-2}\rangle \equiv \langle [\nabla_r,A_q^\varepsilon]_- u^{},w_r|w|^{q-2}\rangle:=\sum_{r=1}^d\langle [\nabla_r,A_q^\varepsilon]_- u^{},w_r|w|^{q-2}\rangle$.
Note that 
$$[\nabla_r,A_q^\varepsilon]_-=-\nabla \cdot (\nabla_ra^\varepsilon) \cdot \nabla, \qquad (\nabla_r a^\varepsilon)_{ik} = c|x|_\varepsilon^{-2}\delta_{ri}x_k + c(|x|_\varepsilon^{-2}\delta_{rk}x_i - 2|x|_\varepsilon^{-4}x_i x_k x_r),$$
\begin{align*}
&\langle [\nabla_r,A_q^\varepsilon]_- u^{},w_r|w|^{q-2}\rangle \\
&= -c\langle w_k\nabla_i(|x|_\varepsilon^{-2}\delta_{ri}x_k)+|x|_\varepsilon^{-2}\delta_{ri}x_k\nabla_i w_k,w_r|w|^{q-2}\rangle
+c\langle(|x|_\varepsilon^{-2}\delta_{rk}x_i-2|x|_\varepsilon^{-4}x_ix_kx_r)w_k,\nabla_i(w_r|w|^{q-2})\rangle\\
&=:\alpha_1^{} + \alpha_2^{},
\end{align*}
\begin{align*}
&\alpha_1^{}=-c\langle (|x|_\varepsilon^{-2}\delta_{rk}-2|x|_\varepsilon^{-4}\delta_{ri}x_kx_r)w_k + |x|_\varepsilon^{-2}x \cdot \nabla w_r,w_r|w|^{q-2}\rangle \\
&=-c \langle |x|_\varepsilon^{-2}|w|^q\rangle + 2c\langle |x|_\varepsilon^{-4} (x \cdot w)^2 |w|^{q-2}\rangle - c\langle |x|_\varepsilon^{-2}x\cdot\nabla|w|,|w|^{q-1} \rangle.
\end{align*}
Then
$$
\alpha_1^{}=-c\biggl(1-\frac{d-2}{q}\biggr)H_{q,\chi}^{} + 2cG_{q,\chi^2}^{} + 2\frac{c}{q}\varepsilon \langle |x|_\varepsilon^{-4}|w|^q\rangle
$$
due to $\langle |x|_\varepsilon^{-2}x\cdot\nabla|w|,|w|^{q-1} \rangle=\frac{1}{q}\langle |x|_\varepsilon^{-2}x \cdot \nabla |w|^q\rangle=-\frac{1}{q}\langle |w|^q \nabla \cdot (x|x|_\varepsilon^{-2})\rangle=-\frac{d}{q}H_{q,\chi}^{} + \frac{2}{q}\langle|x|^2|x|_\varepsilon^{-4}|w|^{q}\rangle=-\frac{d-2}{q}H_{q,\chi}^{} -\frac{2}{q}\varepsilon \langle |x|_\varepsilon^{-4}|w|^q\rangle$, and
$$
\alpha_2^{}=c \langle |x|_\varepsilon^{-2}w,x \cdot \nabla (w|w|^{q-2}) \rangle - 2c\langle |x|_\varepsilon^{-4} x \cdot w, x \cdot (x \cdot \nabla(w|w|^{q-2})) \rangle.
$$
Then
\begin{align*}
\alpha_2^{}& =\beta_1^{} + \beta_2^{} + c \langle |x|_\varepsilon^{-2} x \cdot \nabla|w|,|w|^{q-1}\rangle + c(q-2)\langle |x|_\varepsilon^{-2} x \cdot \nabla |w|, |w|^{q-1}\rangle \\
&=\beta_1^{} + \beta_2^{} + c(q-1)\biggl(\frac{d-2}{q}H_{q,\chi}^{} + \frac{2}{q}\varepsilon \langle |x|_\varepsilon^{-4},|w|^{q} \rangle \biggr).
\end{align*}
In view of 
$$
\beta_1^{}=-\frac{1}{2}\beta_2^{} + c(d-2)G_{q,\chi^2}^{} + 4c\varepsilon\langle |x|_\varepsilon^{-6} (x \cdot w)^2 |w|^{q-2} \rangle,
$$
we rewrite $\alpha_1^{} + \alpha_2^{} = \langle [\nabla, A_q^\varepsilon]_- u^{},w|w|^{q-2}\rangle$ in two ways:
\begin{align*}
&\langle [\nabla,A_q^\varepsilon]_- u^{}, w|w|^{q-2}\rangle = -\beta_1^{} - c\biggl(1+(q-2)\frac{d-2}{q}\biggr)H_{q,\chi}^{} + 2c(d-1)G_{q,\chi^2}^{} \notag \\
&-2c\frac{q-2}{q}\varepsilon \langle |x|_\varepsilon^{-4}|w|^q\rangle + 8c\varepsilon \langle |x|_\varepsilon^{-6} (x \cdot w)^2 |w|^{q-2}\rangle 
\end{align*}
and
\begin{align*}
&\langle [\nabla,A_q^\varepsilon]_- u^{}, w|w|^{q-2}\rangle = \frac{1}{2}\beta_2 - c\biggl(1+(q-2)\frac{d-2}{q}\biggr)H_{q,\chi}^{} + cd G_{q,\chi^2}^{} \notag \\
&-2c\frac{q-2}{q}\varepsilon \langle |x|_\varepsilon^{-4}|w|^q\rangle + 4c\varepsilon \langle |x|_\varepsilon^{-6} (x \cdot w)^2 |w|^{q-2}\rangle. 
\end{align*}
The last two identities applied in \eqref{be_with_commutator} yield \eqref{be_+}, \eqref{be_-}.
\end{proof}

\smallskip

\textbf{2.~}Next, we estimate from above the term $\langle f, \phi \rangle$ in the right-hand side of \eqref{be_+}, \eqref{be_-}.
\begin{lemma} 
\label{f_est_lem}
For each $\varepsilon_0>0$ there exists a constant $C(\varepsilon_0)<\infty$  such that 
$$
\langle f, \phi \rangle \leq \varepsilon_0 (I_{q}^{} +  J^{}_{q} + H_q^{}) + C(\varepsilon_0) \|w\|_q^{q-2} \|f\|^2_q,
$$
where $H_q:=\langle |x|^{-2}|w|^q\rangle.$
\end{lemma}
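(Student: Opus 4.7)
The plan is to bound $|\phi|$ pointwise and then apply Young's and Hölder's inequalities. Since $u=u^\varepsilon\in W^{3,q}$, the product rule gives
\[
\phi = -(\Delta u)\,|w|^{q-2} - (q-2)\,|w|^{q-3}\,w\cdot\nabla|w|
\]
a.e.; Cauchy-Schwarz applied to $\Delta u=\sum_i \nabla_i w_i$ yields $|\Delta u|\leq\sqrt{d}\,|\nabla w|$, where $|\nabla w|^2:=\sum_{i,k}(\nabla_i w_k)^2$, so
\[
|\phi| \leq \sqrt{d}\,|\nabla w|\,|w|^{q-2} + (q-2)\,|\nabla|w||\,|w|^{q-2}.
\]
Note that $\int |\nabla w|^2 |w|^{q-2}\,dx = I_q$ and $\int (\nabla|w|)^2|w|^{q-2}\,dx = J_q$.

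For each of the two contributions to $\int |f\phi|\,dx$, I would write the integrand as the product $\bigl(|\text{gradient}|\cdot|w|^{(q-2)/2}\bigr)\cdot\bigl(|f|\cdot|w|^{(q-2)/2}\bigr)$ and apply Young's inequality $ab\leq \varepsilon_0'a^2+(4\varepsilon_0')^{-1}b^2$. Integrating the squared gradient factor yields $I_q$ or $J_q$ respectively, while Hölder's inequality with exponents $q/2$ and $q/(q-2)$ on the squared $|f|$ factor gives
\[
\int f^2\,|w|^{q-2}\,dx \leq \|f\|_q^2\,\|w\|_q^{q-2}.
\]
Choosing $\varepsilon_0'$ proportional to $\varepsilon_0/(\sqrt{d}+q-2)$ delivers the stated bound; the $\varepsilon_0 H_q$ summand on the right is free since $H_q\geq 0$ (alternatively it can be produced from the Hardy inequality $H_q \leq (q/(d-2))^2 J_q$ obtained by testing the classical weighted Hardy inequality on $g=|w|^{q/2}$).

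There is no real obstacle. The lemma is a purely technical energy estimate whose role is to absorb $\langle f,\phi\rangle$ from the right-hand sides of the basic equalities (\ref{be_+}), (\ref{be_-}) into the coercive $I_q,J_q$ terms on the left, leaving a residual $\|f\|_q^2\,\|w\|_q^{q-2}$ that will be closed by a Hölder-type bootstrap in the subsequent steps of the proof of Theorem \ref{thm1}.
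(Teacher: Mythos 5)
Your proof is correct, but it handles the key term differently from the paper. Both arguments begin with the same decomposition $\langle f,\phi\rangle=\langle f,(-\Delta u)|w|^{q-2}\rangle-(q-2)\langle f,|w|^{q-3}w\cdot\nabla|w|\rangle=:F_1+F_2$, and both treat $F_2$ by the same Cauchy--Schwarz/Young/H\"{o}lder chain ending in $\varepsilon_0 J_q+C(\varepsilon_0)\|w\|_q^{q-2}\|f\|_q^2$. The difference is in $F_1$: you bound $|\Delta u|\leq\sqrt{d}\,|\nabla w|$ pointwise and absorb directly into $\varepsilon_0 I_q$, whereas the paper substitutes the resolvent equation $-\Delta u=\nabla\cdot(a^\varepsilon-I)\cdot w-\mu u+f$ and then estimates the resulting first-order terms $\langle|x|_\varepsilon^{-2}x\cdot w,|w|^{q-2}f\rangle$, $\langle|x|_\varepsilon^{-2}x\cdot(x\cdot\nabla w),|w|^{q-2}f\rangle$, etc., against $H_q^{1/2}$, $\bar I_{q,\chi}^{1/2}$ and $\|w\|_q^{(q-2)/2}\|f\|_q$ --- which is precisely why the summand $\varepsilon_0 H_q$ appears in the statement at all. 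Your route is more elementary and in fact yields the slightly stronger bound without $H_q$ (adding the nonnegative $\varepsilon_0 H_q$ afterwards is harmless, as you note, and your alternative derivation of $H_q\leq(q/(d-2))^2J_q$ via Hardy applied to $|w|^{q/2}$ is also the inequality the paper uses elsewhere); since the lemma is only ever invoked with $\varepsilon_0$ taken small enough that the $\varepsilon_0(I_q+J_q+H_q)$ block is discarded, the dimensional constant $\sqrt{d}$ you pick up costs nothing. The one point worth stating explicitly in your write-up is the estimate $|w\cdot\nabla|w||\leq|w|\,|\nabla|w||$ used to reduce $F_2$ to $J_q$, but that is immediate.
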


\begin{proof}[Proof of Lemma \ref{f_est_lem}]
Clearly,
\begin{align*}
 \langle f, \phi^{} \rangle & =\langle f, (- \Delta u)|w|^{q-2}\rangle - (q-2) \langle f, |w|^{q-3} w \cdot \nabla |w|\rangle=:F_1+F_2.
\end{align*}
Since $-\Delta u=\nabla \cdot (a^\varepsilon-1)\cdot w -\mu u  +f$
and 
\begin{align*}
F_1& =\langle \nabla \cdot (a^\varepsilon-1)\cdot w, |w|^{q-2}f\rangle  + \langle (-\mu u  +f),|w|^{q-2}f \rangle \\
& (\text{we expand the first term using $\nabla a^\varepsilon=c(d+1)x|x|_\varepsilon^{-2}-2c |x|^2|x|_\varepsilon^{-4}x$}) \\
& = c(d+1)\langle|x|_\varepsilon^{-2}x\cdot w, |w|^{q-2}f\rangle - 2c\langle 
\chi|x|_\varepsilon^{-2}x\cdot w, |w|^{q-2}(-b\cdot w + f) \rangle \\
& + c \langle |x|_\varepsilon^{-2} x \cdot (x \cdot \nabla w), |w|^{q-2}f \rangle 
 + \langle (-\mu u +f),|w|^{q-2}f \rangle.
\end{align*}

We bound from above $F_1$ and $F_2$  by applying consecutively the following estimates:

\smallskip

1) $\langle|x|_\varepsilon^{-2}x\cdot w, |w|^{q-2}f \rangle \leq H_{q}^{\frac{1}{2}} \|w\|_q^\frac{q-2}{2} \|f\|_q$.

2) $\langle\chi |x|_\varepsilon^{-2}x\cdot w, |w|^{q-2}f \rangle \leq H_{q}^{\frac{1}{2}} \|w\|_q^\frac{q-2}{2} \|f\|_q$.

3) $\langle |x|_\varepsilon^{-2} x \cdot (x \cdot \nabla w), |w|^{q-2} f \rangle \leq (\bar{I}^{}_{q,\chi})^{\frac{1}{2}}\|w\|_q^\frac{q-2}{2} \|f\|_q.$

\smallskip

4) $\langle -f, |w|^{q-2} \mu u \rangle \leq 0.$

\smallskip

5) $ \langle f, |w|^{q-2} f \rangle \leq \|w\|_q^{q-2} \|f\|_q^2 .$

6) $ (q-2) \langle -f, |w|^{q-3} w \cdot \nabla |w| \rangle \leq (q-2) J_{q}^\frac{1}{2}\|w\|_q^\frac{q-2}{2} \|f\|_q .$

\smallskip

1)-6) and the standard quadratic estimates now yield the lemma.
\end{proof}

We choose $\varepsilon_0>0$ in Lemma \ref{f_est_lem} so small that in the estimates below we can ignore 
$\varepsilon_0 (I_{q}^{} +  J^{}_{q} + H_q^{})$.

\medskip

\textbf{3.}~We will use \eqref{be_+}, \eqref{be_-} and Lemma \ref{f_est_lem} to prove the following inequality
\begin{equation}
\label{req_est4}
\mu \langle |w|^q \rangle + \eta J_q^{} \leq C\|w\|_q^{q-2} \|f\|^2_q, \qquad C=C(\varepsilon_0)
\end{equation}
for some $\eta=\eta(q,d,\varepsilon_0)>0$.

\medskip

\textbf{\textit{Case $c > 0$}.\;}In \eqref{be_+} we omit the term $8c\varepsilon\langle |x|_\varepsilon^{-6} (x \cdot w)^2 |w|^{q-2}\rangle$, 
obtaining
\begin{align*}
&\mu \langle |w|^q \rangle + I_q + c\bar{I}_{q,\chi}+ (q-2)(J_q + c\bar{J}_{q,\chi}) 
 - c\biggl(1+\frac{q-2}{q}d\biggr)H_{q,\chi} + 2c(d-1)G_{q,\chi^2} \notag \\
&+2c\frac{q-2}{q} H_{q,\chi^2} \leq \beta_1 + \langle f, \phi \rangle.
\end{align*}
Estimating $\beta_1$ from above using the standard quadratic estimates,
$$\beta_1^{} \leq 2c\langle |x|_\varepsilon^{-4} (x\cdot w)^2|w|^{q-2}\rangle^{\frac{1}{2}} \langle |x|_\varepsilon^{-4} |x|^2 (x \cdot \nabla w)^2 |w|^{q-2}\rangle^{\frac{1}{2}} \leq 2c (G_{q,\chi^2}^{} \bar{I}_{q,\chi}^{})^{\frac{1}{2}} \leq c\theta \bar{I}_{q,\chi}^{} + c\theta^{-1} G_{q,\chi^2}^{}$$ 
($\theta>0$), and then applying Lemma \ref{f_est_lem}, we have
\begin{align*}
&\mu \langle |w|^q \rangle + I_q + c(1-\theta)\bar{I}_{q,\chi}+ (q-2)(J_q + c\bar{J}_{q,\chi}) 
 - c\biggl(1+\frac{q-2}{q}d\biggr)H_{q,\chi} + c\big(2(d-1)- \theta^{-1} \big)G_{q,\chi^2} \notag \\
&+2c\frac{q-2}{q} H_{q,\chi^2}\leq C\|w\|_q^{q-2} \|f\|^2_q.
\end{align*}
Let $0<\theta <1$. Using the inequalities
$J_q^{} \leq I_q$, $\bar{J}_{q,\chi} \leq \bar{I}_{q,\chi}$ and $\frac{4}{q^2}\left(\frac{d^2}{4}H_{q,\chi}^{}-(d+2)H_{q,\chi^2}^{} + 3H_{q,\chi^3}^{} \right) \leq \bar{J}_{q,\chi}$, see \eqref{hardy_type_ineq},
we have
\begin{align*}
\mu \langle |w|^q \rangle 
 + \eta J_q^{}  + (-\eta+q-1)J_q +  \bigl[2c(d-1)- c\theta^{-1} \bigr]G_{q,\chi^2} + c \langle M(\chi)|x|^{-2}|w|^{q}\rangle   \leq C\|w\|_q^{q-2} \|f\|^2_q,
\end{align*}
where
$$
M(\chi):=\bigg[\bigl(q-1-\theta\bigr)\frac{4}{q^2}\biggl(\frac{d^2}{4}-(d+2)
\chi+3\chi^2\biggr) - \biggl(1+\frac{q-2}{q}d\biggr) + 2\frac{q-2}{q} \chi\bigg]\chi,
$$
i.e.
\[
M(\chi):=[\mathfrak a\chi^2 + \mathfrak b\chi+\mathfrak c_0]\chi,
\]
where
\[
\mathfrak a=\frac{12}{q^2}(q-1-\theta), \;\; \mathfrak b=-4(q-1-\theta)\frac{d+2}{q^2} + 2\frac{q-2}{q}, \;\; \mathfrak c_0=\frac{d^2}{q^2}(q-1-\theta)+\frac{2d}{q}-1-d .
\]
Elementary arguments show that the choice $\theta:=\frac{1}{2(d-1)}$ is the best possible. In particular,
\[
\min_{0\leq t \leq 1} M(t)= M(1)<0.
\] 
Since $-\eta+q-1>0$ for all $\eta>0$ sufficiently small, we can use $J_q \geq \frac{(d-2)^2}{q^2}H_q$, obtaining
\begin{align}
\mu\langle |w|^q \rangle  + \eta J_q^{}  + \biggl((-\eta+q-1)\frac{(d-2)^2}{q^2} +c M(1)\biggr)H_q \leq C\|w\|_q^{q-2} \|f\|^2_q, \notag 
\end{align}
Recalling the assumption $(q-1)\frac{(d-2)^2}{q^2} - c \ell_1>0$, $\ell_1= - M(1)$, it is seen that there exists $\eta >0$ such that $(-\eta+q-1)\frac{(d-2)^2}{q^2} \geq c \ell_1$. \eqref{req_est4} is proved for $0<c<\frac{(q-1)(d-2)^2}{q^2 \ell_1}.$

The choice of $\theta \in [1,1+c^{-1}]$ leads to sub-optimal constraints on $c$ and $q$.

\smallskip

\textit{\textbf{Case $-1<c<0$.}}~Set $s:=|c|$. In \eqref{be_-}, we estimate ($\theta>0$)
$$
|\beta_2^{}| \leq 2s(q-2)\bigl(\theta \bar{J}_{q,\chi}^{}  + 4^{-1}\theta^{-1} G_{q,\chi^2}^{}\bigr).
$$
By \eqref{be_-} and Lemma \ref{f_est_lem},
\begin{align*}
&\mu \langle |w|^q\rangle + I_q^{} - s\bar{I}_{q,\chi}^{} + (q-2)(J_q^{} - s(1+\theta) \bar{J}_{q,\chi}^{}) + s\left(1+(q-2)\frac{d}{q} \right)H_{q,\chi}^{}  \\
&-2s\frac{q-2}{q}H_{q,\chi^2}^{} - sd G_{q,\chi^2}^{} -4s G_{q,\chi^2}^{}  + 4sG_{q,\chi^3}^{}  - s(q-2)\frac{1}{4\theta}G_{q,\chi^2}^{}\leq   C\|w\|_q^{q-2} \|f\|^2_q. \notag
\end{align*}
Clearly, $I_q^{} - s\bar{I}_{q,\chi}^{} + (q-2)(J_q^{} - s(1+\theta) \bar{J}_{q,\chi}^{}) \geq (q-1-s-s(q-2)(1+\theta))J_q $.
 Therefore
\begin{align*}
&\mu \langle |w|^q\rangle + \bigl(q-1-s - s(q-2)(1+\theta)\bigr)J_q^{}
+s\left(1+(q-2)\frac{d}{q} \right)H_{q,\chi}^{} \\
&-2s\frac{q-2}{q}H_{q,\chi^2}^{} - sd G_{q,\chi^2}^{} -4s G_{q,\chi^2}^{}  + 4sG_{q,\chi^3}^{} - s(q-2)\frac{1}{4\theta}G_{q,\chi^2}^{}\leq  C\|w\|_q^{q-2} \|f\|^2_q.
\end{align*}
Using $H_{q,\chi} \geq H_{q,\chi^2}$ and $G_{q,\chi} \leq H_{q,\chi}$, we obtain
\begin{align*}
&\mu \langle |w|^q\rangle + \bigl(q-1-s - s(q-2)(1+\theta)\bigr)J_q^{}
+s\left(1+(q-2)\frac{d-2}{q} \right)G_{q,\chi}^{} \\
& - sd G_{q,\chi^2}^{} -4s G_{q,\chi^2}^{}  + 4sG_{q,\chi^3}^{} - s(q-2)\frac{1}{4\theta}G_{q,\chi^2}^{}\leq  C\|w\|_q^{q-2} \|f\|^2_q,
\end{align*}
i.e.
\begin{align*}
&\mu \langle |w|^q\rangle + \eta J_q+ \bigl(-\eta+q-1-s - s(q-2)(1+\theta)\bigr)J_q^{}
+s\langle M(\chi) |x|^{-4}(x \cdot w)^2|w|^{q-2}\rangle \leq  C\|w\|_q^{q-2} \|f\|^2_q,
\end{align*}
where
$$
M(\chi):=\biggl[1+(q-2)\frac{d-2}{q}+\biggl(-d-4+4\chi - (q-2)\frac{1}{4\theta}\biggr)\chi\biggr]\chi,
$$
i.e.
\[
M(\chi)=[\mathfrak a\chi^2 + \mathfrak b\chi+ \mathfrak c_0]\chi,
\]
where
\[
\mathfrak a=4, \;\; \mathfrak b=-d-4-\frac{1}{2}(q-2)\frac{d-2}{q},\;\;\mathfrak c_0=1+(q-2)\frac{d-2}{q}.
\]
Select
$
\theta:=\frac{1}{2}\frac{q}{d-2}.
$ (Motivation: Below we estimate $I_q^{} - s\bar{I}_{q,\chi}^{} + (q-2)(J_q^{} - s(1+\theta) \bar{J}_{q,\chi}^{}) \geq (q-1-s-s(q-2)(1+\theta))J_q \geq (q-1-s-s(q-2)(1+\theta))\frac{(d-2)^2}{q^2}G_q$, so estimating the terms  involving $\theta$ in the resulting inequality as $ -s(q-2)\theta\frac{(d-2)^2}{q^2} G_q - (q-2)\frac{1}{4\theta}G_{q,\chi^2}^{} \geq \bigl(-s(q-2)\theta\frac{(d-2)^2}{q^2}  - (q-2)\frac{1}{4\theta}\bigr)G_{q}^{}$, we arrive clearly at $\theta=\frac{1}{2}\frac{q}{d-2}$.)

Elementary arguments show that $\min_{0\leq t\leq 1}M(t) = M(1)<0.$
By the assumptions of the theorem, 
\begin{equation*}
(-\eta+q-1-s - s(q-2)(1+\theta))\frac{(d-2)^2}{q^2} +sM(1) \geq 0.
\end{equation*}
Thus, by $J_q \geq \frac{(d-2)^2}{q^2} H_q$ and $H_q \geq G_q$,
\begin{align*}
\mu \langle |w|^q \rangle  + \eta J_q^{} + \bigl[(-\eta+q-1-s - s(q-2)(1+\theta))\frac{(d-2)^2}{q^2} +sM(1)  \bigr]G_q \leq C\|w\|_q^{q-2} \|f\|^2_q,
\end{align*}
or, setting $\ell_2:=[1+(q-2)(1+\theta)]\frac{(d-2)^2}{q^2} -M(1)$, 
\begin{align*}
\mu \langle |w|^q \rangle  +  \eta J_q^{} +\bigg[(-\eta+q-1)\frac{(d-2)^2}{q^2} +c\ell_2 \bigg] G_q    \leq C\|w\|_q^{q-2} \|f\|^2_q.
\end{align*}
\eqref{req_est4} is proved. 

\medskip

\textbf{4.~}By \eqref{req_est4}, 
$
\mu\|w\|_q^q \leq C\|w\|_q^{q-2}\|f\|_q^2$, $w=\nabla u^\varepsilon$, $\varepsilon>0,
$
and so $$ \|\nabla u^\varepsilon\|_{q} \leq  K_1\mu^{-\frac{1}{2}} \|f\|_q, \quad K_1:=C^{\frac{1}{2}}.$$
Again by \eqref{req_est4},
$\eta J_q \leq C\|w\|_q^{q-2}\|f\|_q^2$, $J_q=\frac{4}{q^2}\|\nabla |w|^\frac{q}{2}\|^2_2$, so in view of the previous inequality $\eta \|\nabla |\nabla u^\varepsilon|^\frac{q}{2}\|^2_2 \leq \frac{q^2}{4} C K^{q-2}_1\mu^{1-\frac{q}{2}} \|f\|^q_q$. 
The Sobolev Embedding Theorem now yields $$\|\nabla u^\varepsilon \|_{q j} \leq  K_2\mu^{\frac{1}{q}-\frac{1}{2}} \|f\|_q, \quad K_2:=C_{S}\eta^{-\frac{1}{q}}(q^2/4)^{\frac{1}{q}} C^{\frac{1}{q}}K_1^{\frac{q-2}{q}}.$$
Since the weak gradient in $L^q$ is closed, Theorem \ref{conv_appendix}(\textit{i}) (with $b=0$)
yields $ \|\nabla u\|_{q} \leq  K_1\mu^{-\frac{1}{2}} \|f\|_q$, $\|\nabla u \|_{q j} \leq  K_2\mu^{\frac{1}{q}-\frac{1}{2}} \|f\|_q$ for $u=(\mu+A_q)^{-1}f$, $0 \leq f \in C_c^\infty$, and thus for all $f \in L^q$. 

We have proved (\textit{ii}).

\smallskip

\textit{Proof of {\rm(}\textit{iii}{\rm)}}. Let $q=2$, $d \geq 3$. 
The arguments above yield ($I_2 \equiv I_2(u^\varepsilon)$)

(a) For $c>0$,
\begin{align*}
&\langle [\nabla,A_2^\varepsilon]_-u,w \rangle = -\beta_1 - cH_{2,\chi} + 2c(d-1)G_{2,\chi^2},\\
&\mu \|w\|_2^2 +I_2 +c\bar{I}_{2,\chi} - \beta_1 -cH_{2,\chi} +2c(d-1)G_{2,\chi^2} = \langle f, -\nabla \cdot w \rangle\\
&\beta_1=-2c\langle |x|^{-4}x\cdot w, x\cdot(x\cdot \nabla w)\rangle.
\end{align*}
By $\beta_1 \leq 2c\sqrt{G_{2,\chi^2}\bar{I}_{2,\chi}}\leq c\bar{I}_{2,\chi} + cG_{2,\chi^2}$, $G_{2,\chi^2} \leq H_{2,\chi}$ and $I_2\geq\frac{(d-2)^2}{4}H_{2,\chi}$, it follows that $1-c\frac{4}{(d-2)^2}>0 \Rightarrow I_2\leq K\|f\|_2^2;$

(b) For $c<0$,
\begin{align*}
&\langle [\nabla,A_2^\varepsilon]_-u,w \rangle = \frac{1}{2}\beta_2 - cH_{2,\chi} + cdG_{2,\chi^2}, \quad \beta_2=0,\\
&\mu \|w\|_2^2 +I_2 +c\bar{I_2} -cH_{2,\chi} +cdG_{2,\chi^2} = \langle f, -\nabla \cdot w \rangle, \quad I_2\geq\bar{I_2}, \\
&\mu \|w\|_2^2 + (1-|c|)I_2 + |c|H_{2,\chi}-|c|dG_{2,\chi^2} \leq \langle f,-\nabla \cdot w  \rangle.
\end{align*}
Thus, by $G_{2,\chi^2} \leq H_{2,\chi} \leq \frac{4}{(d-2)^2}I_2$, we conclude that $1-|c| + |c|(1-d)\frac{4}{(d-2)^2} > 0 \Rightarrow I_2(u^\varepsilon)\leq K\|f\|_2^2.$ 

By passing to the limit $\varepsilon \downarrow 0$, using Theorem \ref{conv_appendix}, we obtain $I_2(u)  \leq K\|f\|_2$. Therefore,
$u \in W^{2,2}$.

\smallskip

The proof of Theorem \ref{thm1} is completed. \qed

\section{Proof of Theorem \ref{thm2}}

\textit{Proof of (i)}. A vector field $b:\mathbb R^d \rightarrow \mathbb R^d$ belongs to $\mathbf F_{\delta_1}(A)$, $\delta_1>0$, the class of form-bounded vector fields (with respect to $A \equiv A_2$), if $b_a^2:=b \cdot a^{-1} \cdot b
 \in L^1_\loc$ and there exists a constant $\lambda=\lambda_{\delta_1}>0$ such that 
$$\|b_a(\lambda + A)^{-\frac{1}{2}}\|_{2\rightarrow 2}\leq \sqrt{\delta_1}.$$
It is easily seen that if $b \in \mathbf{F}_\delta$, then 
$b \in \mathbf{F}_{\delta_1}(A)$, where $\delta_1:=\delta$ if $c>0$, and $\delta_1:=\delta (1+c)^{-2}$  if $-1<c<0$. By our assumption, $\delta_1<4$. 
Therefore, by \cite[Theorem 3.2]{KiS}, $-\nabla \cdot a \cdot \nabla + b \cdot \nabla$ has an operator realization  $\Lambda_q(a,b)$ in $L^q$, $q \in \big[\frac{2}{2-\sqrt{\delta_1}}, \infty\big[$, as the (minus) generator of a  positivity preserving $L^\infty$ contraction quasi contraction $C_0$ semigroup. Moreover, $(\mu+\Lambda_q(a,b))^{-1}$ is well defined on $L^q$ for all $\mu>\frac{\lambda \delta}{2(q-1)}$.
This completes the proof of (\textit{i}).

\smallskip

\textit{Proof of (ii)}. Set
$
a^\varepsilon:=I+c|x|^{-2}_\varepsilon x \otimes x$, $|x|_\varepsilon:=\sqrt{|x|^2+\varepsilon}$,  $\varepsilon>0$. Put $A^\varepsilon=A(a^\varepsilon)$.
It is clear that $b \in \mathbf{F}_{\delta_1}(A^\varepsilon)$ for all $\varepsilon>0$.

Let $\mathbf 1_n$ denote the indicator of  $\{x \in \mathbb R^d \mid  \; |x| \leq n,  |b(x)| \leq n \}$, and set $b_n := \gamma_{\epsilon_n} \ast \mathbf 1_n b \in C^\infty$, where $\gamma_{\epsilon}$ is the K.\,Friedrichs mollifier, $\epsilon_n \downarrow 0$.
Since our assumptions on $\delta$ and thus $\delta_1$ involve strict inequalities only, we can select $\epsilon_n \downarrow 0$ so that  $b_n \in \mathbf{F}_{\delta_1}(A^\varepsilon)$, $\varepsilon>0$, $n \geq 1$. 
Therefore, in view of the previous discussion, $(\mu+\Lambda_q(a^\varepsilon,b_n))^{-1}$ is well defined on $L^q$, $\mu>\frac{\lambda\delta}{2(q-1)}$, $\varepsilon>0$, $n \geq 1$. 
Here $\Lambda_q(a^\varepsilon, b_n)=-\nabla \cdot a^\varepsilon \cdot \nabla + b_n \cdot \nabla$, $D(\Lambda_q(a^\varepsilon, b_n))=W^{2,q}$.

Define $0 \leq u \equiv u^{\varepsilon,n} := (\mu + \Lambda_q(a^\varepsilon,b_n))^{-1} f$, $0 \leq f \in C_c^1$. Then $u \in W^{3,q}$.
Set $w \equiv w^{\varepsilon,n}:=\nabla u^{\varepsilon,n}$ and
$$
I_q:=\langle (\nabla_r w)^2 |w|^{q-2}  \rangle, \quad
J_q:=\langle (\nabla |w|)^2 |w|^{q-2}\rangle,$$
$$
\bar{I}_{q,\chi}:= \langle  \bigl( x \cdot \nabla w \bigr)^2 \chi|x|^{-2}  |w|^{q-2} \rangle, \quad
\bar{J}_{q,\chi}:= \langle  (x \cdot \nabla |w|)^2 \chi|x|^{-2}|w|^{q-2}  \rangle, \quad \chi=|x|^2|x|_\varepsilon^{-2},
$$
$$H_{q,\chi}:=\langle \chi|x|^{-2}|w|^q \rangle, \quad H_{q,\chi^2}:=\langle \chi^2|x|^{-2}|w|^q \rangle, \quad G_{q,\chi^2}:=\langle \chi^2|x|^{-4} (x \cdot w)^2
|w|^{q-2}\rangle.
$$
Below we follow closely the proof of Theorem \ref{thm1}. 

\medskip

\textbf{1.~}We repeat the proof of Lemma \ref{basic_ineq_lemma}, where in the right-hand side of \eqref{be_+}, \eqref{be_-}
we now get an extra term $\langle -b_n\cdot w, \phi^{} \rangle$:
\begin{align}
&\mu \langle |w|^q \rangle + I_q + c\bar{I}_{q,\chi}+ (q-2)(J_q + c\bar{J}_{q,\chi}) 
 - c\biggl(1+(q-2)\frac{d}{q}\biggr)H_{q,\chi} + 2c(d-1)G_{q,\chi^2} \notag \\
&+2c\frac{q-2}{q} H_{q,\chi^2} + 8c\varepsilon \langle |x|_\varepsilon^{-6} (x \cdot w)^2 |w|^{q-2}\rangle =  \beta_1 + 
\langle -b_n\cdot w, \phi \rangle + \langle f, \phi \rangle
\tag{${\rm BE}_{+,b}$},
\label{be_+b}
\end{align}
\begin{align}
&\mu \langle |w|^q \rangle + I_q^{} + c\bar{I}_{q,\chi}^{} + (q-2)(J_q^{} + c\bar{J}_{q,\chi}^{})  - c\biggl(1+(q-2)\frac{d}{q}\biggr)H_{q,\chi}^{} + cd G_{q,\chi^2}^{} \notag \\
&+2c\frac{q-2}{q}H_{q,\chi^2} + 4c\varepsilon \langle |x|_\varepsilon^{-6} (x \cdot w)^2 |w|^{q-2}\rangle = - \frac{1}{2}\beta_2^{}+ \langle -b_n\cdot w, \phi^{} \rangle+ \langle f, \phi^{} \rangle,
\tag{${\rm BE}_{-,b}$}
\label{be_-b}
\end{align}
where, recall,
$$
\beta_1^{}:=-2c\langle |x|_\varepsilon^{-4} x \cdot w, x \cdot (x \cdot \nabla w)|w|^{q-2}\rangle, \qquad \beta_2^{}:=-2c(q-2)\langle |x|_\varepsilon^{-4} (x \cdot w)^2 x \cdot \nabla |w|,|w|^{q-3} \rangle.
$$

We estimate $\langle -b_n\cdot w, \phi^{} \rangle$ as follows.

\begin{lemma}
\label{b_est_lem}
There exist constants $C_i$ \rm{($i=1,2$)} such that
\begin{align*} 
& \langle - b_n\cdot w, \phi \rangle \\
&\leq
|c|(d+3)\frac{q\sqrt{\delta}}{2}G_{q,\chi^2}^{\frac{1}{2}}J_{q}^{\frac{1}{2}} + |c|\frac{q\sqrt{\delta}}{2}\bar{I}_{q,\chi}^{\frac{1}{2}}J_{q}^{\frac{1}{2}}+\biggl(\frac{ q^2\delta}{4} + (q-2)\frac{q\sqrt{\delta}}{2} \biggr)J_{q}
  + C_1\|w\|^q_q + C_2\|w\|_q^{q-2} \|f\|^2_q.
\end{align*}
\end{lemma}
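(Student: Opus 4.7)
The strategy is to expand $\phi=-(\Delta u)|w|^{q-2}-(q-2)|w|^{q-3}w\cdot\nabla|w|$ and to eliminate $\Delta u$ via the equation $\mu u+\Lambda_q(a^\varepsilon,b_n)u=f$. A short calculation gives $\nabla\cdot(a^\varepsilon-I)\cdot w = c(d+1)|x|_\varepsilon^{-2}(x\cdot w)-2c\chi|x|_\varepsilon^{-2}(x\cdot w)+c|x|_\varepsilon^{-2}x\cdot(x\cdot\nabla w)$, whence
\begin{align*}
\Delta u&=-c(d+1)|x|_\varepsilon^{-2}(x\cdot w)+2c\chi|x|_\varepsilon^{-2}(x\cdot w)\\
&\quad-c|x|_\varepsilon^{-2}x\cdot(x\cdot\nabla w)+b_n\cdot w+\mu u-f.
\end{align*}
Substituting decomposes $\langle -b_n\cdot w,\phi\rangle$ into six summands that I estimate one by one.

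The central tool is the form-bound applied to $g=|w|^{q/2}$: since $b_n\in\mathbf{F}_\delta(-\Delta)$ with constant $\lambda$,
\[
\bigl\||b_n||w|^{q/2}\bigr\|_2\leq \tfrac{q\sqrt{\delta}}{2}J_q^{1/2}+\sqrt{\delta\lambda}\|w\|_q^{q/2},
\]
equivalently $\||b_n||w|^{q/2}\|_2^2\leq \tfrac{q^2\delta}{4}J_q+\delta\lambda\|w\|_q^q$. For each summand I single out the factor $b_n\cdot w$ and apply Cauchy--Schwarz in the form $|\langle b_n\cdot w,F|w|^{q-2}\rangle|\leq \||b_n||w|^{q/2}\|_2\cdot\langle F^2|w|^{q-2}\rangle^{1/2}$, matching the second factor to a known quantity. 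The two scalar $(x\cdot w)$ summands together yield $G_{q,\chi^2}^{1/2}$ with combined constant $|c|(d+3)$, because $|x|_\varepsilon^{-4}(x\cdot w)^2=\chi^2|x|^{-4}(x\cdot w)^2$ and $\chi\leq 1$. The $x\cdot(x\cdot\nabla w)$ summand yields $\bar I_{q,\chi}^{1/2}$ with constant $|c|$, using $|x\cdot(x\cdot\nabla w)|\leq |x|\,|x\cdot\nabla w|$. The $(q-2)$-summand from the expansion of $\phi$ yields $J_q^{1/2}$ directly. The $\mu u$ and $f$ summands are bounded via H\"older by $\|w\|_q^{(q-2)/2}(\mu\|u\|_q+\|f\|_q)$; here the $L^q$ quasi-contractivity $\mu\|u\|_q\leq K\|f\|_q$, valid for $\mu$ above the quasi-contractivity threshold $\mu_0$ of Theorem \ref{thm2}, reduces the contribution to a $\|w\|_q^{(q-2)/2}\|f\|_q$ form absorbable into $C_2\|w\|_q^{q-2}\|f\|_q^2$. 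Finally, the pure $J_q$-coefficient $\tfrac{q^2\delta}{4}$ emerges from the $b_n\cdot w$ summand via the squared form-bound.

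The main obstacle is the clean bookkeeping of the remainder cross terms of shape $\sqrt{\delta\lambda}\|w\|_q^{q/2}\cdot(\cdot)^{1/2}$ produced by the second summand of the form-bound. These are dispatched by Young's inequality $ab\leq\varepsilon a^2+C_\varepsilon b^2$ with a small parameter $\varepsilon>0$, using the Hardy bound $G_{q,\chi^2}\leq \tfrac{q^2}{(d-2)^2}J_q$ together with $\bar I_{q,\chi}\leq I_q$ to keep the absorbed quantities of controlled size. Here $\varepsilon$ is taken small enough that the induced perturbations of the three displayed principal coefficients $|c|(d+3)\tfrac{q\sqrt\delta}{2}$, $|c|\tfrac{q\sqrt\delta}{2}$, and $\tfrac{q^2\delta}{4}+(q-2)\tfrac{q\sqrt\delta}{2}$ are negligible relative to the strict inequalities posited on $c$ and $\delta$ in Theorem \ref{thm2}. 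All remaining residuals are collected into $C_1\|w\|_q^q+C_2\|w\|_q^{q-2}\|f\|_q^2$, with $C_1,C_2$ allowed to depend on $\mu_0,\lambda,\delta,c,d,q$.
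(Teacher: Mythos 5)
Your proposal follows the paper's own proof essentially step for step: the same substitution of $-\Delta u$ via the resolvent equation, the same six-term decomposition estimated by Cauchy--Schwarz against $B_q^{1/2}=\langle|b_n\cdot w|^2|w|^{q-2}\rangle^{1/2}$, the same conversion of $B_q$ into $\tfrac{q^2\delta}{4}J_q+\lambda\delta\|w\|_q^q$ via the form-bound applied to $|w|^{q/2}$, and the same absorption of the $\mu u$, $f$ and cross terms by Young's inequality with a small parameter into $C_1\|w\|_q^q+C_2\|w\|_q^{q-2}\|f\|_q^2$. The argument is correct and not a different route.
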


\begin{proof}[Proof of Lemma \ref{b_est_lem}]
For brevity, below $b \equiv b_n$. We have:
\begin{align*}
 \langle-b\cdot w, \phi \rangle & =\langle - \Delta u, |w|^{q-2}(-b\cdot w)\rangle + (q-2) \langle |w|^{q-3} w \cdot \nabla |w|,-b\cdot w\rangle\\
&=:F_1+F_2.
\end{align*}
Set $B_q:=\langle |b \cdot w|^2|w|^{q-2}\rangle$. We have
$$F_2
\leq (q-2) B_{q}^\frac{1}{2} J_{q}^\frac{1}{2}.$$
Next, we bound $F_1$. We represent $-\Delta u=\nabla \cdot (a^\varepsilon-1)\cdot w -\lambda u - b\cdot w +f$,
and evaluate
\begin{align*}
F_1& =\langle \nabla \cdot (a^\varepsilon-1)\cdot w, |w|^{q-2}(-b\cdot w)\rangle  + \langle (-\lambda u - b\cdot w +f),|w|^{q-2}(-b\cdot w) \rangle \\
& (\text{we expand the first term using $\nabla a^\varepsilon=c(d+1)|x|_\varepsilon^{-2}x-2c |x|^2|x|_\varepsilon^{-4}x$}) \\
& = c(d+1)\langle|x|_\varepsilon^{-2}x\cdot w, |w|^{q-2}(-b\cdot w) \rangle \\
&- 2c\langle 
\chi|x|_\varepsilon^{-2}x\cdot w, |w|^{q-2}(-b\cdot w) \rangle \\
& + c \langle |x|_\varepsilon^{-2} x \cdot (x \cdot \nabla w), |w|^{q-2}(-b\cdot w) \rangle \\
& + \langle (-\lambda u - b\cdot w +f),|w|^{q-2}(-b\cdot w) \rangle.
\end{align*}
We bound $F_1$ from above by applying consecutively the following estimates:

\smallskip

$1^\circ$) $\langle|x|_\varepsilon^{-2}x\cdot w, |w|^{q-2}(-b\cdot w) \rangle \leq G_{q,\chi^2}^{\frac{1}{2}} B_{q}^{\frac{1}{2}}$.

\smallskip

$2^\circ$) $\langle \chi|x|_\varepsilon^{-2}x\cdot w, |w|^{q-2}(-b\cdot w) \rangle \leq G_{q,\chi^4}^{\frac{1}{2}} B_{q}^{\frac{1}{2}} \leq G_{q,\chi^2}^{\frac{1}{2}} B_{q}^{\frac{1}{2}}$.

\smallskip

$3^\circ$) $\langle |x|_\varepsilon^{-2} x \cdot (x \cdot \nabla w), |w|^{q-2}(-b\cdot w) \rangle 
\leq \bar{I}_{q,\chi}^{\frac{1}{2}}B_{q}^{\frac{1}{2}}$.

\smallskip

$4^\circ$) $\langle (-\lambda u) , |w|^{q-2} (-b \cdot w) \rangle  \leq \frac{\lambda}{\lambda-\omega_q} B_{q}^\frac{1}{2} \|w\|_q^\frac{q-2}{2}  \|f\|_q$ $\bigl(\text{here } \frac{2}{2-\sqrt{\delta}} < q \Rightarrow \|u_n\|_q \leq (\lambda - \omega_q )^{-1} \|f\|_q \bigr)$.

\smallskip

$5^\circ$) $\langle b \cdot w, |w|^{q-2} b \cdot w \rangle = B_{q} .$

\smallskip

$6^\circ$) $\langle f, |w|^{q-2} (- b \cdot w)\rangle |\leq B_{q}^\frac{1}{2} \|w\|_q^\frac{q-2}{2} \|f\|_q .$

\smallskip

In $4^\circ$) and $6^\circ$ we estimate $B_{q}^\frac{1}{2} \|w\|_q^\frac{q-2}{2} \|f\|_q \leq \varepsilon_0 B_q + \frac{1}{4\varepsilon_0}\|w\|_q^{q-2} \|f\|^2_q$ ($\varepsilon_0>0$).

\smallskip

Therefore,
\begin{align*}
& \langle-b\cdot w, \phi \rangle \\
 &\leq |c|(d+3)G_{q,\chi^2}^{\frac{1}{2}}B_{q}^{\frac{1}{2}} + |c|\bar{I}_{q}^{\frac{1}{2}}B_{q}^{\frac{1}{2}}+B_{q} + (q-2)B_{q}^{\frac{1}{2}}J_{q}^{\frac{1}{2}} + \varepsilon_0 B_q +  C_2(\varepsilon_0)\|w\|_q^{q-2} \|f\|^2_q.
\end{align*}
It is easily seen that $b \in \mathbf{F}_\delta$ 
is equivalent to the inequality
$$
\langle b^2 |\varphi|^2 \rangle \leq \delta \langle |\nabla \varphi|^2\rangle  + \lambda\delta\langle |\varphi|^2 \rangle, \quad \varphi \in W^{1,2}.
$$
Thus,
\begin{equation*}
B_q \leq \|b |w|^\frac{q}{2} \|_2^2 \leq \delta \| \nabla |w|^\frac{q}{2} \|_2^2 + \lambda\delta \|w\|_q^q = \frac{ q^2\delta}{4} J_q + \lambda\delta \|w\|_q^q,  
\end{equation*}
and then selecting $\varepsilon_0>0$ sufficiently small, and noticing that the assumption on $\delta$ in the theorem is a strict inequality, we can and will ignore below the terms multiplied by $\varepsilon_0$. The proof of Lemma \ref{b_est_lem} is completed.
\end{proof}

\textbf{2.~}We estimate $\langle f, \phi \rangle $ in \eqref{be_+b}, \eqref{be_-b} by an evident analogue Lemma \ref{f_est_lem}:
\begin{equation*}
\langle f, \phi \rangle \leq  \varepsilon_0  (I_{q}^{} +  J^{}_{q} + H_q+\|w\|_q^q) + C(\varepsilon_0) \|w\|_q^{q-2} \|f\|^2_q
\end{equation*}
(selecting $\varepsilon_0>0$ sufficiently small so that we will ignore below the terms multiplied by $\varepsilon_0$).

Applying Lemma \ref{b_est_lem} and the last inequality in  \eqref{be_+b}, \eqref{be_-b}, and using
 $\beta_1^{} \leq c\theta \bar{I}_{q,\chi}^{} + c\theta^{-1} G_{q,\chi^2}^{}$, $
|\beta_2^{}| \leq 2|c|(q-2)\bigl(\theta \bar{J}_{q,\chi}^{}  + 4^{-1}\theta^{-1} G_{q,\chi^2}^{}\bigr), \; \theta>0,
$
we obtain: 

If $c>0$, then 
\begin{align}
&\mu \langle |w|^q \rangle + I_q + c(1-\theta)\bar{I}_{q,\chi}+ (q-2)(J_q + c\bar{J}_{q,\chi}) 
 - c\biggl(1+\frac{q-2}{q}d\biggr)H_{q,\chi} + c\big(2(d-1)- \theta^{-1} \big)G_{q,\chi^2} \notag  \\
&+2c\frac{q-2}{q} H_{q,\chi^2} \label{c_b+} \\
& \leq c(d+3)\frac{q\sqrt{\delta}}{2}G_{q,\chi^2}^{\frac{1}{2}}J_{q}^{\frac{1}{2}} + c\frac{q\sqrt{\delta}}{2}\bar{I}_{q,\chi}^{\frac{1}{2}}J_{q}^{\frac{1}{2}}+\biggl(\frac{ q^2\delta}{4} + (q-2)\frac{q\sqrt{\delta}}{2} \biggr)J_{q}
  + C_1\|w\|^q_q + C_2\|w\|_q^{q-2} \|f\|^2_q. \notag
\end{align}
If $-1<c<0$, then (set $s:=|c|$)
\begin{align}
&\mu \langle |w|^q\rangle + I_q^{} - s\bar{I}_{q,\chi}^{} + (q-2)(J_q^{} - s(1+\theta) \bar{J}_{q,\chi}^{}) + s\left(1+(q-2)\frac{d}{q} \right)H_{q,\chi}^{} \notag \\
&-2s\frac{q-2}{q}H_{q,\chi^2}^{} - sd G_{q,\chi^2}^{} -4s G_{q,\chi^2}^{}  + 4sG_{q,\chi^3}^{}  - s(q-2)\frac{1}{4\theta}G_{q,\chi^2}^{} \label{c_b-}\\
&\leq  s(d+3)\frac{q\sqrt{\delta}}{2}G_{q,\chi^2}^{\frac{1}{2}}J_{q}^{\frac{1}{2}} + s\frac{q\sqrt{\delta}}{2}\bar{I}_{q,\chi}^{\frac{1}{2}}J_{q}^{\frac{1}{2}}+\biggl(\frac{ q^2\delta}{4} + (q-2)\frac{q\sqrt{\delta}}{2} \biggr)J_{q}
  + C_1\|w\|^q_q + C_2\|w\|_q^{q-2} \|f\|^2_q. \notag
\end{align}

\textbf{3.~}Employing \eqref{c_b+}, \eqref{c_b-} we will prove the following inequality
\begin{equation}
\label{req_est_40}
\mu \langle |w|^q \rangle + \eta J_q^{} \leq C_1\|w\|_q^{q}+C_2\|w\|_q^{q-2} \|f\|^2_q, \qquad C_i=C_i(\varepsilon_0), \quad i=1,2,
\end{equation}
for some $\eta=\eta(q,d,\varepsilon_0)>0$.

\medskip

\noindent\textbf{Case $c>0$.} In the LHS of \eqref{c_b+} we select $\theta:=\frac{1}{2(d-1)}\;(<1)$.
Consider two subcases:

C1) $1-\frac{cq\sqrt{\delta}}{4} \geq 0$. 
Arguing as in the proof of Theorem \ref{thm1}, using $H_q \geq G_{q,\chi^2}$, we obtain from \eqref{c_b+}:
\begin{align*}
&\mu\langle |w|^q \rangle  + I_q+(q-2)J_q^{} + cM(1)H_q \\
&\leq c(d+3)\frac{q\sqrt{\delta}}{2}H_q^{\frac{1}{2}}J_{q}^{\frac{1}{2}} + c\frac{q\sqrt{\delta}}{2}\bar{I}_{q,\chi}^{\frac{1}{2}}J_{q}^{\frac{1}{2}}+\biggl(\frac{ q^2\delta}{4} + (q-2)\frac{q\sqrt{\delta}}{2} \biggr)J_{q}
  + C_1\|w\|^q_q + C_2\|w\|_q^{q-2} \|f\|^2_q,
\end{align*}
where $M(1):=\bigl(q-1-\frac{1}{2(d-1)}\bigr)\frac{(d-2)^2}{q^2} - \bigl(1+\frac{q-2}{q}(d-2)\bigr)<0$.

 Using the quadratic estimates, we obtain ($\theta_2, \theta_3>0$)
\begin{align*}
&\mu\langle |w|^q \rangle  + I_q+(q-2)J_q^{}  + cM(1)H_q \\
&\leq c(d+3)\frac{q\sqrt{\delta}}{4}(\theta_2J_{q} + \theta^{-1}_2H_q) + c\frac{q\sqrt{\delta}}{4}(\theta_3 \bar{I}_{q,\chi}+\theta_3^{-1}J_{q})+\biggl(\frac{ q^2\delta}{4} + (q-2)\frac{q\sqrt{\delta}}{2} \biggr)J_{q} \\
& + C_1\|w\|^q_q + C_2\|w\|_q^{q-2} \|f\|^2_q.
\end{align*}
We select $\theta_2=\frac{q}{d-2}$, $\theta_3=1$, so
\begin{align*}
&\mu\langle |w|^q \rangle  + I_q+(q-2)J_q^{}  + cM(1)H_q \\
&\leq c(d+3)\frac{q\sqrt{\delta}}{4}\left(\frac{q}{d-2}J_{q} + \frac{d-2}{q}H_q\right) + c\frac{q\sqrt{\delta}}{4}( \bar{I}_{q,\chi}+J_{q})+\biggl(\frac{ q^2\delta}{4} + (q-2)\frac{q\sqrt{\delta}}{2} \biggr)J_{q} \\
& + C_1\|w\|^q_q + C_2\|w\|_q^{q-2} \|f\|^2_q.
\end{align*}
Since  $1-\frac{cq\sqrt{\delta}}{4} \geq 0$, we have $I_{q} - \frac{cq\sqrt{\delta}}{4} \bar{I}_{q,\chi} \geq \big( 1-\frac{cq\sqrt{\delta}}{4}\big)J_q$, so using $J_q \geq \frac{(d-2)^2}{q^2} H_q$ we obtain
\begin{align*}
\mu\langle |w|^q \rangle  +\eta J_q + \bigg[\big(-\eta+q-1\big)\frac{(d-2)^2}{q^2}-\LL_{1}(c,\delta)\bigg] H_q  \leq C_1\|w\|^q_q + C_2\|w\|_q^{q-2} \|f\|^2_q,
\end{align*}
where 
\begin{align*}
\LL_{1}(c,\delta) & =\bigg[c\frac{q\sqrt{\delta}}{4}+c(d+3)\frac{q\sqrt{\delta}}{4}\frac{q}{d-2} + c\frac{q\sqrt{\delta}}{4} + \frac{q^2\delta}{4} + (q-2)\frac{q\sqrt{\delta}}{2} \bigg]\frac{(d-2)^2}{q^2} \notag  \\
& + c\biggl[-M(1) + (d+3)\frac{q\sqrt{\delta}}{4}\frac{d-2}{q}\biggr].
\end{align*}
By the assumptions of the theorem, $\big(-\eta+q-1\big)\frac{(d-2)^2}{q^2}-\LL_{1}(c,\delta) \geq 0$ for all $\eta>0$ sufficiently small. \eqref{req_est_40} is proved.

\smallskip

C2) $1-\frac{cq\sqrt{\delta}}{4}<0$. Arguing as above,  we obtain
\begin{align*}
&\mu\langle |w|^q \rangle  + I_q+(q-2)J_q^{} +c(1-\theta)\bar{I}_{q,\chi} + cM(1)H_q \\
&\leq c(d+3)\frac{q\sqrt{\delta}}{4}\bigg(\frac{q}{d-2}J_{q} + \frac{d-2}{q}H_q\bigg) + c\frac{q\sqrt{\delta}}{4}( \bar{I}_{q,\chi}+J_{q})+\biggl(\frac{ q^2\delta}{4} + (q-2)\frac{q\sqrt{\delta}}{2} \biggr)J_{q} \\
& + C_1\|w\|^q_q + C_2\|w\|_q^{q-2} \|f\|^2_q.
\end{align*}
where $M(1):=\bigl(q-2\bigr)\frac{(d-2)^2}{q^2} - \bigl(1+\frac{q-2}{q}(d-2)\bigr)<0$.

If $1-\frac{1}{2(d-1)}-\frac{q\sqrt{\delta}}{4} < 0$, then clearly
$c\big(1-\frac{1}{2(d-1)}\big)\bar{I}_{q,\chi} - c\frac{q\sqrt{\delta}}{4} \bar{I}_{q,\chi} \geq c\big(1-\frac{1}{2(d-1)} - \frac{q\sqrt{\delta}}{4} \big)I_q$. 
By the assumption of the theorem, 
 $1+c\big(1-\frac{1}{2(d-1)}-\frac{q\sqrt{\delta}}{4}\big) \geq 0$, so in the previous estimate $\big[1+c\big(1-\frac{1}{2(d-1)}-\frac{q\sqrt{\delta}}{4}\big)\big]I_q \geq \big[1+c\big(1-\frac{1}{2(d-1)}-\frac{q\sqrt{\delta}}{4}\big)\big]J_q$, and thus
\begin{align}
\label{req_ineq5}
\mu\langle |w|^q \rangle  +\eta J_q + \bigg[\big(-\eta+q-1\big)\frac{(d-2)^2}{q^2}-\LL_{1}(c,\delta)\bigg] H_q  \leq C_1\|w\|^q_q + C_2\|w\|_q^{q-2} \|f\|^2_q,
\end{align}
where 
\begin{align*}
&\LL_{1}(c,\delta) =\bigg[-c\bigg(1-\frac{1}{2(d-1)}-\frac{q\sqrt{\delta}}{4}\bigg) + c\frac{q\sqrt{\delta}}{4} + c(d+3)\frac{q\sqrt{\delta}}{4}\frac{q}{d-2} + \frac{q^2\delta}{4} + (q-2)\frac{q\sqrt{\delta}}{2} \bigg]\frac{(d-2)^2}{q^2} \notag \\
& + c\biggl[-M(1) + (d+3)\frac{q\sqrt{\delta}}{4}\frac{d-2}{q}\biggr]
\end{align*}
(another representation for $\LL_{1}(c,\delta)$).
By the assumptions of the theorem, $\big(-\eta+q-1\big)\frac{(d-2)^2}{q^2}-\LL_{1}(c,\delta) \geq 0$ for all $\eta>0$ sufficiently small. \eqref{req_est_40} is proved.

If  $1-\frac{1}{2(d-1)}-\frac{q\sqrt{\delta}}{4} \geq 0$, then clearly $I_q+c\big(1-\frac{1}{2(d-1)}-\frac{q\sqrt{\delta}}{4}\big)\bar{I}_{q,\chi} \geq J_q+c\big(1-\frac{1}{2(d-1)}-\frac{q\sqrt{\delta}}{4}\big)\bar{J}_{q,\chi}$. Arguing as above, we obtain \eqref{req_ineq5} and therefore \eqref{req_est_40}.

\medskip

\noindent\textbf{Case $-1<c<0$.} In the LHS of \eqref{c_b-} we select $\theta=\frac{1}{2}\frac{q}{d-2}$. Arguing as in the proof of Theorem \ref{thm1}, we obtain from \eqref{c_b-}:
\begin{align}
&\mu \langle |w|^q\rangle + I_q^{} - s\bar{I}_{q,\chi}^{} + (q-2)(J_q^{} - s(1+\theta) \bar{J}_{q,\chi}^{}) + sM(1)G_q \\
&\leq  s(d+3)\frac{q\sqrt{\delta}}{2}G_{q}^{\frac{1}{2}}J_{q}^{\frac{1}{2}} + s\frac{q\sqrt{\delta}}{2}\bar{I}_{q,\chi}^{\frac{1}{2}}J_{q}^{\frac{1}{2}}+\biggl(\frac{ q^2\delta}{4} + (q-2)\frac{q\sqrt{\delta}}{2} \biggr)J_{q}
  + C_1\|w\|^q_q + C\|w\|_q^{q-2} \|f\|^2_q,\notag
\end{align}
where 
$M(1):=-d+1+\frac{1}{2}\frac{(q-2)(d-2)}{q}<0$. In the RHS of \eqref{c_b-} we have used $G_q \geq G_{q,\chi^2}$.

Further ($\theta_2$, $\theta_3>0$),
\begin{align*}
&\mu \langle |w|^q\rangle + I_q^{} - s\bar{I}_{q,\chi}^{} + (q-2)(J_q^{} - s(1+\theta) \bar{J}_{q,\chi}^{}) + sM(1)G_q \\
&\leq  s(d+3)\frac{q\sqrt{\delta}}{4}(\theta_2J_{q} + \theta^{-1}_2G_q) + s\frac{q\sqrt{\delta}}{4}(\theta_3 \bar{I}_{q,\chi}+\theta_3^{-1}J_{q})+\biggl(\frac{ q^2\delta}{4} + (q-2)\frac{q\sqrt{\delta}}{2} \biggr)J_{q} \\
& + C_1\|w\|^q_q + C_2\|w\|_q^{q-2} \|f\|^2_q.
\end{align*}
Selecting $\theta_2=\frac{q}{d-2}$, $\theta_3=1$ and using the inequalities $I_q \geq \bar{I}_{q,\chi}$, $J_q \geq \bar{J}_{q,\chi}$, 
we obtain
\begin{align*}
&\mu \langle |w|^q\rangle + \bigg[1-s\bigg(1+\frac{q\sqrt{\delta}}{4}\bigg)\bigg]I_q^{} \\
& + \biggl[(q-2)\bigg(1- s\bigg(1+\frac{1}{2}\frac{q}{d-2}\bigg)\bigg)-s\frac{q\sqrt{\delta}}{4}-s(d+3)\frac{q\sqrt{\delta}}{4}\frac{q}{d-2} -\frac{q^2\delta}{4} - (q-2)\frac{q\sqrt{\delta}}{2}\biggr]J_q \\
&+ s\biggl[M(1) -(d+3)\frac{q\sqrt{\delta}}{4}\frac{d-2}{q}\biggr]G_q \leq  C_1\|w\|^q_q + C_2\|w\|_q^{q-2} \|f\|^2_q.
\end{align*}
By the assumptions of the theorem, $
1-s\big(1+\frac{q\sqrt{\delta}}{4}\big) \geq 0.
$
So, using the inequalities $J_q \leq I_q$, $J_q \geq \frac{(d-2)^2}{q^2}G_q$, 
 we arrive at
\begin{align*}
\mu\langle |w|^q \rangle  +\eta J_q + \bigg[\big(-\eta+q-1\big)\frac{(d-2)^2}{q^2}-\LL_{2}(s,\delta)\bigg] G_q  \leq C_1\|w\|^q_q + C_2\|w\|_q^{q-2} \|f\|^2_q,
\end{align*}
where 
\begin{align*}
& \LL_{2}(s,\delta) \\
&=\bigg[s\bigg(1+\frac{q\sqrt{\delta}}{4}\bigg)  + (q-2)s\bigg(1+\frac{1}{2}\frac{q}{d-2}\bigg)+s\frac{q\sqrt{\delta}}{4}+s(d+3)\frac{q\sqrt{\delta}}{4}\frac{q}{d-2} +\frac{q^2\delta}{4} + (q-2)\frac{q\sqrt{\delta}}{2}\biggr]\frac{(d-2)^2}{q^2} \\
&+ s\biggl[-M(1) +(d+3)\frac{q\sqrt{\delta}}{4}\frac{d-2}{q}\biggr]. 
\end{align*}
By the assumptions of the theorem, $\big(-\eta+q-1\big)\frac{(d-2)^2}{q^2} - \LL_{2}(s,\delta) \geq 0$ for all $\eta>0$ sufficiently small. \eqref{req_est_40} is proved.

\smallskip

\textbf{4.~}The Sobolev Embedding Theorem and Theorem \ref{conv_appendix}(\textit{i})  now yield \eqref{reg_double_star} (cf.\,the proof of Theorem \ref{thm1}, step 4).

\smallskip

\textit{Proof of {\rm(}iii{\rm)}}. Let $q=2$, $d \geq 3$.
Recall that since $b \in \mathbf{F}_\delta$, then 
$b \in \mathbf{F}_{\delta_1}(A)$, where $\delta_1:=\delta$ if $c>0$, and $\delta_1:=\delta (1+c)^{-2}$  if $-1<c<0$
By our assumptions, $\delta_1<1$, so $\Lambda_2(a^\varepsilon,b_n)=A^\varepsilon_2 + b_n\cdot\nabla$ are well defined on $L^2$.
Following the proof of Theorem \ref{thm1}(\textit{iii}),
we obtain: for $c>0$
\begin{align*}
&\langle [\nabla,A_2^\varepsilon]_-u,w \rangle = -\beta_1 - cH_{2,\chi} + 2c(d-1)G_{2,\chi^2},\\
&\mu \|w\|_2^2 +I_2 +c\bar{I}_{2,\chi} - \beta_1 -cH_{2,\chi} +2c(d-1)G_{2,\chi^2} = \langle -b_n \cdot w,-\nabla \cdot w\rangle + \langle f, -\nabla \cdot w \rangle\\
&\beta_1=-2c\langle |x|^{-4}x\cdot w, x\cdot(x\cdot \nabla w)\rangle;
\end{align*}
for $c<0$
\begin{align*}
&\langle [\nabla,A_2^\varepsilon]_-u,w \rangle = \frac{1}{2}\beta_2 - cH_{2,\chi} + cdG_{2,\chi^2}, \quad \beta_2=0,\\
&\mu \|w\|_2^2 +I_2 +c\bar{I_2} -cH_{2,\chi} +cdG_{2,\chi^2} = \langle -b_n \cdot w,-\nabla \cdot w\rangle + \langle f, -\nabla \cdot w \rangle, \quad I_2\geq\bar{I_2}, \\
&\mu \|w\|_2^2 + (1-|c|)I_2 + |c|H_{2,\chi}-|c|dG_{2,\chi^2} \leq \langle -b_n \cdot w,-\nabla \cdot w\rangle + \langle f,-\nabla \cdot w  \rangle.
\end{align*}
Now, applying Lemma \ref{b_est_lem} and arguing as in step 3 above, we arrive at $\sup_{\varepsilon>0, n \geq 1}I_2(u^{\varepsilon,n})  \leq K\|f\|_2$, and so  (by passing to the limit $\varepsilon \downarrow 0$, using Theorem \ref{conv_appendix}, we arrive at $I_2(u)  \leq K\|f\|_2$ $\Rightarrow$
$u \in W^{2,2}$.

The proof of Theorem \ref{thm2} is completed. \qed

\section{Proof of Theorem \ref{thm3}}

Recall that a vector field $b:\mathbb R^d \rightarrow \mathbb R^d$ belongs to $\mathbf F_{\delta_1}(A)$, $\delta_1>0$, the class of form-bounded vector fields (with respect to $A \equiv A_2$), if $b_a^2:=b \cdot a^{-1} \cdot b
 \in L^1_\loc$ and there exists a constant $\lambda=\lambda_{\delta_1}>0$ such that 
$\|b_a(\lambda + A)^{-\frac{1}{2}}\|_{2\rightarrow 2}\leq \sqrt{\delta_1}.$

We will need the following auxiliary results.
Recall:  $(\nabla a)_k = \sum_{i=1}^d (\partial_{x_i} a_{ik})$, $1 \leq k \leq d$.

\begin{lemma}
\label{nabla_a_lem}
$
\nabla a=c(d-1)|x|^{-2}x\in \mathbf{F}_{\delta_0}(A)$, where $\delta_0:=4\big(\frac{d-1}{d-2}\frac{c}{1+c}\big)^2.$
\end{lemma}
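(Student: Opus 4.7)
The plan is to verify the form-bound inequality
\[
\langle b_a^2 \varphi^2 \rangle \le \delta \, \langle \nabla\varphi \cdot a \cdot \nabla\varphi \rangle, \qquad \varphi \in W^{1,2},
\]
with $b_a^2 := b \cdot a^{-1} \cdot b$ and $\delta = 4\bigl(\tfrac{d-1}{d-2}\tfrac{c}{1+c}\bigr)^2$, which by the variational characterization of $(\lambda+A)^{-1/2}$ immediately yields $\|b_a(\lambda+A)^{-1/2}\|_{2\to 2}\le \sqrt\delta$ for every $\lambda>0$.

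First, a Sherman--Morrison type computation gives
\[
a^{-1} = I - \frac{c}{1+c}\,|x|^{-2}x \otimes x,
\]
which is well defined because $c>-1$. Since $b = \nabla a = c(d-1)|x|^{-2}x$ is parallel to $x$, the matrix $a^{-1}$ acts on $b$ via its radial eigenvalue $\frac{1}{1+c}$, so
\[
b_a^2 = b \cdot a^{-1}\cdot b = \frac{1}{1+c}\,|b|^2 = \frac{c^2(d-1)^2}{(1+c)\,|x|^2}.
\]
In particular $b_a^2 \in L^1_{\loc}$.

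Next, I would prove the weighted Hardy inequality
\[
\int_{\mathbb R^d} \frac{\varphi^2}{|x|^2}\,dx \;\le\; \frac{4}{(d-2)^2(1+c)} \int_{\mathbb R^d} \nabla\varphi \cdot a \cdot \nabla\varphi\, dx.
\]
The starting point is the pointwise identity $\nabla\varphi \cdot a \cdot \nabla\varphi = |\nabla\varphi|^2 + c\,|\partial_r\varphi|^2$, where $\partial_r\varphi = |x|^{-1}x\cdot\nabla\varphi$. Since $|\partial_r\varphi|^2 \le |\nabla\varphi|^2$, for every $c>-1$ one has the pointwise bound
\[
\nabla\varphi \cdot a \cdot \nabla\varphi \;\ge\; (1+c)\,|\partial_r\varphi|^2,
\]
(both cases $c>0$ and $-1<c<0$ are immediate). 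Combining this with the classical one-dimensional Hardy bound along rays, i.e.\ $\int |x|^{-2}\varphi^2 \le \frac{4}{(d-2)^2}\int |\partial_r\varphi|^2$ (which is the standard Hardy inequality after integration by parts against the vector field $|x|^{-2}x$, of divergence $(d-2)|x|^{-2}$), gives the desired weighted Hardy inequality.

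Multiplying the weighted Hardy inequality by $\frac{c^2(d-1)^2}{1+c}$ yields
\[
\langle b_a^2\,\varphi^2\rangle \;\le\; \frac{4c^2(d-1)^2}{(d-2)^2(1+c)^2}\,\langle\nabla\varphi\cdot a\cdot\nabla\varphi\rangle = \delta\,\langle A\varphi,\varphi\rangle,
\]
which is the required form-bound with the stated $\delta$. No serious obstacle is anticipated; the only subtle point is obtaining the factor $(1+c)^{-2}$ rather than just $(1+c)^{-1}$, which arises because both the computation of $b_a^2$ and the weighted Hardy constant each contribute a factor $(1+c)^{-1}$—reflecting the fact that $b$ points in the radial direction, along which the matrix $a$ has eigenvalue $1+c$.
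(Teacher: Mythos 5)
Your proposal is correct, and its overall skeleton coincides with the paper's: compute $a^{-1}$ and $b_a^2=\frac{c^2(d-1)^2}{1+c}|x|^{-2}$, then reduce everything to the weighted Hardy inequality $(1+c)\frac{(d-2)^2}{4}\||x|^{-1}\varphi\|_2^2\le\langle\nabla\varphi\cdot a\cdot\nabla\varphi\rangle$. Where you differ is in the proof of that inequality. The paper splits off the perturbation, writes $\langle\phi,-\nabla\cdot(a-I)\cdot\nabla\phi\rangle=c\bigl(\|x\cdot\nabla(|x|^{-1}\phi)\|_2^2-(d-1)\||x|^{-1}\phi\|_2^2\bigr)$, invokes the sharp dilation bound $\|x\cdot\nabla f\|_2\ge\frac d2\|f\|_2$ (via the self-adjoint generator of dilations and the spectral theorem) to get $c\frac{(d-2)^2}{4}\||x|^{-1}\phi\|_2^2$ from this term when $c>0$, and adds the classical Hardy inequality for the Laplacian part; the case $c<0$ is handled separately as a trivial consequence of Hardy. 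You instead use the pointwise radial ellipticity $\nabla\varphi\cdot a\cdot\nabla\varphi\ge(1+c)|\partial_r\varphi|^2$ (valid for all $c>-1$ in one stroke) together with the sharp \emph{radial} Hardy inequality $\int|x|^{-2}\varphi^2\le\frac{4}{(d-2)^2}\int|\partial_r\varphi|^2$, obtained by integrating by parts against $|x|^{-2}x$. Your route is more elementary and treats both signs of $c$ uniformly, at no loss in the constant; what the paper's identity buys in addition is the observation (made explicitly there) that the constant in the Hardy-type inequality is sharp, which your pointwise bound does not by itself establish --- but sharpness is not needed for the lemma. One minor point: you should state the inequalities first for $\varphi\in C_c^\infty$ and extend to $W^{1,2}$ by density, as the weight $|x|^{-2}$ and the integration by parts against $|x|^{-2}x$ require $d\ge3$ for local integrability near the origin.
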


\begin{proof}
It is easy to see that $b:=\nabla a=c(d-1)|x|^{-2}x$,
$a^{-1} = I - \frac{c}{c+1} |x|^{-2} x \otimes x$ and $b_a^2:=b \cdot a^{-1} \cdot b = \frac{[(d-1) c ]^2}{c+1} |x|^{-2}.$ Now, that $b \in \mathbf{F}_\delta(A)$
is immediate from the following Hardy-type inequality:
\begin{equation}
\label{h_type}
(c+1) \frac{(d-2)^2}{4} \| |x|^{-1} h \|_2^2 \leq \langle \nabla h \cdot a \cdot \nabla \bar{h} \rangle, \qquad  h \in W^{1,2}(\mathbb R^d).
\tag{$\star$}
\end{equation}
It remains to prove \eqref{h_type}.
Since $\langle \phi, x \cdot \nabla \phi \rangle = - \frac{d}{2} \langle \phi, \phi \rangle, \; \phi \in C_c^\infty,$ we have
\begin{equation}
\label{blacktriangle}
\langle \phi, -\nabla \cdot (a-1) \cdot \nabla \phi \rangle = c \big(\| x \cdot \nabla (|x|^{-1} \phi) \|_2^2 - (d-1) \| |x|^{-1} \phi \|_2^2 \big) 
\end{equation}
Next, the following inequality (with the sharp constant) is valid:
\begin{equation}
\label{whitetriangle}
\| x \cdot \nabla f \|_2 \geq \frac{d}{2} \|f \|_2, \quad \quad ( f \in D(\mathcal{D}) ), \\
\end{equation}
where the operator $\mathcal{D} = (\mathcal{D} \upharpoonright C_c^\infty )^{clos}_{L^2 \to L^2},$ $\mathcal{D} \upharpoonright C_c^\infty = \frac{\sqrt{-1}}{2} ( x \cdot \nabla + \nabla \cdot x ),$ is selfadjoint. 

Indeed, by the Spectral Theorem, $\|(\mathcal{D} - \zeta)^{-1} \|_{2\to 2} =\frac{1}{|\Imag \zeta |}$ for $\Real \zeta = 0$, and hence

\[
\| x \cdot \nabla f \|_2 = \| \frac{1}{2}( x \cdot \nabla + \nabla \cdot x -d )f \|_2 = \|(\mathcal{D} - \sqrt{- 1} \; \frac{d}{2}) f \|_2 \geq \frac{d}{2} \|f \|_2, \quad \quad (f \in C_c^\infty ).
\]
\eqref{whitetriangle} is proved. 

Let $c>0.$ By \eqref{blacktriangle} and \eqref{whitetriangle},
\[
\langle \phi, -\nabla \cdot (a-1) \cdot \nabla \phi \rangle \geq c \frac{(d-2)^2}{4} \| |x|^{-1} \phi \|_2^2 \quad \quad ( \phi \in C^\infty_c ).
\]
$(\star)$ follows now from the equality $\langle \phi, - \nabla \cdot a \cdot \nabla \phi \rangle =  \langle \phi, - \nabla \cdot (a-1) \cdot \nabla \phi \rangle + \langle \phi, - \Delta \phi \rangle$ and Hardy's inequality $\langle \phi, -\Delta \phi \rangle \geq \frac{(d-2)^2}{4} \| |x|^{-1} \phi \|_2^2.$
Finally, the obvious inequality $(1+c) \langle \phi, -\Delta \phi \rangle \geq \langle \phi, - \nabla \cdot a \cdot \nabla \phi \rangle$ shows that the constant in $(\star)$ is sharp. 

If $-1 < c < 0,$  $(\star)$ is a trivial consequence of Hardy's inequality.
\end{proof}

If $0<c<d-2$, then $\nabla a \in \mathbf{F}_\delta(A)$ with $\delta<4$ by Lemma \ref{nabla_a_lem}, so $\Lambda_q(a,\nabla a)$ is well defined for all $q \in \big](1-\frac{d-1}{d-2}\frac{c}{1+c})^{-1},\infty\big[$, see \cite[Theorem 3.2]{KiS}.

If $-1<c<0$, then $\Lambda_q(a,\nabla a)$ is well defined for all $q \in \big]1,\infty\big[$ by Theorem \ref{thm:A1} (there take $b=0$). 
We have proved assertion (\textit{i}) of the theorem.

\smallskip

In order to  prove assertion (\textit{ii}), we will need the following  result.
Set 
\[
|x|_\varepsilon:=\sqrt{|x|^2+\varepsilon}, \; \varepsilon>0, \quad \chi:=|x|^2|x|_\varepsilon^{-2}.
\]

\begin{lemma}
\label{approx_prop} 
Set $a^\varepsilon(x):=I+c |x|_\varepsilon^{-2}x^{t}\cdot x$, $A^\varepsilon \equiv [-\nabla \cdot a^\varepsilon \cdot \nabla\upharpoonright C_c^\infty]^{\rm clos}_{2 \to 2}.$ If $d\geq4$, $ -1 < c \leq \frac{d-3}{2}$, or if $d=3$, $-1<c<0$,  then
\[
\nabla a^\varepsilon \in \mathbf{F}_{\delta_0}(A^\varepsilon) \quad \text{ with } \delta_0=4\bigg(\frac{d-1}{d-2}\frac{c}{1+c}\bigg)^2.
\] 
\end{lemma}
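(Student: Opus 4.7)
My plan is to mimic the proof of Lemma \ref{nabla_a_lem}, replacing $|x|$ throughout by its regularization $|x|_\varepsilon=\sqrt{|x|^2+\varepsilon}$ and invoking the Hardy-type inequality \eqref{hardy_type_ineq} wherever the original argument used the elementary identity $\langle \phi, x\cdot \nabla\phi\rangle = -\tfrac{d}{2}\|\phi\|_2^2$. A direct differentiation gives $(\nabla a^\varepsilon)_k = c(d+1-2\chi)|x|_\varepsilon^{-2}x_k$, where $\chi := |x|^2|x|_\varepsilon^{-2}\in[0,1]$, and the Sherman--Morrison formula yields $(a^\varepsilon)^{-1} = I - \frac{c|x|_\varepsilon^{-2}}{1+c\chi}\, x\otimes x$. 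Substituting into $b_{a^\varepsilon}^2 := \nabla a^\varepsilon \cdot (a^\varepsilon)^{-1}\cdot \nabla a^\varepsilon$, after the Sherman--Morrison cancellation, yields the explicit formula
\[
b_{a^\varepsilon}^2 = \frac{c^2(d+1-2\chi)^2\chi^2}{(1+c\chi)\,|x|^2}\,,
\]
which is in $L^1_{\loc}$.

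Next, following the template of \eqref{blacktriangle}--\eqref{whitetriangle}, an integration by parts gives $\langle \phi, -\nabla\cdot(a^\varepsilon - I)\cdot \nabla \phi\rangle = c\int |x|_\varepsilon^{-2}(x\cdot \nabla\phi)^2$, and Lemma \ref{hardy_type_ineq} applied with $q=2$ and with $\phi$ in place of $|w|$ yields the key estimate $\int |x|_\varepsilon^{-2} P(\chi)\phi^2 \leq \int|x|_\varepsilon^{-2}(x\cdot \nabla\phi)^2$, where $P(\chi) = \tfrac{d^2}{4} - (d+2)\chi + 3\chi^2$. Together with the classical Hardy inequality for $-\Delta$, this produces the smoothed counterpart of the inequality $(\star)$ of Lemma \ref{nabla_a_lem}:
\[
\langle \nabla h\cdot a^\varepsilon \cdot \nabla h\rangle \geq \int \frac{h^2}{|x|^2}\Bigl[\frac{(d-2)^2}{4} + c\chi P(\chi)\Bigr] \qquad (c\geq 0),
\]
with an easier analogue for $-1<c<0$, $d\geq 3$ obtained from $a^\varepsilon\geq (1+c)I$ and standard Hardy.

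The form bound $\int b_{a^\varepsilon}^2\phi^2 \leq \delta \langle \nabla\phi\cdot a^\varepsilon\cdot\nabla\phi\rangle$ with the announced $\delta = 4\bigl(\frac{d-1}{d-2}\frac{c}{1+c}\bigr)^2$ then reduces to the pointwise inequality
\[
\frac{c^2(d+1-2\chi)^2\chi^2}{1+c\chi} \leq \delta\Bigl[\frac{(d-2)^2}{4} + c\chi P(\chi)\Bigr], \qquad \chi\in[0,1].
\]
Both sides coincide at $\chi=0$ and at $\chi=1$, the latter recovering the non-smoothed Hardy computation of Lemma \ref{nabla_a_lem}; a first-order Taylor expansion around $\chi=1$ produces a correction term proportional to $d-5-4c$, and, combined with the positivity of $P(\chi)-\tfrac{(d-2)^2}{4}$ on $[0,1]$ supplied by Lemma \ref{hardy_type_ineq}, the hypothesis $c\leq\tfrac{d-3}{2}$ for $d\geq 4$ (respectively $c<0$ for $d=3$) is exactly what ensures the inequality holds on the whole interval.

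The main obstacle I anticipate is this final pointwise inequality: for $d\in\{3,4\}$ the bare quantity $(d+1-2\chi)^2\chi$ already exceeds $(d-1)^2$ in the interior of $(0,1)$, so the naive shortcut of bounding $\chi/(1+c\chi)\leq 1/(1+c)$ and then applying only the crude Hardy inequality $\langle\phi, A^\varepsilon\phi\rangle\geq (1+c)\tfrac{(d-2)^2}{4}\||x|_\varepsilon^{-1}\phi\|_2^2$ would lose a factor strictly bigger than one. The extra term $c\chi P(\chi)$ supplied by Lemma \ref{hardy_type_ineq} is exactly what compensates for this loss---but only under the stated constraint on $c$ and $d$.
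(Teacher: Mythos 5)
Your setup through the reduction to a pointwise inequality is exactly the paper's: the Sherman--Morrison computation giving $b_{a^\varepsilon}^2=\frac{c^2(d+1-2\chi)^2\chi^2}{(1+c\chi)|x|^2}$, the lower bound $\langle \nabla h\cdot a^\varepsilon\cdot\nabla h\rangle\geq \int |x|^{-2}h^2\bigl[\frac{(d-2)^2}{4}+c\chi P(\chi)\bigr]$ via \eqref{hardy_type_ineq}, and the crude bound $a^\varepsilon\geq (1+c)I$ for $c<0$ are all correct and are what the paper does. The gap is in the last step, where you actually have to verify $\frac{c^2(d+1-2t)^2t^2}{1+ct}\leq\delta\bigl[\frac{(d-2)^2}{4}+ctP(t)\bigr]$ on all of $[0,1]$. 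First, the two sides do \emph{not} coincide at $t=0$: the left side vanishes there while the right side equals $\delta\frac{(d-2)^2}{4}=\frac{(d-1)^2c^2}{(1+c)^2}>0$ (they coincide only at $t=1$). Second, a first-order Taylor expansion at $t=1$ is only a local necessary condition and cannot "ensure the inequality holds on the whole interval"; moreover it does not reproduce the hypothesis. For $d=5$ the condition $\Phi'(1)\leq 0$ (with $\Phi$ the difference of the two sides) works out to $c\leq 9/5$, strictly weaker than $\frac{d-3}{2}=1$, and the claimed factor $d-5-4c$ does not match the actual derivative. So the proposal never establishes the inequality, nor does it locate where $c\leq\frac{d-3}{2}$ genuinely enters.

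The paper's mechanism, which you should substitute for the Taylor argument, is global: for $d\geq 4$ one has $P(t)\geq\frac{(d-2)^2}{4}$ on $[0,1]$ (the difference is $(d-1)-(d+2)t+3t^2$, whose roots are $\frac{d-1}{3}$ and $1$ --- an elementary fact, not something supplied by Lemma \ref{hardy_type_ineq}), whence for $c>0$ the bracket is $\geq\frac{(d-2)^2}{4}(1+ct)^2$ and the target inequality reduces to $\sup_{0\leq t\leq1}\frac{(d+1-2t)t}{1+ct}=\frac{d-1}{1+c}$. That supremum identity is where the hypothesis lives: the quadratic $(d-1)(1+ct)-(1+c)\bigl((d+1)t-2t^2\bigr)$ has roots $1$ and $\frac{d-1}{2(1+c)}$, so it is nonnegative on $[0,1]$ precisely when $c\leq\frac{d-3}{2}$. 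For $-1<c<0$ no compensation from $c\chi P(\chi)$ is used at all (contrary to your closing paragraph): one keeps the full factor $t^2$, uses $1+c\leq1+ct$, and $\sup_{[0,1]}(d+1-2t)t=d-1$ for every $d\geq3$; your worry that "$(d+1-2\chi)^2\chi$ exceeds $(d-1)^2$" stems from splitting off only one power of $\chi$, whereas the correct grouping $\bigl[(d+1-2t)t\bigr]^2\leq(d-1)^2$ causes no loss.
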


\begin{proof}
1.~First, let $c>0$. Note that $(a^\varepsilon)^{-1}(x)=I-\frac{c\chi}{1+c\chi}|x|^{-2} x \otimes x$, $(\nabla a^\varepsilon)=c\chi(d+1-2 \chi)|x|^{-2} x$ and 
\smallskip

(a) $(\nabla a^\varepsilon) \cdot (a^\varepsilon)^{-1} \cdot (\nabla a^\varepsilon)=\frac{(c\chi)^2[(d+1-2\chi)]^2}{1+c\chi} |x|^{-2}.$

(b) $\langle -\nabla \cdot a^\varepsilon \cdot \nabla h,h\rangle = \langle (\nabla h)^2 \rangle +c \langle |x|_\varepsilon^{-2} (x \cdot \nabla h)^2\rangle 
\geq \frac{(d-2)^2}{4} \langle |x|^{-2}h^2\rangle + c \big\langle\big(\frac{d^2}{4}-(d+2)\chi+3\chi^2\big)\chi|x|^{-2}h^2\big\rangle$
$h \in C_c^\infty$, see \eqref{hardy_type_ineq}.

\smallskip

Combining (a) and (b), we obtain that $\nabla a^{\varepsilon} \in \mathbf{F}_{\delta^\varepsilon}(A^\varepsilon)$ for any $\delta^\varepsilon$ such that
\begin{align*}
&\delta^\varepsilon \biggl\langle\biggl[\frac{(d-2)^2}{4} + c\biggl(\frac{d^2}{4}-(d+2)\chi+3\chi^2\biggr)\chi\biggr]|x|^{-2}h^2\biggr\rangle  \\
&\geq \biggl\langle\frac{(c\chi)^2[(d+1-2\chi)]^2}{1+c\chi} |x|^{-2}h^2 \biggr\rangle, \quad h \in C_c^\infty;
\end{align*}
we can take
$$
\delta^\varepsilon:=\sup_{0\leq t \leq 1}\frac{(ct)^2[d+1-2 t]^2}{(1+c t)\biggl[\frac{(d-2)^2}{4} + ct\biggl(\frac{d^2}{4}-(d+2) t + 3 t^2\biggr)\biggr]}
$$

Let us show that
\begin{equation*}
\delta^\varepsilon = 4\biggl(\frac{d-1}{d-2}\frac{c}{1+c}\biggr)^2\equiv \delta_0,
\end{equation*}
which would imply that $\nabla a^{\varepsilon} \in \mathbf{F}_{\delta_0}(A^\varepsilon)$, as claimed.

Note that $\frac{d^2}{4}-(d+2) t + 3 t^2 \geq \frac{(d-2)^2}{4}$ for all $0\leq  t\leq 1$ and $d\geq 4$. Thus
\[
\delta_0 \leq \delta^\varepsilon \leq 4\sup_{0\leq t \leq 1}\frac{c^2[d+1-2 t]^2t^2}{(d-2)^2(1 + ct)^2}= 4 \bigg[\frac{c}{d-2}\sup_{0\leq t \leq 1}\frac{(d+1-2 t)t}{1 + ct}\bigg]^2 =\delta_0.
\]

2. Let $-1<c<0$. Then $\langle -\nabla \cdot a^\varepsilon \cdot \nabla h,h\rangle \geq \frac{(d-2)^2}{4}(1+c)\langle |x|^{-2}h^2\rangle$, so by (a) above $\nabla a^{\varepsilon} \in \mathbf{F}_{\delta^\varepsilon}(A^\varepsilon)$ for any $\delta^\varepsilon$ such that
$$
\delta^\varepsilon \frac{(d-2)^2}{4}(1+c) \langle |x|^{-2}h^2\rangle \geq \biggl\langle\frac{c^2[(d+1-2\chi)]^2}{1+c\chi}\chi^2 |x|^{-2}h^2 \biggr\rangle, \quad h \in C_c^\infty;
$$
we can take
$$
\delta^\varepsilon:=\sup_{0 \leq t \leq 1}\frac{c^2(d+1-2t)^2t^2}{\frac{(d-2)^2}{4}(1+c)(1+ct)}.
$$
Finally, since $1+c \leq 1+ct$,
\[
\delta_0 \leq \delta_\varepsilon \leq 4 \big[\frac{c}{(1+c)(d-2)}\sup_{0<t<1}(d+1-2 t)t\big]^2 =\delta_0.
\]  
\end{proof}

\medskip

\textbf{1.~}We start the proof of assertion (\textit{ii}) of the theorem. Let $d \geq 4$. We follow closely the proof of Theorem \ref{thm1}. 
Set $u^\varepsilon = (\mu + \Lambda_q(a^\varepsilon, \nabla a^{\varepsilon}))^{-1} f$, $0 \leq f \in C_c^1$. Since $a^{\varepsilon} \in C^\infty$, we have $ 0 \leq u^\varepsilon \in W^{3,q}$. Below
$$
w \equiv w^\varepsilon:=\nabla u^\varepsilon, \quad \phi:=- \nabla \cdot (w |w|^{q-2}),
$$
$$
I_q:=\langle (\nabla_r w)^2 |w|^{q-2}  \rangle, \quad
J_q:=\langle (\nabla |w|)^2 |w|^{q-2}\rangle,$$
$$
\bar{I}_{q,\chi}:= \langle  \bigl( x \cdot \nabla w \bigr)^2 \chi|x|^{-2} |w|^{q-2}\rangle, \quad
\bar{J}_{q,\chi}:= \langle   (x \cdot \nabla |w|)^2 \chi|x|^{-2}|w|^{q-2} \rangle,
$$
$$H_{q,\chi}:=\langle \chi|x|^{-2}|w|^q \rangle, \qquad G_{q,\chi^2}:=\langle \chi^2|x|^{-4} (x \cdot w)^2
|w|^{q-2}\rangle,
$$
where $\chi=|x|^2|x|_\varepsilon^{-2}$. 
We will need

\begin{lemma}[The basic equalities, non-divergence form]
\label{basic_ineq_lemma_nondiv}
\begin{align}
&\mu \langle |w|^q \rangle + I_q^{} + c\bar{I}_{q,\chi}^{} + (q-2)(J_q^{} + c\bar{J}_{q,\chi}^{}) 
 - c\frac{d(d-1)}{q}H_{q,\chi}^{}  \notag \\
&+2c\frac{2d+1}{q}H_{q,\chi^2}-\frac{8c}{q}H_{q,\chi^3} =  \beta_1^{} + \langle f, \phi^{} \rangle
\tag{${\rm BE}^{{\rm nd}}_+$},
\label{be_+_nondiv}
\end{align}
\begin{align}
&\mu \langle |w|^q \rangle + I_q^{} + c\bar{I}_{q,\chi}^{} + (q-2)(J_q^{} + c\bar{J}_{q,\chi}^{})
 - c\frac{d(d-1)}{q}H_{q,\chi}^{} - c(d-2)G_{q,\chi^2} \notag \\
&+2c\frac{2d+1}{q}H_{q,\chi^2}-\frac{8c}{q}H_{q,\chi^3} -4c\varepsilon \langle |x|_\varepsilon^{-6}(x \cdot w)^2|w|^{q-2}\rangle = - \frac{1}{2}\beta_2^{}+ \langle f, \phi^{} \rangle,
\tag{${\rm BE}^{{\rm nd}}_-$} 
\label{be_-_nondiv}
\end{align}
where 
$$
\beta_1^{}:=-2c\langle |x|_\varepsilon^{-4} x \cdot w, x \cdot (x \cdot \nabla w)|w|^{q-2}\rangle, \qquad \beta_2^{}:=-2c(q-2)\langle |x|_\varepsilon^{-4} (x \cdot w)^2 x \cdot \nabla |w|,|w|^{q-3} \rangle.
$$
\end{lemma}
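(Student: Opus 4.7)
The strategy is to bootstrap from Lemma \ref{basic_ineq_lemma}. Since $\Lambda_q(a^\varepsilon,\nabla a^\varepsilon) = A_q^\varepsilon + (\nabla a^\varepsilon) \cdot \nabla$, the equation $(\mu + \Lambda_q(a^\varepsilon,\nabla a^\varepsilon))u^\varepsilon = f$ is equivalent to the divergence-form equation
$$
(\mu + A_q^\varepsilon) u^\varepsilon = f - (\nabla a^\varepsilon) \cdot w.
$$
Applying Lemma \ref{basic_ineq_lemma} to this rewriting (with $f$ replaced by $f - (\nabla a^\varepsilon) \cdot w$) produces an identity whose LHS coincides with the LHS of \eqref{be_+} and whose RHS reads $\beta_1 + \langle f, \phi\rangle - \langle (\nabla a^\varepsilon) \cdot w, \phi\rangle$; the analogous rewriting works for \eqref{be_-}. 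Thus the task reduces to computing $\langle (\nabla a^\varepsilon) \cdot w, \phi\rangle$ and moving it to the LHS in a form that combines cleanly with the existing $H_{q,\chi^k}$, $G_{q,\chi^k}$ and $\varepsilon$-terms.

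Using $\nabla a^\varepsilon = c|x|_\varepsilon^{-2}[(d+1) - 2\chi]\,x$ and the integration by parts
$$
\langle (\nabla a^\varepsilon) \cdot w, \phi\rangle = \langle \nabla[(\nabla a^\varepsilon) \cdot w], w|w|^{q-2}\rangle,
$$
I expand the gradient via the product rule. Because $w = \nabla u$, symmetry of the second derivatives yields $\sum_k (x \cdot \nabla w)_k\,w_k = |w|(x \cdot \nabla |w|) = q^{-1}\,x \cdot \nabla |w|^q$, and a further integration by parts against $\nabla \cdot (x|x|_\varepsilon^{-2})$ and $\nabla \cdot (\chi x|x|_\varepsilon^{-2})$ (cf.\,the analogous step in the proof of Lemma \ref{basic_ineq_lemma}) converts the resulting middle term into $H$-integrals. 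The outcome I expect is
\begin{align*}
\langle (\nabla a^\varepsilon) \cdot w, \phi \rangle &= c(d+1)H_{q,\chi} - 2cH_{q,\chi^2} - \frac{cd(d+1)}{q}H_{q,\chi} + \frac{2c(2d+3)}{q}H_{q,\chi^2} \\
&\quad - \frac{8c}{q}H_{q,\chi^3} - 2c(d+3) G_{q,\chi^2} + 8c G_{q,\chi^3}.
\end{align*}

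Combining this with the LHS of \eqref{be_+} and using the identity $8c\varepsilon\langle |x|_\varepsilon^{-6}(x\cdot w)^2|w|^{q-2}\rangle = 8c(G_{q,\chi^2} - G_{q,\chi^3})$, all $G$-type contributions cancel: $-8c G_{q,\chi^2} + 8c G_{q,\chi^3} + 8c(G_{q,\chi^2} - G_{q,\chi^3}) = 0$, and elementary arithmetic reduces the surviving $H$-coefficients to $-cd(d-1)/q$, $2c(2d+1)/q$, $-8c/q$, yielding \eqref{be_+_nondiv}. The parallel computation starting from \eqref{be_-} produces a residual $-c(d+2)G_{q,\chi^2} + 4c G_{q,\chi^3}$, which by the same identity in reverse rewrites as $-c(d-2)G_{q,\chi^2} - 4c\varepsilon\langle|x|_\varepsilon^{-6}(x\cdot w)^2|w|^{q-2}\rangle$, giving \eqref{be_-_nondiv}.

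The main obstacle is the bookkeeping: applying the product rule to $|x|_\varepsilon^{-2}[(d+1) - 2\chi](x \cdot w)$ generates several families of terms (of orders $|x|_\varepsilon^{-2}$, $|x|_\varepsilon^{-4}$, $|x|_\varepsilon^{-6}$, with varying powers of $\chi$ and of $x \cdot w$), and one must carefully collect coefficients to verify both the clean cancellation of all $G_{q,\chi^k}$ contributions in \eqref{be_+_nondiv} and the emergence of the precise coefficient $-c(d-2)$ in \eqref{be_-_nondiv}.
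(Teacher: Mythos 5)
Your proposal is correct and takes essentially the same route as the paper: both reduce \eqref{be_+_nondiv}, \eqref{be_-_nondiv} to the divergence-form basic equalities plus the extra term $\langle (\nabla a^\varepsilon)\cdot w, \phi\rangle$, evaluate that term by integration by parts, and your stated value agrees with the paper's (you merely rewrite the $\varepsilon$-weighted integrals as $G_{q,\chi^2}-G_{q,\chi^3}$ via $\varepsilon|x|_\varepsilon^{-2}=1-\chi$), after which your cancellation of the $G$-terms in \eqref{be_+_nondiv} and the residual $-c(d-2)G_{q,\chi^2}-4c\varepsilon\langle|x|_\varepsilon^{-6}(x\cdot w)^2|w|^{q-2}\rangle$ in \eqref{be_-_nondiv} check out. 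Only a cosmetic slip: $\sum_k (x\cdot\nabla w)_k w_k=|w|\,(x\cdot\nabla|w|)$ equals $q^{-1}x\cdot\nabla|w|^q$ only after multiplication by $|w|^{q-2}$.
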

\begin{proof}[Proof of Lemma \ref{basic_ineq_lemma_nondiv}]
We modify the proof of Lemma \ref{basic_ineq_lemma}.
In the left-hand side of \eqref{be_+}, \eqref{be_-} we have the extra term $\langle (\nabla a^\varepsilon) \cdot w,-\nabla \cdot (w|w|^{q-2})\rangle$, which we evaluate as follows:
\begin{align*}
&\langle (\nabla a^\varepsilon) \cdot w,-\nabla \cdot (w|w|^{q-2})\rangle \\
& \text{(we integrate by parts)} \\
&=c(d-1)\bigl(H_{q,\chi}^{} + \langle |x|_\varepsilon^{-2} x \cdot \nabla |w|,|w|^{q-1}\rangle - 2G_{q,\chi^2}^{} \bigr) \\
&+2c\varepsilon\bigl(\langle |x|_\varepsilon^{-4}|w|^q\rangle + \langle |x|_\varepsilon^{-4} x \cdot \nabla |w|,|w|^{q-1}\rangle  - 4 \langle |x|_\varepsilon^{-6}(x \cdot w)^2 |w|^{q-2}\rangle \bigr).
\end{align*}
Note that
$$\langle |x|_\varepsilon^{-2} x \cdot \nabla |w|,|w|^{q-1}\rangle= \frac{1}{q}\langle |x|_\varepsilon^{-2} x \cdot \nabla |w|^q\rangle=-\frac{d-2}{q} H_{q,\chi}^{}- \frac{2}{q}\varepsilon \langle |x|_\varepsilon^{-4}|w|^q\rangle,
$$
$$\langle |x|_\varepsilon^{-4}x\cdot\nabla|w|,|w|^{q-1} \rangle=\frac{1}{q}\langle |x|_\varepsilon^{-4}x \cdot \nabla |w|^q\rangle=-\frac{1}{q}\langle |w|^q \nabla \cdot (x|x|_\varepsilon^{-4})\rangle
=-\frac{d-4}{q}\langle |x|_\varepsilon^{-4}|w|^q\rangle  -\frac{4}{q}\varepsilon \langle |x|_\varepsilon^{-6}|w|^q\rangle.
$$
Thus,
\begin{align*}
&\big\langle (\nabla a^\varepsilon) \cdot w,-\nabla \cdot (w|w|^{q-2})\big\rangle\\
&=c(d+1)\left(1-\frac{d}{q}\right)H_{q,\chi}+c\biggl(-2+\frac{2}{q}(2d+3) \biggr)H_{q,\chi^2}-\frac{8c}{q}H_{q,\chi^3}\\
& -2c(d-1)G_{q,\chi^2} -8c\varepsilon \big\langle|x|_\varepsilon^{-6}(x \cdot w)^2|w|^{q-2}\big\rangle.
\end{align*}
The latter, added to the left-hand side of \eqref{be_+}, \eqref{be_-} yields \eqref{be_+_nondiv}, \eqref{be_-_nondiv}.
\end{proof}

\textbf{2.~}We estimate from above the term $\langle f, \phi \rangle$ in the right-hand side of \eqref{be_+_nondiv}, \eqref{be_-_nondiv} employing an evident analogue of Lemma \ref{f_est_lem}:
\begin{equation}
\label{f_phi_est_again}
\langle f, \phi \rangle \leq  \varepsilon_0  (I_{q}^{} +  J^{}_{q} + H_q) + C(\varepsilon_0) \|w\|_q^{q-2} \|f\|^2_q.
\end{equation}
Again we choose $\varepsilon_0>0$ so small that in the estimates below we can ignore the terms multiplied by $\varepsilon_0$.

\medskip

\textbf{3.}~We will use \eqref{be_+_nondiv}, \eqref{be_-_nondiv} and \eqref{f_phi_est_again} to establish the inequality
\begin{equation}
\label{req_est4__}
\mu \langle |w|^q \rangle + \eta J_q^{} \leq C\|w\|_q^{q-2} \|f\|^2_q, \qquad C=C(\varepsilon_0), \qquad \eta=\eta(q,d,\varepsilon_0)>0.
\end{equation}

\textit{\textbf{Case $c>0$}}. By the assumptions of the theorem, $c<\frac{d-3}{2} \wedge \frac{d-2}{q-d+2}$.
In \eqref{be_+_nondiv}, we estimate
$$\beta_1 \leq c\theta \bar{I}_{q,\chi} + c\theta^{-1}G_{q,\chi^2}, \;\; \theta >0,$$ 
and then
apply \eqref{f_phi_est_again} to obtain
\begin{align*}
&\mu\langle |w|^q\rangle + I_q +c(1-\theta)\bar{I}_{q,\chi} + (q-2)J_q + c(q-2)\bar{J}_{q,\chi} -c\frac{d(d-1)}{q}H_{q,\chi} \\
&+ 2c\frac{2d+1}{q}H_{q,\chi^2}  - \frac{8c}{q} H_{q,\chi^3} -\frac{c}{\theta}G_{q,\chi^2} \leq C\|w\|_q^{q-2} \|f\|^2_q.
\end{align*}
We exclude the case $0<\theta\leq 1$ by noting that $\bar{I}_{q,\chi} \geq \frac{(d-2)^2}{q^2}G_{q,\chi^2}$
and $f(\theta)=(1-\theta)\frac{(d-2)^2}{q^2}-\frac{1}{\theta}$ achieves its maximum at $\theta= \frac{q}{d-2} >1$.

Let $\theta >1$. Clearly we have to assume now that $1+c(1-\theta)>0$. Since $I_q+c(1-\theta)\bar{I}_{q,\chi} \geq (1+c(1-\theta))I_q \geq (1+c(1-\theta))J_q$ and $H_{q,\chi^2} \geq G_{q,\chi^2}$ we have
\begin{align*}
&\mu\langle |w|^q\rangle +(q-1+c(1-\theta))J_q + c(q-2)\bar{J}_{q,\chi} -c\frac{d(d-1)}{q}H_{q,\chi} \\
&+ 2c\frac{2d+1}{q}H_{q,\chi^2}  - \frac{8c}{q} H_{q,\chi^3} -\frac{c}{\theta}H_{q,\chi^2} \leq C\|w\|_q^{q-2} \|f\|^2_q.
\end{align*} 
Using $\bar{J}_{q,\chi} \geq \frac{4}{q^2}\left(\frac{d^2}{4}H_{q,\chi}-(d+2)H_{q,\chi^2}+ 3H_{q,\chi^3}\right)$,  see \eqref{hardy_type_ineq}, we obtain
\begin{align*}
&\mu\langle |w|^q\rangle + (q-1+c(1-\theta))J_q + c(q-2)\frac{4}{q^2}\left(\frac{d^2}{4}H_{q,\chi}-(d+2)H_{q,\chi^2} + 3H_{q,\chi^3} \right) -c\frac{d(d-1)}{q}H_{q,\chi} \\
&+ 2c\frac{2d+1}{q}H_{q,\chi^2}  - \frac{8c}{q} H_{q,\chi^3} -\frac{c}{\theta}H_{q,\chi^2} \leq C\|w\|_q^{q-2} \|f\|^2_q.
\end{align*} 
Thus, by $J_q \geq \frac{(d-2)^2}{q^2}\langle |x|^{-2}|w|^q\rangle$, for all $\eta>0$ sufficiently small,
\begin{align*}
\mu\langle |w|^q \rangle 
+ \eta J_q  + \biggl\langle\biggl[(-\eta+q-1+c(1-\theta))\frac{(d-2)^2}{q^2}+c M(\chi)  \biggr]|x|^{-2}|w|^q\biggr\rangle \leq C\|w\|_q^{q-2} \|f\|^2_q,
\end{align*} 
where 
$$
M(\chi):=\biggl[(q-2)\frac{4}{q^2}\left(\frac{d^2}{4}-(d+2)\chi + 3\chi^2 \right) -\frac{d(d-1)}{q}
+ 2\frac{2d+1}{q}\chi  - \frac{8}{q} \chi^2 -\frac{1}{\theta}\chi\biggr]\chi.
$$
Select $\theta:=\frac{q}{d-2}$. (Motivation: estimating the terms involving $\theta$ from below by 
$\bigl[-c\theta \frac{(d-2)^2}{q^2} - \frac{c}{\theta}\bigr]H_q$ and maximizing the latter in $\theta$, we arrive at $\theta=\frac{q}{d-2}$.) Then, since $c<\frac{d-2}{q-d+2}$, we have  $1+c(1-\theta)>0$.
Elementary arguments show that
\[
\min_{0\leq t \leq 1}M(t)=M(1)<0,
\]
and so
\begin{align*}
\mu\langle |w|^q \rangle 
+ \eta J_q  + \biggl[(-\eta+q-1)\frac{(d-2)^2}{q^2}-c \ell_1^{{\rm nd}} \biggr]H_q \leq C\|w\|_q^{q-2} \|f\|^2_q,
\end{align*} 
where $\ell^{{\rm nd}}_1:=\frac{q+d-2}{q^2}(d-2)$.
By the assumption $c<\frac{d-3}{2}$ of the theorem, there exists $\eta>0$ such that $(-\eta+q-1)\frac{(d-2)^2}{q^2}-c \ell_1^{{\rm nd}} \geq 0$. Thus \eqref{req_est4__} is proved.

\smallskip

\textit{\textbf{Case $-1<c<0$.}} By the assumptions of the theorem, $-\bigl(1+\frac{1}{4}\frac{q}{d-2}\frac{q-2}{q-1}\frac{q-2}{q+d-3}\bigr)^{-1}<c<0$. ~Set $s:=|c|$. In \eqref{be_-_nondiv} we estimate ($\theta>0$)
$$
|\beta_2^\varepsilon| \leq 2s(q-2)\bigl(\theta\bar{J}_q  + 4^{-1}\theta^{-1} G_{q,\chi^2}\bigr),
$$
obtaining
\begin{align*}
&\mu \langle |w|^q\rangle + I_q - s\bar{I}_{q,\chi} + (q-2)(J_q - s(1+\theta) \bar{J}_{q,\chi}) + s\frac{d(d-1)}{q}H_{q,\chi} \\
&  - 2s \frac{2d+1}{q}H_{q,\chi^2}+\frac{8s}{q}H_{q,\chi^3} +s\left(d+2-(q-2)\frac{1}{4\theta}\right) G_{q,\chi^2}- 4sG_{q,\chi^3}  \leq  C\|w\|_q^{q-2} \|f\|^2_q.
\end{align*}
Then by the obvious inequalities $I_q - s\bar{I}_{q,\chi} \geq (1-s)J_q$ and $J_q - s(1+\theta) \bar{J}_{q,\chi} \geq (1-s(1+\theta))J_q$,
\begin{align}
\label{req_ineq4}
\tag{$\star$}
&\mu \langle |w|^q\rangle + \bigl(q-1-s- s(q-2)(1+\theta)\bigr)J_q
 + s\frac{d(d-1)}{q}H_{q,\chi} \\
& - 2s \frac{2d+1}{q}H_{q,\chi^2}+\frac{8s}{q}H_{q,\chi^3} +s\left(d+2-(q-2)\frac{1}{4\theta}\right) G_{q,\chi^2}  - 4sG_{q,\chi^3}  \leq  C\|w\|_q^{q-2} \|f\|^2_q. \notag
\end{align}
Note that
$
d(d-1)-2(2d+1)t + 8t^2 \geq 0, \; (d \geq 3, \; 0 \leq t \leq 1)$,
and so 
$$
\frac{d(d-1)}{q}H_{q,\chi}  - 2 \frac{2d+1}{q}H_{q,\chi^2}+\frac{8}{q}H_{q,\chi^3} \geq \frac{d(d-1)}{q}G_{q,\chi}  - 2 \frac{2d+1}{q}G_{q,\chi^2}+\frac{8}{q}G_{q,\chi^3}.
$$
Therefore, we obtain from \eqref{req_ineq4}
\begin{align*}
&\mu \langle |w|^q\rangle + \bigl(q-1-s- s(q-2)(1+\theta)\bigr)J_q
 + s\frac{d(d-1)}{q}G_{q,\chi} \\
& - 2s \frac{2d+1}{q}G_{q,\chi^2}+\frac{8s}{q}G_{q,\chi^3} +s\left(d+2-(q-2)\frac{1}{4\theta}\right) G_{q,\chi^2}  - 4sG_{q,\chi^3}  \leq  C\|w\|_q^{q-2} \|f\|^2_q, \notag
\end{align*}  
i.e.
\begin{align*}
\mu \langle |w|^q\rangle + \bigl[q-1-s- s(q-2)(1+\theta)\bigr]J_q
- s \bigl\langle M(\chi) |x|^{-4}(x \cdot w)^2|w|^{q-2}\bigr\rangle \leq  C\|w\|_q^{q-2} \|f\|^2_q,
\end{align*}
where 
$$
M(\chi):=q^{-1}\big[\mathfrak a\chi^2+\mathfrak b\chi+\mathfrak c_0\big]\chi.
$$
$$
\mathfrak a:=4(q-2), \quad \mathfrak b:=2(2d+1)-q\biggl(d+2-(q-2)\frac{1}{4\theta}\biggr), \quad \mathfrak c_0:=-d(d-1).
$$
Select $\theta:=\frac{1}{4}\frac{q}{d-2}\frac{q-2}{q+d-3}$ if $q>2$. Then $M(0)=M(1)=\max_{0\leq t\leq1} M(t) = 0$. This is the best possible choice of $\theta$. (Selecting a larger $\theta$, so that $\max_{0\leq t\leq1} M(t)<0$, decreases the term $[\dots]J_q$. On the other hand, selecting a smaller $\theta$, so that $\max_{0\leq t\leq1} M(t)>0$, leads to constraints on $c$ which are sub-optimal, i.e.\,which can be improved by selecting a larger $\theta$.)

Note that $q-1-s- s(q-2)(1+\theta) >0$ by the assumptions of the theorem. Thus, 
\begin{align*}
&\mu \langle |w|^q\rangle  +\big(q-1-s- s(q-2)(1+\theta)\big) J_q \leq  C\|w\|_q^{q-2} \|f\|^2_q,
\end{align*}
and hence \eqref{req_est4__} is proved for $q > 2$. 

We are left to treat the case $d=4$ and $q=2$. Note that the proof above still works. See also a proof of (iii) below. 

\smallskip

\textbf{4.~}For $d \geq 4$, the Sobolev Embedding Theorem and Theorem \ref{conv_appendix}(\textit{ii}) (with $\delta=0$)  now yield estimates \eqref{reg_est000} and convergence \eqref{good_sol_conv}. The proof of (\textit{ii}) is completed.

\smallskip

\textit{Proof of {\rm(}\textit{iii}{\rm)}}. Let $q=2$, $d \geq 3$. If $c<0$, then we can argue as in steps 1-3 obtaining
$$
\sup_{\varepsilon>0}I_2(u^\varepsilon) \leq K\|f\|_2, 
\quad \text{ and so } I_2(u)  \leq K\|f\|_2 \quad \Rightarrow u \in W^{2,2}.$$

Now, let $c>0$. By Lemma \ref{nabla_a_lem},
$
\nabla a \in \mathbf{F}_{\delta_0}(A)$, $\delta_0=4\big(\frac{d-1}{d-2}\frac{c}{1+c}\big)^2.$ 
Since $c<\frac{d-2}{d}$, we have $\delta_0<1$, and so $\Lambda_2(a,\nabla a)$ is well defined. By the Miyadera Perturbation Theorem and Theorem \ref{thm1}, $D(\Lambda_2(a,\nabla a))=D(A_2) \subset W^{2,2}$, and
$u:=(\mu+\Lambda_2(a,\nabla a))^{-1}f$, $\mu>0$, $f \in L^2$, belongs to $W^{2,2}$. Multiplying $(\mu  + A_2 + \nabla a\cdot\nabla) u^{}= f $ by $\phi_{m}:=- E_m\nabla \cdot w$, where $w:=\nabla u$, $E_m=(1-m^{-1}\Delta)^{-1}$, $m \geq 1$, and integrating by parts we have (omitting the summation sign in the repeated indices):
\begin{align*}
\tag{$\star$}
\mu \langle |w|^2 \rangle + \langle a \cdot \nabla w_r, E_m \nabla w_r\rangle + \langle-(\nabla_r a) \cdot w, E_m\nabla w_r)\rangle  +\langle \nabla a\cdot w, \phi_{m}^{} \rangle  = \langle f, \phi_{m}^{} \rangle,
\end{align*}

Now we pass in ($\star$) to the limit $m \rightarrow \infty$.
 
Then following closely the proof of \eqref{be_+_nondiv} for $q=2$ we obtain: 
\begin{align*}
\mu \langle |w|^2 \rangle + I_2^{} + c\bar{I}_{2}^{} 
 - \frac{c}{2}(d-2)(d-3)H_{2} = \beta  + \langle f, \phi^{} \rangle,
\end{align*}
where 
$I_2:= \langle \nabla w_r,\nabla w_r \rangle$, $\bar{I}_2:= \langle  \bigl( x \cdot \nabla w \bigr)^2 |x|^{-2} \rangle$, $H_2:=\langle |x|^{-2}|w|^2 \rangle$, $\beta:=-2c\langle |x|^{-4} x \cdot w, x \cdot (x \cdot \nabla w)\rangle$. 

Using the inequalities $\beta \leq c\bar{I}_2 +cH_2$, $\frac{(d-2)^2}{4}H_2\leq I_2$, we have
\[
\mu \langle |w|^2 \rangle +\bigg[1-\frac{4}{(d-2)^2}\bigg(1+\frac{(d-2)(d-3)}{2}\bigg)c\bigg]I_2 \leq \langle f, \phi^{} \rangle.
\]
The proof of (iii) follows.

\smallskip

The proof of Theorem \ref{thm3} is completed. \qed

\section{Proof of Theorem \ref{thm4}}

 We follow closely the proofs of Theorems \ref{thm2}, \ref{thm3}. 

\smallskip

\textit{Proof of (i)}.
It is easily seen that if $b \in \mathbf{F}_\delta$, then 
$b \in \mathbf{F}_{\delta_1}(A)$, where $\delta_1:=\delta$ if $c>0$, and $\delta_1:=\delta (1+c)^{-2}$  if $-1<c<0$. Further, by Lemma \ref{nabla_a_lem}, 
$\nabla a=c(d-1)|x|^{-2}x\in \mathbf{F}_{\delta_0}(A)$, where $\delta_0:=4\big(\frac{d-1}{d-2}\frac{c}{1+c}\big)^2.$
Now, set as in the formulation of assertion (\textit{i}),
$$
\sqrt{\delta_2}:=\left\{
\begin{array}{ll}
\sqrt{\delta_1}+\sqrt{\delta_0}, & 0<c<d-2, \\
\sqrt{\delta_1}, & -1<c<0.
\end{array}
\right.
$$
For $c>0$, we have by our assumption $\delta_2<4$ if $c>0$, so by \cite[Theorem 3.2]{KiS} the formal differential expression $-a \cdot \nabla^2 + b\cdot \nabla$ ($\equiv -\nabla \cdot a \cdot \nabla +  (\nabla a)\cdot \nabla +  b \cdot \nabla$) has an operator realization  $\Lambda_q(a,\nabla a + b)$ in $L^q$, $q \in \big[\frac{2}{2-\sqrt{\delta_2}}, \infty\big[$, as the (minus) generator of a  positivity preserving $L^\infty$ contraction quasi contraction $C_0$ semigroup; moreover, $(\mu+\Lambda_q(a,b))^{-1}$ is well defined on $L^q$ for all $\mu>\frac{\lambda \delta_2}{2(q-1)}$. In case $c<0$, we apply Theorem \ref{thm:A1}.
This completes the proof of (\textit{i}).

\smallskip

\textit{Proof of {\rm(}ii{\rm)}}. Let $d \geq 4$. Set
$
a^\varepsilon:=I+c|x|^{-2}_\varepsilon x \otimes x$, $|x|_\varepsilon:=\sqrt{|x|^2+\varepsilon}$,  $\varepsilon>0$. Put $A^\varepsilon=A(a^\varepsilon)$. It is clear that $b \in \mathbf{F}_{\delta_1}(A^\varepsilon)$ for all $\varepsilon>0$.

Let $\mathbf 1_n$ denote the indicator of  $\{x \in \mathbb R^d \mid  \; |x| \leq n,  |b(x)| \leq n \}$, and set $b_n := \gamma_{\epsilon_n} \ast \mathbf 1_n b \in C^\infty$, where $\gamma_{\epsilon}$ is the K.\,Friedrichs mollifier, $\epsilon_n \downarrow 0$.
Since our assumptions on $\delta$ and thus $\delta_1$ involve strict inequalities only, we can select $\epsilon_n \downarrow 0$ so that   $b_n \in \mathbf{F}_{\delta_1}(A^\varepsilon)$, $\varepsilon>0$, $n \geq 1$. Next, note that by the assumptions of the theorem $-1<c<\frac{d-3}{2}$, and hence by Lemma \ref{approx_prop}, $\nabla a^\varepsilon \in \mathbf{F}_{\delta_0}(A^\varepsilon)$, $\varepsilon>0$. 
Thus, in view of the discussion above, $(\mu+\Lambda_q(a^\varepsilon,\nabla a^\varepsilon + b_n))^{-1}$ is well defined on $L^q$, $\mu>\frac{\lambda\delta_2}{2(q-1)}$, $\varepsilon>0$, $n \geq 1$.
Here $\Lambda_q(a^\varepsilon, \nabla a^\varepsilon + b_n)=-\nabla \cdot a^\varepsilon \cdot \nabla +(\nabla a^\varepsilon)\cdot\nabla + b_n \cdot \nabla$, $D(\Lambda_q(a^\varepsilon, \nabla a^\varepsilon + b_n))=W^{2,q}$.

Set $u \equiv u^{\varepsilon,n} = (\mu + \Lambda_q(a^\varepsilon, \nabla a^{\varepsilon} + b_n))^{-1} f$, $0 \leq f \in C_c^1$. Then $u \in W^{3,q}$. Below
$$
w \equiv w^{\varepsilon,n}:=\nabla u^{\varepsilon,n}, \quad \phi:=- \nabla \cdot (w |w|^{q-2}),
$$
$$
I_q:=\langle (\nabla_r w)^2 |w|^{q-2}  \rangle, \quad
J_q:=\langle (\nabla |w|)^2 |w|^{q-2}\rangle,$$
$$
\bar{I}_{q,\chi}:= \langle \bigl( x \cdot \nabla w \bigr)^2 \chi|x|^{-2} |w|^{q-2} \rangle, \quad
\bar{J}_{q,\chi}:= \langle  (x \cdot \nabla |w|)^2 \chi|x|^{-2} |w|^{q-2} \rangle,
$$
$$H_{q,\chi}:=\langle \chi|x|^{-2}|w|^q \rangle, \qquad G_{q,\chi^2}:=\langle \chi^2|x|^{-4} (x \cdot w)^2
|w|^{q-2}\rangle,
$$
where $\chi=|x|^2|x|_\varepsilon^{-2}$.

\smallskip

\textbf{1.~}We repeat the proof of Lemma \ref{basic_ineq_lemma_nondiv}, where in the right-hand side of \eqref{be_+_nondiv}, \eqref{be_-_nondiv}
we now get an extra term $\langle -b_n\cdot w, \phi^{} \rangle$:
\begin{align}
&\mu \langle |w|^q \rangle + I_q^{} + c\bar{I}_{q,\chi}^{} + (q-2)(J_q^{} + c\bar{J}_{q,\chi}^{}) 
 - c\frac{d(d-1)}{q}H_{q,\chi}^{}  \notag \\
&+2c\frac{2d+1}{q}H_{q,\chi^2}-\frac{8c}{q}H_{q,\chi^3} =  \beta_1^{} + \langle -b_n\cdot w, \phi \rangle + \langle f, \phi^{} \rangle
\tag{${\rm BE}^{{\rm nd}}_{+,b}$},
\label{be_+b_nondiv}
\end{align}
\begin{align}
&\mu \langle |w|^q \rangle + I_q^{} + c\bar{I}_{q,\chi}^{} + (q-2)(J_q^{} + c\bar{J}_{q,\chi}^{})
 - c\frac{d(d-1)}{q}H_{q,\chi}^{} - c(d-2)G_{q,\chi^2} \notag \\
&+2c\frac{2d+1}{q}H_{q,\chi^2}-\frac{8c}{q}H_{q,\chi^3} -4c\varepsilon \langle |x|_\varepsilon^{-6}(x \cdot w)^2|w|^{q-2}\rangle = - \frac{1}{2}\beta_2^{}+ \langle -b_n\cdot w, \phi \rangle + \langle f, \phi^{} \rangle,
\tag{${\rm BE}^{{\rm nd}}_{-,b}$} 
\label{be_-b_nondiv}
\end{align}
where 
$$
\beta_1^{}:=-2c\langle |x|_\varepsilon^{-4} x \cdot w, x \cdot (x \cdot \nabla w)|w|^{q-2}\rangle, \qquad \beta_2^{}:=-2c(q-2)\langle |x|_\varepsilon^{-4} (x \cdot w)^2 x \cdot \nabla |w|,|w|^{q-3} \rangle.
$$

\textbf{2.~}By Lemma \ref{b_est_lem},
\begin{align*} 
& \langle - b_n\cdot w, \phi \rangle \\
&\leq
|c|(d+3)\frac{q\sqrt{\delta}}{2}G_{q,\chi^2}^{\frac{1}{2}}J_{q}^{\frac{1}{2}} + |c|\frac{q\sqrt{\delta}}{2}\bar{I}_{q,\chi}^{\frac{1}{2}}J_{q}^{\frac{1}{2}}+\biggl(\frac{ q^2\delta}{4} + (q-2)\frac{q\sqrt{\delta}}{2} \biggr)J_{q}
  + C_1\|w\|^q_q + C_2\|w\|_q^{q-2} \|f\|^2_q.
\end{align*}
Next, by an evident analogue of Lemma \ref{f_est_lem},
\begin{equation*}
\langle f, \phi \rangle \leq  \varepsilon_0  (I_{q}^{} +  J^{}_{q} + H_q+\|w\|_q^q) + C(\varepsilon_0) \|w\|_q^{q-2} \|f\|^2_q.
\end{equation*}
Again we choose $\varepsilon_0>0$ so small that in the estimates below we can ignore the terms multiplied by $\varepsilon_0$.

Applying the last two inequalities in  \eqref{be_+b_nondiv}, \eqref{be_-b_nondiv}, and using
 $\beta_1^{} \leq c\theta \bar{I}_{q,\chi}^{} + c\theta^{-1} G_{q,\chi^2}^{}$, $
|\beta_2^{}| \leq 2|c|(q-2)\bigl(\theta \bar{J}_{q,\chi}^{}  + 4^{-1}\theta^{-1} G_{q,\chi^2}^{}\bigr),
$
we obtain: 

If $c>0$, then:
\begin{align}
&\mu \langle |w|^q \rangle + I_q^{} + c(1-\theta)\bar{I}_{q,\chi}^{} + (q-2)(J_q^{} + c\bar{J}_{q,\chi}^{}) 
 - c\frac{d(d-1)}{q}H_{q,\chi}^{}  \notag \\
&+2c\frac{2d+1}{q}H_{q,\chi^2}-\frac{8c}{q}H_{q,\chi^3} - \frac{c}{\theta} G_{q,\chi^2} \label{c_b+_nondiv}\\
&\leq  c(d+3)\frac{q\sqrt{\delta}}{2}G_{q,\chi^2}^{\frac{1}{2}}J_{q}^{\frac{1}{2}} + c\frac{q\sqrt{\delta}}{2}\bar{I}_{q,\chi}^{\frac{1}{2}}J_{q}^{\frac{1}{2}}+\biggl(\frac{ q^2\delta}{4} + (q-2)\frac{q\sqrt{\delta}}{2} \biggr)J_{q}
  + C_1\|w\|^q_q + C_2\|w\|_q^{q-2} \|f\|^2_q. \notag
\end{align}
If $-1<c<0$, then (set $s:=|c|$):
\begin{align}
&\mu \langle |w|^q \rangle + I_q^{} -s\bar{I}_{q,\chi}^{} + (q-2)(J_q^{} -s(1+\theta)\bar{J}_{q,\chi}^{})
 +s\frac{d(d-1)}{q}H_{q,\chi}^{}  \notag \\
&-2s\frac{2d+1}{q}H_{q,\chi^2}+\frac{8s}{q}H_{q,\chi^3} +s\left(d+2-(q-2)\frac{1}{4\theta}\right) G_{q,\chi^2}- 4sG_{q,\chi^3} \label{c_b-_nondiv} \\
&\leq  s(d+3)\frac{q\sqrt{\delta}}{2}G_{q,\chi^2}^{\frac{1}{2}}J_{q}^{\frac{1}{2}} + s\frac{q\sqrt{\delta}}{2}\bar{I}_{q,\chi}^{\frac{1}{2}}J_{q}^{\frac{1}{2}}+\biggl(\frac{ q^2\delta}{4} + (q-2)\frac{q\sqrt{\delta}}{2} \biggr)J_{q}
  + C_1\|w\|^q_q + C_2\|w\|_q^{q-2} \|f\|^2_q. \notag
\end{align}

\textbf{3.~}We will use \eqref{c_b+_nondiv}, \eqref{c_b-_nondiv} to prove the following inequality
\begin{equation}
\label{req_est_40_nondiv}
\mu \langle |w|^q \rangle + \eta J_q^{} \leq C_1\|w\|_q^{q-2}+C_2\|w\|_q^{q-2} \|f\|^2_q, \qquad C_i=C_i(\varepsilon_0), \quad i=1,2,
\end{equation}
for some $\eta=\eta(q,d,\varepsilon_0)>0$.

\textit{\textbf{Case $c>0$.}}~In \eqref{c_b+_nondiv}, select $\theta= \frac{q}{d-2} >1$.
By the assumptions of the theorem, $1+c(1-\theta)>0$. Since $I_q+c(1-\theta)\bar{I}_{q,\chi} \geq (1+c(1-\theta))I_q$ and $H_{q,\chi^2} \geq G_{q,\chi^2}$, we have
\begin{align*}
&\mu\langle |w|^q\rangle +(1+c(1-\theta))I_q + (q-2)(J_q^{} + c\bar{J}_{q,\chi}^{}) -c\frac{d(d-1)}{q}H_{q,\chi} \\
&+ 2c\frac{2d+1}{q}H_{q,\chi^2}  - \frac{8c}{q} H_{q,\chi^3} -\frac{c}{\theta}H_{q,\chi^2} \\
&\leq  c(d+3)\frac{q\sqrt{\delta}}{2}G_{q,\chi^2}^{\frac{1}{2}}J_{q}^{\frac{1}{2}} + c\frac{q\sqrt{\delta}}{2}\bar{I}_{q,\chi}^{\frac{1}{2}}J_{q}^{\frac{1}{2}}+\biggl(\frac{ q^2\delta}{4} + (q-2)\frac{q\sqrt{\delta}}{2} \biggr)J_{q}
  + C_1\|w\|^q_q + C_2\|w\|_q^{q-2} \|f\|^2_q.
\end{align*}
Arguing as in the proof of Theorem \ref{thm3}, we arrive at
\begin{align*}
&\mu\langle |w|^q\rangle +(1+c(1-\theta))I_q + (q-2) J_q^{} +cM(1) H_q \\
&\leq  c(d+3)\frac{q\sqrt{\delta}}{2}G_{q,\chi^2}^{\frac{1}{2}}J_{q}^{\frac{1}{2}} + c\frac{q\sqrt{\delta}}{2}\bar{I}_{q,\chi}^{\frac{1}{2}}J_{q}^{\frac{1}{2}}+\biggl(\frac{ q^2\delta}{4} + (q-2)\frac{q\sqrt{\delta}}{2} \biggr)J_{q}
  + C_1\|w\|^q_q + C_2\|w\|_q^{q-2} \|f\|^2_q,
\end{align*}
where $M(1)=-2\frac{(d-2)^2}{q^2}$.
Using $I_q \geq \bar{I}_{q,\chi}$, $H_q \geq G_{q,\chi^2}$ in the RHS, and applying the standard quadratic estimates, we obtain ($\theta_2, \theta_3>0$),
\begin{align*}
&\mu\langle |w|^q\rangle +(1+c(1-\theta))I_q + (q-2)J_q^{} +cM(1) H_q \\
&\leq c(d+3)\frac{q\sqrt{\delta}}{4}(\theta_2J_{q} + \theta^{-1}_2H_q) + c\frac{q\sqrt{\delta}}{4}(\theta_3 I_{q}+\theta_3^{-1}J_{q})+\biggl(\frac{ q^2\delta}{4} + (q-2)\frac{q\sqrt{\delta}}{2} \biggr)J_{q} \\
& + C_1\|w\|^q_q + C_2\|w\|_q^{q-2} \|f\|^2_q.
\end{align*}
We select $\theta_2=\frac{q}{d-2}$, $\theta_3=1$. By the assumptions of the theorem, $1+c\big(1-\theta-\frac{q\sqrt{\delta}}{4}\big) \geq 0$, so that $\big(1+c(1-\theta -\frac{q\sqrt{\delta}}{4} )\big)I_q \geq \big(1+c(1-\theta -\frac{q\sqrt{\delta}}{4} )\big)J_q$. Thus, we arrive at
\begin{align*}
&\mu\langle |w|^q\rangle +\biggl[ q-1+c\biggl(1-\theta -\frac{q\sqrt{\delta}}{2}\biggr)  -  c(d+3)\frac{q\sqrt{\delta}}{4}\frac{q}{d-2} - \frac{ q^2\delta}{4} - (q-2)\frac{q\sqrt{\delta}}{2} \biggr]J_q \\
&+\biggl[cM(1) - c(d+3)\frac{q\sqrt{\delta}}{4}\frac{d-2}{q} \biggr] H_q  \leq C_1\|w\|^q_q + C_2\|w\|_q^{q-2} \|f\|^2_q.
\end{align*}
So, by $J_q \geq \frac{(d-2)^2}{q^2}H_q$,
\begin{align*}
&\mu\langle |w|^q\rangle +\eta J_q+\biggl[(-\eta + q-1)\frac{(d-2)^2}{q^2} - \LL_1^{\rm nd}(c,\delta)\biggr]H_q \leq C_1\|w\|^q_q + C_2\|w\|_q^{q-2} \|f\|^2_q,
\end{align*}
where
\begin{align*}
&\LL_1^{\rm nd}(c,\delta)=\biggl[-c\biggl(1-\theta -\frac{q\sqrt{\delta}}{2}\biggr)  +  c(d+3)\frac{q\sqrt{\delta}}{4}\frac{q}{d-2} + \frac{ q^2\delta}{4} + (q-2)\frac{q\sqrt{\delta}}{2} \biggr]\frac{(d-2)^2}{q^2} \\
&+c\biggl[-M(1) + (d+3)\frac{q\sqrt{\delta}}{4}\frac{d-2}{q} \biggr]. 
\end{align*}
By the assumptions of the theorem, $\big(-\eta+q-1\big)\frac{(d-2)^2}{q^2}-\LL_{1}(c,\delta) \geq 0$ for all $\eta>0$ sufficiently small. \eqref{req_est_40_nondiv} is proved.

\smallskip

\textit{\textbf{Case $-1<c<0$.}} Following the proof of Theorem \ref{thm3}, we obtain from \eqref{c_b-_nondiv}
\begin{align*}
&\mu \langle |w|^q\rangle + (1-s)I_q + (q-2)\bigl(1- s(1+\theta)\bigr)J_q
 + s\frac{d(d-1)}{q}G_{q,\chi} \\
& - 2s \frac{2d+1}{q}G_{q,\chi^2}+\frac{8s}{q}G_{q,\chi^3} +s\left(d+2-(q-2)\frac{1}{4\theta}\right) G_{q,\chi^2}  - 4sG_{q,\chi^3} \\
& \leq s(d+3)\frac{q\sqrt{\delta}}{2}G_{q,\chi^2}^{\frac{1}{2}}J_{q}^{\frac{1}{2}} + s\frac{q\sqrt{\delta}}{2}\bar{I}_{q,\chi}^{\frac{1}{2}}J_{q}^{\frac{1}{2}}+\biggl(\frac{ q^2\delta}{4} + (q-2)\frac{q\sqrt{\delta}}{2} \biggr)J_{q}
  + C_1\|w\|^q_q + C_2\|w\|_q^{q-2} \|f\|^2_q,
\end{align*}
In the RHS, we use $\frac{q^2}{(d-2)^2}J_q \geq G_{q,\chi^2}$, $I_q \geq \bar{I}_{q,\chi}$, $\frac{1}{2}(I_q+J_q) \geq I_{q}^{\frac{1}{2}}J_{q}^{\frac{1}{2}}$
\begin{align*}
&\mu \langle |w|^q\rangle + (1-s)I_q + (q-2)\bigl(1- s(1+\theta)\bigr)J_q
 + s\frac{d(d-1)}{q}G_{q,\chi} \\
& - 2s \frac{2d+1}{q}G_{q,\chi^2}+\frac{8s}{q}G_{q,\chi^3} +s\left(d+2-(q-2)\frac{1}{4\theta}\right) G_{q,\chi^2}  - 4sG_{q,\chi^3} \\
& \leq s(d+3)\frac{q\sqrt{\delta}}{2} \frac{q}{d-2} J_{q} + s\frac{q\sqrt{\delta}}{4}(I_q+J_q)+\biggl(\frac{ q^2\delta}{4} + (q-2)\frac{q\sqrt{\delta}}{2} \biggr)J_{q}
  + C_1\|w\|^q_q + C_2\|w\|_q^{q-2} \|f\|^2_q.
\end{align*}
Arguing as in the proof of Theorem \ref{thm3}, and selecting $\theta:=\frac{1}{4}\frac{q}{d-2}\frac{q-2}{q+d-3}$, we arrive at
\begin{align*}
&\mu \langle |w|^q\rangle + (1-s)I_q + (q-2)\bigl(1- s(1+\theta)\bigr)J_q \\
& \leq s(d+3)\frac{q\sqrt{\delta}}{2} \frac{q}{d-2} J_{q} + s\frac{q\sqrt{\delta}}{4}(I_q+J_q)+\biggl(\frac{ q^2\delta}{4} + (q-2)\frac{q\sqrt{\delta}}{2} \biggr)J_{q}
  + C_1\|w\|^q_q + C_2\|w\|_q^{q-2} \|f\|^2_q.
\end{align*}
By the assumptions of the theorem, $1-s\big(1+\frac{q\sqrt{\delta}}{4}\big) \geq 0$. Therefore, since $J_q \leq I_q$,
\begin{align*}
&\mu \langle |w|^q\rangle + \bigg[ q-1 -s- s(q-2)(1+\theta) - s(d+3)\frac{q\sqrt{\delta}}{2} \frac{q}{d-2}- s\frac{q\sqrt{\delta}}{2} - \frac{ q^2\delta}{4} - (q-2)\frac{q\sqrt{\delta}}{2}\biggr]J_q \\
& \leq C_1\|w\|^q_q + C_2\|w\|_q^{q-2} \|f\|^2_q.
\end{align*}
By the assumptions of the theorem, $q-1 -s - s(q-2)(1+\theta) - s(d+3)\frac{q\sqrt{\delta}}{2} \frac{q}{d-2}- s\frac{q\sqrt{\delta}}{2} - \frac{ q^2\delta}{4} - (q-2)\frac{q\sqrt{\delta}}{2}>0$.  Hence \eqref{req_est_40_nondiv} is proved.

\smallskip

\textbf{4.~}For $d \geq 4$, the Sobolev Embedding Theorem and Theorem \ref{conv_appendix}(\textit{ii})  now yield \eqref{reg_double_star}. We have proved (\textit{ii}).

\smallskip

\textit{Proof of (iii)}. Let $q=2$, $d \geq 3$. If $c<0$, then we can argue as in steps 1-3 obtaining
$$
\sup_{\varepsilon>0, n}I_2(u^{\varepsilon,n}) \leq K\|f\|_2, 
\quad \text{ and so } I_2(u)  \leq K\|f\|_2 \quad \Rightarrow u \in W^{2,2}.$$ 


Now, let $c>0$. We have
$
b+\nabla a \in \mathbf{F}_{\delta_2}(A)$, $\sqrt{\delta_2}:=\sqrt{\delta} + 2\frac{d-1}{d-2}\frac{c}{1+c}$
(cf.\;beginning of the proof). By the assumptions of the theorem, $\delta_2<1$,
and so $\Lambda_2(a,\nabla a)$ is well defined. By the Miyadera Perturbation Theorem and Theorem \ref{thm1}, $D(\Lambda_2(a,\nabla a + b))=D(A_2) \subset W^{2,2}$, and
$u:=(\mu+\Lambda_2(a,\nabla a + b))^{-1}f$, $\mu>0$, $f \in L^2$, belongs to $W^{2,2}$. Multiplying $(\mu  + A_2 + (\nabla a + b)\cdot\nabla) u^{}= f $ by $\phi_{m}:=- E_m\nabla \cdot w$, $E_m=(1-m^{-1}\Delta)^{-1}$, $m \geq 1$ and integrating by parts we have (omitting the summation sign in the repeated indices):
\begin{align*}
\tag{$\star$}
\mu \langle |w|^2 \rangle + \langle a \cdot \nabla w_r, E_m \nabla w_r\rangle + \langle-(\nabla_r a) \cdot w, E_m\nabla w_r)\rangle  +\langle \nabla a\cdot w, \phi_{m}^{} \rangle  = \langle -b\cdot w,\phi_m\rangle + \langle f, \phi_{m}^{} \rangle,
\end{align*}

Now we pass in ($\star$) to the limit $m \rightarrow \infty$. We obtain an analogue of \eqref{be_+b_nondiv} for $q=2$: 
\begin{align*}
\mu \langle |w|^2 \rangle + I_2^{} + c\bar{I}_{2}^{} 
 - \frac{c}{2}(d-2)(d-3)H_{2} = \beta  + \langle -b\cdot w,\phi\rangle +  \langle f, -\nabla \cdot w \rangle
\end{align*}
where 
$I_2:= \langle \nabla w_r,\nabla w_r \rangle$, $\bar{I}_2:= \langle  \bigl( x \cdot \nabla w \bigr)^2 |x|^{-2} \rangle$, $H_2:=\langle |x|^{-2}|w|^2 \rangle$, $\beta:=-2c\langle |x|^{-4} x \cdot w, x \cdot (x \cdot \nabla w)\rangle$. 

Using the inequalities $\beta \leq c\bar{I}_2 +cH_2$, $\frac{(d-2)^2}{4}H_2\leq I_2$, we have
\[
\mu \langle |w|^2 \rangle +\bigg[1-\frac{4}{(d-2)^2}\bigg(1+\frac{(d-2)(d-3)}{2}\bigg)c\bigg]I_2 \leq \langle -b\cdot w,\phi\rangle + \langle f, \phi^{} \rangle,
\]
so by 
\begin{align*}
|\langle b \cdot w,\phi\rangle| & \leq  c(d+3)\frac{\sqrt{\delta}}{2}\left(\frac{2}{d-2}J_{2} + \frac{d-2}{2}H_q\right) + c\frac{\sqrt{\delta}}{2}( I_2+J_{2})+\delta J_{2} \qquad (J_2 \leq I_2) \\
& \leq \biggl(2c\sqrt{\delta}\frac{d+3}{d-2} + c\sqrt{\delta}+\delta \biggr)I_2.
\end{align*}
Therefore, 
\[
\mu \langle |w|^2 \rangle +\bigg[1-\frac{4}{(d-2)^2}\bigg(1+\frac{(d-2)(d-3)}{2}\bigg)c - c\sqrt{\delta}\biggl(2\frac{d+3}{d-2}+1 \biggr)-\delta\bigg]I_2 \leq \langle f, \phi^{} \rangle,
\]
where the coefficient of $I_2$ is positive by the assumptions of the theorem.
The proof of (\textit{iii}) follows.

\smallskip

The proof of Theorem \ref{thm4} is completed. \qed

\appendix

\section{}
\label{appendix_existence}

\setcounter{theorem}{0}
\renewcommand{\thetheorem}{\Alph{section}.\arabic{theorem}}

The following theorem is essentially a special case of \cite[Theorem 3.2]{KiS}.

\begin{theorem}
\label{thm:A1}
Let $d \geq $3. Let $a=I+c|x|^{-2}x \otimes x$, $
a^\varepsilon:=I+c|x|^{-2}_\varepsilon x \otimes x,
$ $|x|_\varepsilon^2:=|x|^2+\varepsilon$, $\varepsilon>0$. 
Set $a_n:=a^{\varepsilon_n}$, $\varepsilon_n \downarrow 0$. Let $b \in \mathbf{F}_{\delta_1}(A)$, $\delta_1>0$. 
Let $\mathbf 1_n$ denote the indicator of  $\{x \in \mathbb R^d \mid  \; |x| \leq n,  |b(x)| \leq n \}$, and set $b_n := \gamma_{\epsilon_n} \ast \mathbf 1_n b$, where $\gamma_{\epsilon}$ is the K.\,Friedrichs mollifier, $\epsilon_n \downarrow 0$. 

{\rm(\textit{i})} 
Let $\delta_1 < 4$.  Then $-\nabla \cdot a \cdot \nabla + b \cdot \nabla$ has an operator realization  $\Lambda_r(a,b)$ in $L^r$, $r > \frac{2}{2-\sqrt{\delta_1}}$, as the (minus) generator of a positivity preserving, $L^\infty$ contraction, quasi contraction $C_0$ semigroup on $L^r$, 
\[
e^{-t \Lambda_r(a,b)} := s\mbox{-}L^r\mbox{-}\lim_{n\rightarrow \infty} e^{-t \Lambda_r(a,b_n)}.
\]

{\rm(\textit{ii})} Set $\sqrt{\delta_0}:=2 \frac{d-1}{d-2}\frac{|c|}{1+c}$,
$$
\sqrt{\delta_2}:=\left\{
\begin{array}{ll}
\sqrt{\delta_1}+\sqrt{\delta_0}, & c>0, \\
\sqrt{\delta_1},  & -1<c<0.
\end{array}
\right.
$$
Assume that $\delta_2 < 4$.  Then $-\nabla \cdot a \cdot \nabla + (\nabla a + b) \cdot \nabla$ has an operator realization  $\Lambda_r(a,\nabla a+b)$ in $L^r$, $r > \frac{2}{2-\sqrt{\delta_2}}$, as the (minus) generator of a positivity preserving, $L^\infty$ contraction, quasi contraction $C_0$ semigroup on $L^r$, 
\[
e^{-t \Lambda_r(a,\nabla a + b)} := s\mbox{-}L^r\mbox{-}\lim_{n\rightarrow \infty} e^{-t \Lambda_r(a,\nabla a_n + b_n)}.
\]
\end{theorem}

\begin{proof} 
Below we use that $b \in \mathbf{F}_{\delta_1}(A)$ is equivalent to: 
$$
\langle b \cdot a^{-1} \cdot b, |\varphi|^2 \rangle \leq \delta_1 \langle \nabla \varphi \cdot a \cdot \nabla \bar{\varphi}\rangle  + \lambda\delta_1  \langle |\varphi|^2 \rangle, \quad \varphi \in W^{1,2}.
$$
 Since our assumption on $\delta_1$ involves a strict inequality, we can select $\epsilon_n \downarrow 0$ so that   $b_n \in \mathbf{F}_{\delta_1}(A)$, $n \geq 1$. 

It is not difficult to see that $\nabla a_n \in \mathbf{F}_{\delta_0}(A)$, $n \geq 1$.

We will conduct the proof of (\textit{ii}), for $-1<c<0$, $r \geq 2$. The proof of (\textit{i}), and of the remaining cases in (\textit{ii}), is similar.

We will need the following elementary estimate. Since $\nabla a_n=c(d-1)|x|_{\varepsilon_n}^{-2} x + 2c\varepsilon_n |x|_{\varepsilon_n}^{-4}x$ , we have, for all $v \in L_+^\infty\cap W^{1,2}$ (write $\varepsilon = \varepsilon_n$),
$$
\langle \nabla a_n \cdot \nabla v,v^{r-1}\rangle=  c(d-1)\langle |x|_\varepsilon^{-2}x \cdot \nabla v,v^{r-1}\rangle  + 2c\varepsilon\langle |x|_\varepsilon^{-4}x \cdot \nabla v,v^{r-1}\rangle,
$$
$$
\frac{r}{2}\langle |x|_\varepsilon^{-2} x \cdot \nabla v,v^{r-1}\rangle=\langle |x|_\varepsilon^{-2} x \cdot \nabla v^{\frac{r}{2}},v^{\frac{r}{2}}\rangle=-\frac{d-2}{2} \langle |x|_\varepsilon^{-2}v^r\rangle- \varepsilon \langle |x|_\varepsilon^{-4}v^r\rangle,
$$
$$\frac{r}{2}\varepsilon \langle |x|_\varepsilon^{-4}x\cdot\nabla v,v^{r-1} \rangle=\varepsilon\langle |x|_\varepsilon^{-4}x\cdot\nabla v^{\frac{r}{2}},v^{\frac{r}{2}} \rangle=-\frac{d-4}{2}\varepsilon\langle |x|_\varepsilon^{-4}v^r\rangle  -2\varepsilon^2 \langle |x|_\varepsilon^{-6}v^r\rangle.
$$
Thus, since $c<0$ and $v \geq 0$,  
\begin{equation}
\tag{$\bullet$}
\label{pos}
\langle \nabla a_n \cdot \nabla v,v^{r-1}\rangle \geq 0 \quad \text{for all $d\geq 3$.}
\end{equation}

\smallskip

1)~By the standard theory,
$-\nabla \cdot a \cdot \nabla + (\nabla a_n + b_n) \cdot \nabla$ has an operator realization  $\Lambda_r(a,\nabla a_n+b_n)$ in $L^r$ as the (minus) generator of a positivity preserving, $L^\infty$ contraction, quasi contraction $C_0$ semigroup on $L^r$. Moreover, $u \equiv u_n:=e^{-t\Lambda_r(a,\nabla a_n + b_n)}f=e^{-t\Lambda_{2r}(a,\nabla a_n + b_n)}f$, $f \in L^\infty \cap L^1_+,$ satisfies $u \in D(A) \cap L^\infty_+$. See e.g.\;\cite[sect.\,4]{LS} or \cite[sect.\,3]{KiS}.

By the Miyadera Perturbation Theorem, $\Lambda_r(a,\nabla a_n+ b_n)u=\Lambda_2(a,\nabla a_n+b_n)u =A u +(\nabla a_n+b_n)\cdot \nabla u$. By \cite[Theorem 2.1]{LS} (see also \cite[Theorem G.1]{KiS}), $u, u^\frac{r}{2}, u^r \in D(A^\frac{1}{2}),$  and since $D(A^\frac{1}{2})=W^{1,2}$, $\nabla u^\frac{r}{2} = \frac{r}{2} u^{\frac{r}{2}-1}  \nabla u$.
Thus, we have
\[
-\big \langle \frac{d}{dt} u, u^{r-1} \big \rangle = \big \langle A u, u^{r-1} \big \rangle + \big \langle (\nabla a_n + b_n) \cdot \nabla u, u^{r-1} \big \rangle, 
\]
where $\big \langle A u, u^{r-1} \big \rangle := \langle \nabla u \cdot a \cdot\nabla u^{r-1}\rangle $,
\begin{align*}
-\frac{d}{dt} \| u \|^r_r  = \frac{4}{r^\prime} \big \langle \nabla u^\frac{r}{2} \cdot a \cdot \nabla u^\frac{r}{2} \big \rangle + \frac{2}{r}\big \langle (\nabla a_n + b_n) \cdot \nabla u^{\frac{r}{2}}, u^{\frac{r}{2}} \big \rangle.
\end{align*}
By \eqref{pos},
\begin{equation}
\label{ineq9}
\tag{$\circ$}
-\frac{d}{dt} \| u \|^r_r  \geq \frac{4}{r^\prime} \big \langle \nabla u^\frac{r}{2} \cdot a \cdot \nabla u^\frac{r}{2}\big \rangle + \frac{2}{r}\big \langle b_n \cdot \nabla u^{\frac{r}{2}},u^{\frac{r}{2}} \big \rangle.
\end{equation}
Using the conditions $r \geq \frac{2}{2-\sqrt{\delta_1}}$, $b_n \in \mathbf{F}_{\delta_1}(A)$ and completing the quadratic estimate
\begin{align*}
2 |\big \langle u^\frac{r}{2} b_n \cdot \nabla u^\frac{r}{2} \big \rangle | & \leq \alpha \|b_a u^\frac{r}{2} \|_2^2 + \alpha^{-1} \| A^\frac{1}{2} u^\frac{r}{2} \|^2_2 \\
& \leq (\alpha \delta_1 + \alpha^{-1} )\big\langle (\nabla u^\frac{r}{2})\cdot a \cdot (\nabla u^\frac{r}{2}) \big\rangle  +\alpha \lambda\delta_1 \|u\|_r^r,
 \end{align*}
we obtain (choosing $\alpha = \frac{r^\prime}{2}$ and taking into account that $\sqrt{\delta_1} \leq \frac{2}{r^\prime}$ for $r \geq \frac{2}{2-\sqrt{\delta_1}} $)
$$
2 |\big \langle u^\frac{r}{2} b_n \cdot \nabla u^\frac{r}{2} \big \rangle | \leq \frac{4}{r^\prime} \big\langle (\nabla u^\frac{r}{2})\cdot a \cdot (\nabla u^\frac{r}{2}) \big\rangle + \frac{\lambda\delta_1r^\prime}{2} \|u\|_r^r.
$$
The previous estimate applied in \eqref{ineq9} yields
\begin{equation*}
\|e^{-t\Lambda_r(a,\nabla a_n + b_n)}\|_{r \rightarrow r} \leq e^{\omega_rt}, \quad \omega_r=\frac{\lambda\delta_1}{2(r-1)}, \quad r \geq \frac{2}{2-\sqrt{\delta_1}}.
\end{equation*}
In particular, $\|u_n\|_\infty \leq \|f\|_\infty$.

2)~Fix $\kappa > 2.$ Define
\begin{equation}
\label{zeta1}
\eta(t):=\left\{
\begin{array}{ll}
0, & \text{ if } t< \kappa, \\
\big( \frac{t}{\kappa} - 1 \big)^\kappa, & \text{ if }\kappa \leq t \leq 2 \kappa, \quad \quad \text{ and } \zeta(x) = \eta(\frac{|x|}{R}), \;\; R > 0.\\
1, & \text{ if } 2 \kappa < t,
\end{array}
\right.
\end{equation}
Note that $ |\nabla \zeta | \leq R^{-1} \mathbf{1}_{\nabla \zeta} \zeta^{1-\frac{1}{\kappa}}.$
Let $u \equiv u_n$ be as above, $v := \zeta u_n \geq 0.$  
Clearly,
\[
\left\langle \zeta \bigg(\frac{d}{d t} + A + (\nabla a_n + b_n \bigg) \cdot\nabla ) u_n, v^{r-1} \right\rangle =0.
\]
Since $v, v^\frac{r}{2}, \zeta v^{r-1} \in W^{1,2}$, it is easy to justify the following equation and equality ($[F, G]_- := F G - G F$):
\begin{align*}
\bigg\langle \bigg(\frac{d}{d t} + A + (\nabla a_n + b_n) \cdot \nabla\bigg) v, v^{r-1}\bigg \rangle & = \big\langle [A, \zeta]_-u_n + u_n (\nabla a_n+b_n) \cdot \nabla \zeta, v^{r-1} \big\rangle 
\tag{$\star$} \label{star2} \\
\langle [A, \zeta]_-u_n, v^{r-1} \rangle & =\frac{2}{r^\prime} \big\langle \nabla v^\frac{r}{2} \cdot a  \cdot \nabla \zeta, u_nv^{\frac{r}{2}-1}  \big\rangle - \langle \nabla \zeta \cdot a  \cdot \nabla u_n, v^{r-1} \rangle \\
& = \frac{2}{r^\prime} \big\langle \nabla v^\frac{r}{2} \cdot \frac{a}{\zeta} \cdot \nabla \zeta, v^\frac{r}{2} \big\rangle - \frac{2}{r}\big\langle \nabla \zeta \cdot \frac{a}{\zeta} \cdot \nabla v^\frac{r}{2}, v^\frac{r}{2} \big\rangle + \big\langle \nabla \zeta \cdot \frac{a}{\zeta^2} \cdot \nabla \zeta, v^r \big\rangle.
\end{align*}
In the LHS of \eqref{star2}, we argue as in 1) in order to get rid of the term $\langle \nabla a_n \cdot \nabla v, v^{r-1} \rangle$  and to estimate the term $\langle b_n \cdot \nabla v, v^{r-1} \rangle$. By Lemma \ref{nabla_a_lem}, we have $\nabla a_n + b_n \in \mathbf{F}_{\delta_3}(A)$, $\sqrt{\delta_3}=2\frac{d-1}{d-2}\frac{|c|}{1+c}+\sqrt{\delta_1}$. Therefore, by the quadratic estimates 
\begin{align*}
\big\langle u_n (\nabla a_n+b_n) \cdot \nabla \zeta, v^{r-1} \big\rangle & = \big\langle (\nabla a_n+b_n) \cdot \frac{\nabla \zeta}{\zeta}, v^r \big\rangle \\
& \leq \frac{\epsilon \sqrt{\delta_3}}{r} \big \langle \nabla v^\frac{r}{2}\cdot a \cdot  \nabla v^\frac{r}{2} \big \rangle + \frac{r \sqrt{\delta_3}}{4 \epsilon}\big\langle \nabla \zeta \cdot \frac{a}{\zeta^2}\cdot \nabla \zeta, v^r \big\rangle + \frac{\epsilon \lambda\delta_1}{r \sqrt{\delta_3}} \|v\|_r^r \;\;( \epsilon > 0 ),\\
\frac{2}{r}(r-2)\big\langle \nabla \zeta \cdot \frac{a}{\zeta} \cdot \nabla v^\frac{r}{2}, v^\frac{r}{2} \big\rangle &\leq\frac{\epsilon \sqrt{\delta_3}}{r} \big \langle \nabla v^\frac{r}{2}\cdot a \cdot  \nabla v^\frac{r}{2} \big \rangle + \frac{(r-2)^2}{r \epsilon \sqrt{\delta_3}}\big\langle \nabla \zeta \cdot \frac{a}{\zeta^2}\cdot \nabla \zeta, v^r \big\rangle,
\end{align*}
we get from \eqref{star2}
\[
\frac{d}{d t} \|v\|_r^r + 2\bigg( \frac{2}{r^\prime} - \sqrt{\delta_1} - \epsilon \sqrt{\delta_3} \bigg) \big \langle \nabla u^\frac{r}{2}\cdot a \cdot \nabla u^\frac{r}{2} \big \rangle \leq \bigg(\frac{(r-2)^2}{\epsilon \sqrt{\delta_3}} + \frac{r^2 \sqrt{\delta_3}}{4 \epsilon} +r \bigg)\big\langle \nabla \zeta \cdot \frac{a}{\zeta^2}\cdot \nabla \zeta, v^r \big\rangle + \frac{r + \epsilon}{\sqrt{\delta_3}} \lambda\delta_1 \|v\|_r^r.
\]
Recalling that $\frac{2}{r^\prime} > \sqrt{\delta_1},$ we can find $\epsilon$ so small that $\frac{2}{r^\prime} - \sqrt{\delta_1} - \epsilon \sqrt{\delta_3} \geq 0.$ Thus
\[
\frac{d}{d t} \|v\|_r^r \leq \bigg(\frac{4 (r-2)^2 + r^2 \delta_3}{4 \epsilon \sqrt{\delta_3}} +r \bigg)\big\langle \nabla \zeta \cdot \frac{a}{\zeta^2}\cdot \nabla \zeta, v^r \big\rangle + \frac{r + \epsilon}{\sqrt{\delta_3}} \lambda\delta_1 \|v\|_r^r. \tag{$\star\star$}
\]
Next, $\big\langle \nabla \zeta \cdot \frac{a}{\zeta^2}\cdot \nabla \zeta, v^r \big\rangle \leq \xi R^{-2} \| \mathbf 1_{\nabla \zeta} \zeta^{-2 \theta} v^r \|_1,$ where $\theta = \kappa^{-1}$ and $\mathbf 1_{\nabla \zeta}$ denotes the indicator of the support of $|\nabla \zeta|.$ Since $\|u_n\|_\infty \leq \|f\|_\infty, \; \|\mathbf 1_{\nabla \zeta}\|_\frac{r}{2\theta} \leq c(d,\theta) R^\frac{2\theta d}{r}$ and
\[
\| \mathbf 1_{\nabla \zeta} \zeta^{-2 \theta} v^r \|_1 \leq \|\mathbf 1_{\nabla \zeta}u_n^{2 \theta} \|_\frac{r}{2\theta} \|v\|_r^{r-2\theta}\leq \|\mathbf 1_{\nabla \zeta} \|_\frac{r}{2 \theta} \|u_n\|_\infty^{2\theta} \|v\|_r^{r-2\theta},
\]
we obtain, using the Young inequality, the crucial estimate
\[
\big\langle \nabla \zeta \cdot \frac{a}{\zeta^2}\cdot \nabla \zeta, v^r \big\rangle \leq \frac{2 \theta}{r} [\xi c(d)]^\frac{r}{2 \theta} R^{d-\frac{r}{\theta}} \|f\|_\infty^r + \frac{r-2\theta}{r} \|v\|_r^r.
\]
Fix $\theta$ by $0 < \theta < \frac{r}{d+2r}.$ Now, from $(\star\star)$ we obtain the inequality
\[
\frac{d}{d t} \|v\|_r^r \leq N(r, d, \delta_1, c) \|v\|_r^r + M(r, d, \delta_1, c) R^{-\gamma} \|f\|_\infty^r, \;\; \gamma = \frac{r}{\theta}-d > 0, \tag{$\star\star\star$}
\]
from which we conclude that, for given $T, f \in L^2 \cap L^\infty_+, \;\varepsilon > 0,$ there exists $R$ such that $$\sup_{t \in [0,T], n \geq 1} \|\zeta u_n(t)\|_r \leq \varepsilon.$$

3) For $\kappa$, $R$ determined above set
\begin{equation}
\label{zeta2}
\eta(t):=\left\{
\begin{array}{ll}
1, & \text{ if } t< 2 \kappa, \\
\big( 1 - \frac{1}{\kappa} (t - 2 \kappa) \big)^\kappa, & \text{ if } 2 \kappa \leq t \leq 3 \kappa, \quad \quad \text{ and } \zeta(x) = \eta(\frac{|x|}{R}), \;\; R > 0.\\
0, & \text{ if } 3 \kappa < t,
\end{array}
\right.
\end{equation}
Note that $ |\nabla \zeta | \leq R^{-1} \mathbf{1}_{\nabla \zeta} \zeta^{1-\frac{1}{k}}.$
Set $g := u_n - u_m$ and $v := \zeta g.$ Then, subtracting the equations for $u_n$ and $u_m$, multiplying the difference by $\zeta v|v|^{r-2}$ and integrating, we have
\[
\bigg \langle \zeta \bigg (\frac{d}{d t} + A +(\nabla a_n+b_n) \cdot \nabla \bigg ) g + \zeta (\nabla a_n - \nabla a_m + b_n - b_m) \cdot \nabla u_m, v |v|^{r-2} \bigg \rangle = 0,
 \]
\begin{align*}
& \bigg \langle \bigg (\frac{d}{d t} + A + (\nabla a_n+b_n) \cdot \nabla \bigg)v, v |v|^{r-2} \bigg \rangle \\
& = \big \langle [A, \zeta]_- g + v (\nabla a_n+b_n) \cdot \frac{\nabla \zeta}{\zeta}, v |v|^{r-2} \big \rangle + \langle \zeta (\nabla a_m - \nabla a_n + b_m - b_n) \cdot \nabla u_m, v |v|^{r-2} \rangle,
\end{align*}
where
 \[
\langle [A, \zeta]_- g , v |v|^{r-2} \rangle = \frac{2(r-2)}{r} \big\langle \nabla |v|^\frac{r}{2} \cdot \frac{a}{\zeta} \cdot \nabla \zeta, |v|^\frac{r}{2} \big\rangle + \big\langle \nabla \zeta \cdot \frac{a}{\zeta^2} \cdot \nabla \zeta, |v|^r \big\rangle,
\]
so arguing as in 2) we have
\begin{align*}
\frac{d}{d t} \|v\|_r^r & \leq \bigg(\frac{4 (r-2)^2 + r^2 \delta_3}{4 \epsilon \sqrt{\delta_3}} +r \bigg)\big\langle \nabla \zeta \cdot \frac{a}{\zeta^2}\cdot \nabla \zeta, v^r \big\rangle + \frac{r + \epsilon}{\sqrt{\delta_3}} \lambda\delta_1 \|v\|_r^r\\
& + r \langle \zeta (\tilde{b}_m-\tilde{b}_n) \cdot \nabla u_m, v |v|^{r-2} \rangle \text{ with the same } \epsilon \text{ as in } (\star \star), 
\end{align*}
where $\tilde{b}_m:=\nabla a_m + b_m \in \mathbf{F}_{\delta_3}(A)$,
\begin{align*}
\langle \zeta (\tilde{b}_m-\tilde{b}_n) \cdot \nabla u_m, v |v|^{r-2} \rangle & \leq \langle \zeta (\tilde{b}_m-\tilde{b}_n) \cdot a^{-1} \cdot (\tilde{b}_m-\tilde{b}_n) \rangle^\frac{1}{2} \langle \nabla u_m \cdot a \zeta \cdot \nabla u_m \rangle^\frac{1}{2} (2\|f\|_\infty)^{r-1}.
\end{align*}

In order to estimate $\int_0^T \langle \nabla u_m(t) \cdot a \zeta \cdot \nabla u_m (t) \rangle d t$ note that $\langle \frac{d}{d t} u_m + A u_m + \tilde{b}_m \cdot \nabla u_m, \zeta u_m \rangle = 0,$ or
\[
\frac{1}{2} \frac{d}{d t} \langle \zeta u_m^2 \rangle + \langle \nabla u_m \cdot a \zeta \cdot \nabla u_m \rangle + \langle \nabla u_m \cdot a u_m \cdot \nabla \zeta \rangle + \langle \tilde{b}_m \cdot \nabla u_m, \zeta u_m \rangle = 0,
\]
and so
\[
\frac{d}{d t} \langle \zeta u_m^2 \rangle + \langle \nabla u_m \cdot a \zeta \cdot \nabla u_m \rangle \leq 2 \big(\big \langle \nabla \zeta \cdot \frac{a}{\zeta} \cdot \nabla \zeta \big \rangle + \langle \zeta \tilde{b}_m \cdot a^{-1} \cdot \tilde{b}_m \rangle \big) \|f\|^2_\infty,
\]
\begin{align*}
\int_0^T \langle \nabla u_m(t) \cdot a \zeta \cdot \nabla u_m (t) \rangle d t & \leq \| f \|_2^2  + 2 T \big( \big\langle \nabla \zeta \cdot \frac{a}{\zeta} \cdot \nabla \zeta \big \rangle + \langle \zeta \tilde{b}_m \cdot a^{-1} \cdot \tilde{b}_m \rangle \big) \|f\|^2_\infty \\
& \equiv \| f \|_2^2  + T L(R) \|f\|_\infty^2.
\end{align*}
Now it should be clear that the above is sufficient for concluding that the following inequality analogous to $(\star\star\star)$  holds for all $n, m$ and $t \in [0,T],$
\begin{align*}
e^{-N t} \|\zeta g\|_r^r & \leq t M R^{-\gamma} \|f\|_\infty^r \\ 
& + t (2 \|f\|_\infty)^{r-1} \big(\|f\|_2^2 + t L(R) \|f\|_\infty^2 \big)^\frac{1}{2} \big\langle \zeta (\tilde{b}_n-\tilde{b}_m)\cdot a^{-1}\cdot(\tilde{b}_n-\tilde{b}_m) \rangle^\frac{1}{2}. 
\end{align*}

4)\;Combining 2) and 3) and using that $b_n \rightarrow b$, $\nabla a_n \rightarrow \nabla a$ strongly in $L^{2}_{\loc}$, we obtain that for each $0 <T < \infty$, $r > \frac{2}{2-\sqrt{\delta_1}}$ we can find $R < \infty$ and $M < \infty$ such that
\[
\sup_{t \in [0, T], \; n, m \geq 1} \|(1-\mathbf{1}_{B(o, 2 k R)})(u_n(t) - u_m(t))\|_r < \epsilon, \quad
 \sup_{t \in [0, T], n, m \geq M}\|\mathbf{1}_{B(o, 2 k R)}(u_n(t) - u_m(t))\|_r < \epsilon,
\]
which yields the required. The proof of Theorem \ref{thm:A1} for $-1<c<0$, $r \geq 2$ is completed.

\smallskip

If $c>0$, then the estimate \eqref{pos} is clearly not valid, and we have to get rid of the term $\langle \nabla a_n \cdot \nabla v, v^{r-1} \rangle$ using the quadratic estimates only (replacing above $\delta_1$ by  $\delta_2>\delta_1$).
\end{proof}

\begin{theorem}
\label{conv_appendix}
Let $d \geq 3$.
Let $a=I+c|x|^{-2}x \otimes x$, $
a^\varepsilon:=I+c|x|^{-2}_\varepsilon x \otimes x,
$ $|x|_\varepsilon^2:=|x|^2+\varepsilon$, $\varepsilon>0$. 
Set $a_n:=a^{\varepsilon_n}$, $\varepsilon_n \downarrow 0$. Let $b \in \mathbf{F}_\delta$, let $b_n$'s be as in Theorem \ref{thm:A1}.

{\rm (\textit{i})} Assume that $q \geq d-2$, $d \geq 4$, $c$, $\delta$ satisfy the assumptions of Theorem \ref{thm2}(\textit{ii}), or $q=2$, $d \geq 3$, $c$, $\delta$ satisfy the assumptions of Theorem \ref{thm2}(\textit{iii}).
Then $(\mu+\Lambda_q(a,b))^{-1}$, $(\mu+\Lambda_q(a_n,b_n))^{-1}$, $\mu>\omega_q$, are well defined, and 
\[
(\mu+\Lambda_q(a,b))^{-1} = s\mbox{-}L^q\mbox{-}\lim_n (\mu+\Lambda_q(a_n,b_n))^{-1}.
\]
Here $\Lambda_q(a_n, b_n)=-\nabla \cdot a_n \cdot \nabla + b_n \cdot \nabla$, $D(\Lambda_q(a_n, b_n))=W^{2,q}$.

{\rm (\textit{ii})} Assume that $q \geq d-2$, $d \geq 4$, $c$, $\delta$ satisfy the assumptions of Theorem \ref{thm4}(\textit{ii}), or $q=2$, $d \geq 3$,  $c<0$, $\delta$ satisfy the assumptions of Theorem \ref{thm4}(\textit{iii}). Then 
$(\mu+\Lambda_q(a,\nabla a + b))^{-1}$, $(\mu+\Lambda_q(a_n,\nabla a_n+b_n))^{-1}$, $\mu>\omega_q$, are well defined, and 
\[
(\mu+\Lambda_q(a,\nabla a+ b))^{-1} = s\mbox{-}L^q\mbox{-}\lim_n (\mu+\Lambda_q(a_n,\nabla a_n + b_n))^{-1}.
\]
\end{theorem}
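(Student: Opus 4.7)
The strategy is to combine the uniform a priori bounds from the proofs of Theorems \ref{thm2} and \ref{thm4} with a weak compactness plus identification argument. The approximating operators $\Lambda_q(a_n,b_n) = -\nabla\cdot a_n\cdot\nabla + b_n\cdot\nabla$ (and in (\textit{ii}) also $\Lambda_q(a_n,\nabla a_n + b_n)$) have $W^{2,q}$-domain since $a_n$, $b_n$, $\nabla a_n$ are smooth, and for $f\in C_c^1$ the approximants $u_n := (\mu + \Lambda_q(a_n,b_n))^{-1} f$ lie in $W^{3,q}$. The choice of the mollification indices $\epsilon_n$ guarantees $b_n \in \mathbf{F}_\delta$ with the same $\delta$ for all $n$ (equivalently, $b_n \in \mathbf{F}_{\delta_1}(A^{\varepsilon_n})$ uniformly in $n$), and Lemma \ref{approx_prop} gives $\nabla a_n \in \mathbf{F}_{\delta_0}(A^{\varepsilon_n})$ uniformly. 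Consequently, every computation performed in the proofs of Theorems \ref{thm2} and \ref{thm4} applies to $u_n$ with constants independent of $n$, producing
\begin{equation*}
\sup_n\bigl(\|u_n\|_q + \|\nabla u_n\|_q + \|\nabla u_n\|_{qd/(d-2)}\bigr) \leq C(d,q,c,\delta,\mu)\|f\|_q,\quad \mu>\omega_q,
\end{equation*}
and, in the $q=2$ cases (parts (\textit{iii}) of Theorems \ref{thm2} and \ref{thm4}), additionally $\sup_n \|u_n\|_{W^{2,2}} \leq C\|f\|_2$.

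By reflexivity, one extracts a subsequence with $u_n \rightharpoonup u$ weakly in $L^q$, $\nabla u_n \rightharpoonup \nabla u$ weakly in $L^q \cap L^{qd/(d-2)}$, and $u_n \to u$ strongly in $L^q_\loc$ via Rellich--Kondrachov. Testing $(\mu + \Lambda_q(a_n,b_n))u_n = f$ against $\varphi \in C_c^\infty$,
\begin{equation*}
\mu\langle u_n,\varphi\rangle + \langle a_n\nabla u_n, \nabla\varphi\rangle + \langle b_n\cdot\nabla u_n,\varphi\rangle = \langle f,\varphi\rangle,
\end{equation*}
with an additional $\langle \nabla a_n\cdot\nabla u_n,\varphi\rangle$ in the non-divergence case. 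Since $a_n\to a$ boundedly and pointwise a.e., the second term passes to the limit immediately. For the first-order piece I would split $b_n\cdot\nabla u_n = (b_n - b)\cdot\nabla u_n + b\cdot\nabla u_n$: the first summand is controlled by $\|b_n - b\|_{L^2(K)}\|\nabla u_n\|_{L^2(K)}\|\varphi\|_\infty$ with $K=\mathrm{supp}\,\varphi$, and vanishes because $b_n\to b$ strongly in $L^2_\loc$ while $\|\nabla u_n\|_{L^2(K)}$ is uniformly bounded; the second converges since $b\varphi\in L^2$ and $\nabla u_n \rightharpoonup \nabla u$ weakly in $L^2_\loc$. The $\nabla a_n$-term is treated identically because $\nabla a \in L^2_\loc$. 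Thus $u$ solves the distributional equation corresponding to $(\mu+\Lambda_q(a,b))u=f$ (resp.\,its non-divergence analogue), and the uniqueness built into the abstract construction of $\Lambda_q(a,b)$ and $\Lambda_q(a,\nabla a+b)$ (Theorems \ref{thm2}(\textit{i}), \ref{thm4}(\textit{i}), \ref{thm:A1}) forces $u = (\mu+\Lambda_q(a,b))^{-1}f$ (resp.\,$u=(\mu+\Lambda_q(a,\nabla a+b))^{-1}f$).

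Uniqueness of the limit ensures the whole sequence converges weakly. Strong $L^q$-convergence for $f\in C_c^\infty$ then follows by combining this weak convergence, the $L^q_\loc$-strong convergence from Rellich, and the $L^\infty$ and $L^1$-contractivity of the associated semigroups (the latter providing tightness of $|u_n|^q$ at infinity); the extension to general $f\in L^q$ is immediate by density and the uniform resolvent bound. The main technical obstacle is precisely the passage to the limit in the critical-order first-order terms $\langle b_n\cdot\nabla u_n,\varphi\rangle$ and $\langle \nabla a_n\cdot\nabla u_n,\varphi\rangle$, where both factors have only $L^2_\loc$ integrability; the rescue is that the \emph{mollified} coefficients converge strongly in $L^2_\loc$, which paired with weak $L^2_\loc$-convergence of $\nabla u_n$ suffices to identify the limit distributionally. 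A secondary subtlety is that $D(\Lambda_q(a,b))$ is only defined abstractly, so the identification step relies on the uniqueness embedded in the Miyadera-type extension scheme of \cite[Thm.\,3.2]{KiS} (together with Theorem \ref{thm:A1} in the $c<0$ non-divergence case).
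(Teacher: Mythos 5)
Your compactness-and-identification scheme has two genuine gaps, both at the places where the paper's proof does its actual work. The first is the identification step. You pass to a weak limit $u$ solving the distributional equation and then invoke ``the uniqueness built into the abstract construction'' to conclude $u=(\mu+\Lambda_q(a,b))^{-1}f$. But no such uniqueness statement is available: $\Lambda_q(a,b)$ is \emph{defined} in \cite[Theorem 3.2]{KiS} as the limit of the approximating operators $\Lambda_q(a,b_n)$, and for form-bounded $b$ with critical-order singularities there is no theorem asserting that a distributional solution in $L^q$ with $\nabla u\in L^q\cap L^{qd/(d-2)}$ must coincide with the resolvent of that particular realization. (Uniqueness of weak solutions is precisely what fails, or is unknown, in this singular setting --- that is why the operator is constructed by approximation in the first place.) The paper avoids this entirely: it first uses the convergence $(\mu+\Lambda_q(a,\tilde b_n))^{-1}\to(\mu+\Lambda_q(a,\tilde b))^{-1}$, which is part of the construction in \cite[Theorem 3.2]{KiS} (with $a$ fixed and only $b$ approximated), and then reduces the theorem to the quantitative estimate $\|(\mu+\Lambda_q(a,\tilde b_n))^{-1}f-(\mu+\Lambda_q(a_n,\tilde b_n))^{-1}f\|_q\to 0$, obtained by subtracting the two equations for the \emph{same} $\tilde b_n$ and controlling the commutator term $Z=\langle(a-a_n)\cdot\nabla\tilde u_n,\nabla(\zeta v|v|^{q-2})\rangle$ via the uniform $W^{1,qd/(d-2)}$ bounds and $|a_{ij}-a^{\varepsilon_n}_{ij}|\leq c\,\varepsilon_n|x|^{-2}_{\varepsilon_n}\downarrow 0$.

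The second gap is the upgrade from weak convergence plus $L^q_{\rm loc}$ convergence to strong $L^q$ convergence. You attribute the required tightness of $|u_n|^q$ at infinity to the $L^1$ and $L^\infty$ contractivity of the semigroups, but uniform bounds $\sup_n\|u_n\|_1+\sup_n\|u_n\|_\infty<\infty$ do not prevent an $\varepsilon$ of mass from escaping to infinity; one needs $\sup_n\int_{|x|>R}|u_n|^q\to 0$ as $R\to\infty$, which does not follow from contractivity alone. This is exactly what step (a) of the paper's proof establishes, by testing the equation against $\zeta u_n^{q-1}$ with a weight $\zeta$ vanishing on a large ball (the construction of $\zeta$ borrowed from \cite[proof of Theorem 3.2, step 1]{KiS}). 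Your passage to the limit in the terms $\langle b_n\cdot\nabla u_n,\varphi\rangle$ and $\langle\nabla a_n\cdot\nabla u_n,\varphi\rangle$ is fine, but without repairing the identification and tightness steps the argument does not close.
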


\begin{proof}
We modify the proof of Theorem \ref{thm:A1} but will work with resolvents instead of semigroups.

To prove (\textit{i}), set $\delta_*:=[1 \vee (1+c)^{-2}]\delta$, $\tilde{b}:=b$, $\tilde{b}_n:=b_n$.

To prove (\textit{ii}), set
$$
\sqrt{\delta_*}:=\left\{
\begin{array}{ll}
\sqrt{\delta}+2 \frac{d-1}{d-2}\frac{c}{1+c}, & c>0, \\
(1+c)^{-1}\sqrt{\delta},  & -1<c<0.
\end{array}
\right.
$$
and $\tilde{b}:=b + \nabla a$, $\tilde{b}_n:=b_n + \nabla a_n$.

\smallskip

\textbf{1.~}First, prove (\textit{i}) and (\textit{ii}) for $c>0$. Set $A^n \equiv [-\nabla \cdot a_n \cdot \nabla\upharpoonright C_c^\infty]^{\rm clos}_{2 \to 2}.$ Then $\tilde{b} \in \mathbf{F}_{\delta_*}(A)$, $\tilde{b}_n \in \mathbf{F}_{\delta_*}(A^n)$ (for details see the proofs of Theorems \ref{thm2} and \ref{thm4}, respectively), where, by our assumptions, $\delta_*<4$.
Therefore, by Theorem \ref{thm:A1}, $(\mu+\Lambda_q(a,\tilde{b}))^{-1}$, $(\mu+\Lambda_q(a_n,\tilde{b}_n))^{-1}$, $q>\frac{2}{2-\sqrt{\delta_*}}$, $\mu>\omega_q$, are well defined, and 
$
\lim_n\|(\mu+\Lambda_q(a,\tilde{b}))^{-1}f - (\mu+\Lambda_q(a,\tilde{b}_n))^{-1}f\|_q = 0$, $f \in L^q$.
Thus, it suffices to show that 
$$
\lim_n\|(\mu+\Lambda_q(a,\tilde{b}_n))^{-1}f - (\mu+\Lambda_q(a_n,\tilde{b}_n))^{-1}f\|_q =0.
$$

(a)~Fix $f \in L^\infty \cap L^2_+$. Set $u_n:=(\mu+\Lambda_q(a,\tilde{b}_n))^{-1}f \geq 0$, $\tilde{u}_{n}:=(\mu+\Lambda_q(a_n,\tilde{b}_n))^{-1}f \geq 0$. 
Let $v := \zeta u_n \geq 0$, where $\zeta=\zeta(R)$, $R>0$, is defined by \eqref{zeta1}. Note that $\langle (\mu+ \Lambda_q(a,\tilde{b}_n))u_n,\zeta v^{q-1}\rangle = \langle  (\mu + A + \tilde{b}_n \cdot\nabla ) u_n, \zeta v^{q-1} \rangle $ according to step 2 in the proof of Theorem \ref{thm:A1}, and hence
\[
\langle \zeta (\mu + A + \tilde{b}_n \cdot\nabla ) u_n, v^{q-1} \rangle =\langle  \zeta f, v^{q-1} \rangle.
\]
Now, proceeding as in step 2 of the proof of Theorem \ref{thm:A1}, we arrive at the following.
For every $\varepsilon>0$ there exists $R>0$ such that
\begin{equation*}
\|\zeta u_n\|_q \leq \varepsilon,  \quad n \geq 1, \quad \mu>\omega_q.
\end{equation*}
Similarly,
\begin{equation*}
\|\zeta \tilde{u}_n\|_q \leq \varepsilon \quad n \geq 1, \quad \mu>\omega_q.
\end{equation*}

(b)\;Set $g_n := u_n - \tilde{u}_{n}$. For the $R$ determined above, 
set
$v := \zeta g_n$, where $\zeta=\zeta(R)$ is defined by \eqref{zeta2}. Subtracting the equations for $u_n$ and $\tilde{u}_n$, we have
$$
\big \langle \big (\mu + A +\tilde{b}_n \cdot \nabla \big ) g_n - \nabla \cdot (a-a_n) \cdot \nabla \tilde{u}_n, \zeta v |v|^{q-2} \big \rangle = 0.
$$
Arguing as in step 3 of the proof of Theorem \ref{thm:A1}, we arrive at the inequality
\begin{align*}
\mu \|v\|_q^q & \leq M(q, d, \delta_*) R^{-\gamma} \|f\|_\infty^q 
 +   |\langle \zeta \nabla \cdot (a-a_n) \cdot \nabla \tilde{u}_n,v|v|^{q-2}\rangle|.
\end{align*}
To show that $\zeta(u_n - \tilde{u}_{n}) \rightarrow 0$ strongly in $L^q$ as $n \rightarrow 0$, it remains to prove that $\lim_n |Z|=0$, where $Z:=\langle (a - a_n) \cdot \nabla \tilde{u}_{n}, \nabla(\zeta v |v|^{q-2}) \rangle$. The latter is possible due to the bounds $\|\nabla u_n\|_{\frac{qd}{d-2}} \leq K\|f\|_q$, $\|\nabla \tilde{u}_n\|_{\frac{qd}{d-2}} \leq K\|f\|_q$ (cf.\,the proof of Theorem \ref{thm2} (steps 1-3) for (\textit{i}), the proof of Theorem \ref{thm4} (steps 1-3) for (\textit{ii})). Indeed,
\begin{align*}
Z&=
q\langle \zeta^{q-1}\nabla \zeta\cdot(a-a_n)\cdot\nabla \tilde{u}_n,g_n|g_n|^{q-2}\rangle+ \langle \nabla g_n\cdot(a-a_n)\cdot\nabla \tilde{u}_n,\zeta^q|g_n|^{q-2}\rangle \\
&+(q-3)\langle \nabla |g_n|\cdot(a-a_n)\cdot\nabla \tilde{u}_n,\zeta^q g_n|g_n|^{q-3}\rangle \\
& \equiv qZ_1 + Z_2 +(q-3)Z_3,
\end{align*}
and ($q_*:=\frac{qd}{d-2}>2$)
\begin{align*}
|Z_1|\leq & q \|\nabla \zeta\cdot (a-a_n)\|_{q_*^\prime} \|\nabla \tilde{u}_n\|_{q_*} \|g_n\|_\infty^{q-1},\\
|Z_2|\leq &\|\nabla g_n \cdot \zeta(a-a_n)\cdot\nabla \tilde{u}_n\|_1 \|g_n\|_\infty^{q-2},\\
|Z_3|\leq &\|\nabla |g_n| \cdot \zeta(a-a_n)\cdot\nabla \tilde{u}_n\|_1 \|g_n\|_\infty^{q-2},\\
\|\nabla |g_n|\|_{q_*} \leq & \|\nabla g_n\|_{q_*} \leq 2K\|f\|_q, \;\; \sup_{i,j}\|\zeta(a_{ij}-a_{ij}^{\varepsilon_n})\|_\frac{q_*}{q_*-2} \rightarrow 0 \text{ as } n\to 0.
\end{align*}
Thus, by H\"older's inequality and $|a_{ij}-a_{ij}^\varepsilon|\leq c  \varepsilon |x|_\varepsilon^{-2} \downarrow 0$ a.e.\,as $n \rightarrow \infty$,
\[
\lim_n |Z|= 0.
\]
It follows that $\zeta(u_n - \tilde{u}_{n}) \rightarrow 0$ in $L^q$.

Combining the results of (a) and (b), we obtain the required.

\smallskip

\textbf{2.~}To prove (\textit{ii}) with $c<0$, we repeat the proof above but taking into account the bound \eqref{pos} in the proof of Theorem \ref{thm:A1}.
\end{proof}

\end{document}